\newcommand\nocell[1]{\multicolumn{#1}{c|}{}}
\renewcommand*\env@matrix[1][\arraystretch]{%
  \edef\arraystretch{#1}%
  \hskip -\arraycolsep
  \let\@ifnextchar\new@ifnextchar
  \array{*\c@MaxMatrixCols c}}
\numberwithin{equation}{subsection}
\newcommand\Tstrut{\rule{0pt}{3.5ex}}       
\newcommand\Bstrut{\rule[-1.4ex]{0pt}{0pt}} 
\newcommand\TBstrut{\Tstrut\Bstrut}         
\newcommand\TTstrut{\rule{0pt}{6ex}}       
\newtheorem{theorem}{Theorem}[subsection]
\newtheorem{thm}{Theorem}
\theoremstyle{remark}
\newtheorem{definition}[theorem]{Definition}
\theoremstyle{theorem}
\newtheorem{proposition}[theorem]{Proposition}
\newtheorem{corollary}[theorem]{Corollary}
\theoremstyle{remark}
\newtheorem{remark}[theorem]{Remark}
\theoremstyle{remark}
\newtheorem{example}[theorem]{Example}
\newtheorem*{notation}{Notation}
\title{K3 surfaces with a symplectic automorphism of order 4}
\author{Benedetta Piroddi}
\begin{document}
\maketitle
\begin{abstract}
Given $X$ a K3 surface admitting a symplectic automorphism $\tau$ of order 4, we describe the isometry $\tau^*$ on $H^2(X,\mathbb Z)$. Having called $\tilde Z$ and $\tilde Y$ respectively the minimal resolutions of the quotient surfaces $Z=X/\tau^2$ and $Y=X/\tau$, we also describe the maps induced in cohomology by the rational quotient maps $X\rightarrow\tilde Z,\ X\rightarrow\tilde Y$ and $\tilde Y\rightarrow\tilde Z$: with this knowledge, we are able to give a lattice-theoretic characterization of $\tilde Z$, and find the relation between the Néron-Severi lattices of $X,\tilde Z$ and $\tilde Y$ in the projective case. We also produce three different projective models for $X,\tilde Z$ and $\tilde Y$, each associated to a different polarization of degree 4 on $X$.
\end{abstract}

\section*{Introduction} 
An automorphism $\alpha$ of a K3 surface $S$ is symplectic if it preserves its volume form: therefore, the surface $T$ that is the minimal resolution of the singularities of the quotient $S/\alpha$ is again a K3 surface.
Nikulin characterized the K3 surfaces $S$ that admit a symplectic automorphism of order $n$ 
by the existence of a primitive embedding of a certain lattice $\Omega_n$ in $NS(S)$, and K3 surfaces $T$
by the existence of a primitive embedding of another lattice $M_n$ in $NS(T)$ (\cite{Nikulin2}). The first explicit description of the map that relates the lattices $H^2(S,\mathbb Z)$ and $H^2(T,\mathbb Z)$ was given by Morrison (\cite{Morrison}) for a symplectic involution, and subsequent works by Van Geemen, Garbagnati and Sarti (\cite{VGS}, \cite{GS}) produced a complete description of the correspondence between families of projective K3 surfaces that admit a symplectic involution, and those that arise as desingularization of their quotient. A similar approach was used by Garbagnati and Prieto (\cite{GP})
for symplectic automorphisms of order 3, and will be used in this work too.\\
After an exposition of the necessary results of lattice theory in Section 1, in Section 2 we select a K3 surface with a large Néron-Severi lattice and a symplectic automorphism $\tau$ of order 4, from which we deduce in Section 3 the action of the isometry $\tau^*$ on the lattice $H^2(X,\mathbb Z)$ for any K3 surface $X$ (by \cite{Nikulin2}, Thm. 4.7). Section 4 is devoted to the description of the maps induced in cohomology by the rational maps $\xymatrix{\pi_2: X \ar@{-->}[r] &\tilde Z}$, $\xymatrix{\pi_4: X \ar@{-->}[r] &\tilde Y}$, $\xymatrix{\widehat{\pi_2}:\tilde Z \ar@{-->}[r] &\tilde Y}$, where $\tilde Z$ and $\tilde Y$ are the minimal resolutions of the quotient surfaces $Z=X/\tau^2$ and $Y=X/\tau$ respectively. In Section 5, a lattice-theoretic approach is used to determine the families of projective K3 surfaces $X$ with an action of $\tau$: the Néron-Severi of $X$ has in fact to admit a primitive embedding of both the lattice $\Omega_4$, as described by Nikulin, and the lattice of rank one $\langle 2d\rangle$ for some positive integer $d$, generated by an ample class $L$; depending on the choice of $d$, the lattices of minimal rank $rk(\Omega_n)+1$ admissible as Néron-Severi of $X$ can be one or more, giving rise to many lattice-polarized families; the correspondence between families of $X$, $\tilde Z$ and $\tilde Y$ is given via the maps introduced in the previous section. In Section 6 we describe the action induced by $\tau$ on the projective space $\mathbb P(H^0(X,L)^*)$ for each of the existing families of $X$, and as an example we find the explicit projective models of $X$, $\tilde Z$ and $\tilde Y$ associated to the three different families with $L^2=4$.

The main results presented in this work are the following:
\begin{thm}[see Thm. \ref{Znonproj}]
A K3 surface $\tilde Z$ is the minimal resolution of $X/\tau^2$, for some K3 surface $X$ with a symplectic automorphism of order 4 $\tau$, if and only if $Z$ is $\Gamma$-polarized (the lattice $\Gamma$ is defined in Def. \ref{def:Gamma}).
\end{thm}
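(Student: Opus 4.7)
The plan is to prove the two implications separately, using the explicit cohomological description of the quotient maps developed in Section~4 in the forward direction, and the classical correspondence for symplectic involutions (after Morrison and Van Geemen--Sarti--Garbagnati) together with a lifting argument in the reverse direction.

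For the direction ($\Rightarrow$), starting from a K3 surface $X$ with its symplectic automorphism $\tau$ of order $4$, I would first analyse the fixed locus of the symplectic involution $\tau^2$ together with the induced $\tau$-action on it: the $8$ fixed points of $\tau^2$ split into the $4$ fixed points of $\tau$ plus two $\tau$-orbits of length $2$. Resolving the corresponding $8$ nodes of $Z$ yields $8$ disjoint $(-2)$-curves $E_1,\dots,E_8$ on $\tilde Z$, and the induced involution $\iota$ on $\tilde Z$ (the lift of the residual $\tau$-action on $Z$) fixes four of them setwise, each carrying two isolated fixed points, and permutes the remaining four in two pairs. I would then invoke the cohomological description of $\pi_2^{*}$ from Section~4 to identify which classes of $NS(\tilde Z)$ are forced to exist: half-sums of exceptional classes with prescribed $\iota$-behaviour (of Nikulin type), together with images of $\tau^2$-invariant classes on $X$ carrying the extra $\tau/\tau^2$ data. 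Checking that the classes obtained span a sublattice isometric to $\Gamma$ and that the embedding $\Gamma\hookrightarrow NS(\tilde Z)$ is primitive establishes that $\tilde Z$ is $\Gamma$-polarized.

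For the direction ($\Leftarrow$), given a K3 surface $\tilde Z$ with a primitive $\Gamma$-polarization, I would first use the $(-2)$-classes prescribed by $\Gamma$ to exhibit, after possibly acting by the Weyl group of $NS(\tilde Z)$, eight disjoint $(-2)$-curves with the combinatorial type dictated by $\Gamma$. The Nikulin-type rank-$8$ sublattice of $\Gamma$ then produces, via the Morrison--Van Geemen--Sarti--Garbagnati correspondence for symplectic involutions, a symplectic involution $\iota$ of $\tilde Z$ whose fixed-curve pattern on the eight exceptional classes matches the $4+2+2$ split built into $\Gamma$. Contracting the four $\iota$-swapped exceptional curves and taking the associated double cover branched over the resulting nodes yields a K3 surface $X$ with a symplectic involution $\sigma$ such that $\tilde Z$ is the resolution of $X/\sigma$. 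The crucial final step is to use the residual part of $\Gamma$ to lift $\iota$ to an order-$4$ symplectic automorphism $\tau$ of $X$ with $\tau^2=\sigma$, which amounts to finding an automorphism of the double cover descending to $\iota$ on $Z$.

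The main obstacle is precisely this last lifting step: a general K3 with a symplectic involution need not arise as $\widetilde{X/\tau^2}$ for some order-$4$ $\tau$, and the role of the extra data in $\Gamma$ beyond the order-$2$ Nikulin lattice is to enforce exactly the cohomological compatibility that makes the lift exist. Verifying that the factorisation of $\pi_4^{*}$ through $\pi_2^{*}$ and $\widehat{\pi_2}^{*}$ from Section~4 yields precisely this compatibility, and no strictly stronger one, is the technical heart of the argument.
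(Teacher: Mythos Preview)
Your forward direction is in the same spirit as the paper's: the paper simply says ``by construction'' and refers to the explicit computations of Section~\ref{sec:resolutionZ}, which is what your geometric analysis of the fixed locus plus the description of $\pi_{2*}$ would reproduce.

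In the reverse direction you take a genuinely different route from the paper, and your version has a real gap. You go \emph{upwards}: use $N\subset\Gamma$ to realise $\tilde Z$ as the resolution of a quotient $X/\sigma$ by a symplectic involution, and then try to lift the residual involution on $\tilde Z$ to an order-$4$ automorphism $\tau$ of $X$ squaring to $\sigma$. You correctly identify this lift as the crux, but you do not carry it out, and it is not automatic: any lift $\tilde\iota$ of $\iota$ to the double cover commutes with the covering involution $\sigma$, so $\tilde\iota$ and $\sigma\tilde\iota$ have the \emph{same} square; one cannot simply switch lifts to pass from order $2$ to order $4$. Establishing $\tilde\iota^2=\sigma$ requires an actual argument (e.g.\ a fixed-point count forcing $\tilde\iota$ to have exactly four fixed points, or a direct cohomological obstruction computation), and the proposal does not supply one. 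There is also some conflation in your sketch: the Nikulin lattice $N\subset\Gamma$ is what marks $\tilde Z$ as a \emph{quotient}, whereas the involution $\iota=\hat\tau$ acting on $\tilde Z$ comes from the primitive copy of $\Omega_2\simeq E_8(2)$ inside $\Gamma$ (see Remark~\ref{Omega2suZ}); these are two different rank-$8$ sublattices playing different roles.

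The paper avoids the lifting problem entirely by going \emph{downwards} instead. From $\Omega_2\subset\Gamma$ one obtains the symplectic involution $\hat\tau$ on $\tilde Z$; Corollary~\ref{MinquozienteZ} then shows that the minimal resolution of $\tilde Z/\hat\tau$ has N\'eron--Severi lattice equal to $M=M_{\mathbb Z/4\mathbb Z}$. Nikulin's characterisation of order-$4$ quotients now applies directly to this surface $\tilde Y$ and produces a pair $(X,\tau)$ with $\tau$ symplectic of order~$4$ and $\widetilde{X/\tau}\simeq\tilde Y$; the intermediate quotient $\widetilde{X/\tau^2}$ is then identified with $\tilde Z$ via the diagram~\eqref{diagrammaquozienti}. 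The payoff of this route is that it reduces the order-$4$ existence statement to the already-established order-$4$ result on $\tilde Y$, rather than having to manufacture $\tau$ by hand on $X$.
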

\begin{thm}[see Thm. \ref{relations},\ \ref{relationsZ}]
Let $X$ be a projective K3 surface with a symplectic automorphism of order 4 $\tau$, let $\tilde Z$ and $\tilde Y$ be respectively the minimal resolution of $X/\tau^2$ and $X/\tau$. Then, using the notation introduced in Section \ref{sec:proj}, we have the following correspondence between $NS(X),\ NS(\tilde Z)$ and $NS(\tilde Y)$:
\begin{longtable}{|c|c|c|c|}
\cline{2-4}
\nocell{1} &{$NS(X)$\Tstrut} 				                     &{$NS(\tilde Z)$ \Tstrut}  &{$NS(\tilde Y)$ \Tstrut}\\ [6pt]
\hline
$\forall d$\Tstrut	& $\Omega_4\oplus\langle 2d\rangle$\Tstrut  
		  	& $(\Gamma\oplus\langle 4d\rangle)' $ & $(M\oplus\langle 8d\rangle)^{\star}$ \\ [6pt]
\hline
\multirow{2}{*}{$d=_4 2$\Tstrut} & $(\Omega_4\oplus\langle 2d\rangle)'^{(1)}$\Tstrut 
& \multirow{2}{*}{$(\Gamma\oplus\langle 4d\rangle)^\star$\Tstrut} & $(M\oplus\langle 2d\rangle)'^{(1)}$ \\[6pt]
\ & $(\Omega_4\oplus\langle 2d\rangle)'^{(2)}$\Tstrut &\   &$(M\oplus\langle 2d\rangle)'^{(2)} $  \\ [6pt]
\hline
$d=_4 3$\Tstrut & $(\Omega_4\oplus\langle 2d\rangle)'$\Tstrut  
& $(\Gamma\oplus\langle 4d\rangle)^\star $  & $(M\oplus\langle 2d\rangle)'$ \\ [6pt]
\hline
\multirow{2}{*}{$d=_4 0$\Tstrut} & $(\Omega_4\oplus\langle 2d\rangle)'$\Tstrut 
& \multirow{2}{*}{$(\Gamma\oplus\langle d\rangle)' $\Tstrut}  &$(M\oplus\langle 2d\rangle)'$ \\[6pt]
\ & $(\Omega_4\oplus\langle 2d\rangle)^{\star}$\Tstrut  &\ &$M\oplus\langle d/2\rangle $   \\ [6pt]
\hline
\end{longtable}
\end{thm}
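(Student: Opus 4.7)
The plan is to begin with the primitive embedding $\Omega_4\hookrightarrow NS(X)$ guaranteed by Nikulin together with an ample class $L\in NS(X)$ of square $2d$, and enumerate the possibilities for $NS(X)$ of rank $\mathrm{rk}(\Omega_4)+1$. First I would argue that $L$ may be taken $\tau$-invariant: averaging $L+\tau^*L+(\tau^*)^2L+(\tau^*)^3L$ produces a $\tau$-invariant ample class (the ample cone is $\tau^*$-stable), and by the explicit description of $\tau^*$ on $H^2(X,\mathbb Z)$ obtained in Section~3 the lattice $\Omega_4$ sits inside the $\tau^*$-coinvariant sublattice and is therefore orthogonal to any $\tau^*$-fixed class. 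Hence $L\perp\Omega_4$, and $\Omega_4\oplus\langle 2d\rangle$ is an orthogonal sublattice of $NS(X)$ of finite index. The problem then reduces to classifying the primitive overlattices of $\Omega_4\oplus\langle 2d\rangle$ of the same rank in which $\Omega_4$ remains primitively embedded.

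This classification is a Nikulin-style discriminant-form computation, split into cases by $d\bmod 4$: the admissible overlattices correspond to isotropic subgroups of $q_{\Omega_4}\oplus q_{\langle 2d\rangle}$ that intersect $\Omega_4^\vee/\Omega_4$ trivially, and since $q_{\Omega_4}$ is known from Section~3 the existence of such glue vectors reduces to a short list of quadratic congruences modulo~$4$. A systematic check reproduces the first column of the table: the minimal $\Omega_4\oplus\langle 2d\rangle$ for every $d$; two inequivalent overlattices $(\Omega_4\oplus\langle 2d\rangle)'^{(1)}$ and $(\Omega_4\oplus\langle 2d\rangle)'^{(2)}$ for $d\equiv 2\pmod 4$; one overlattice $(\Omega_4\oplus\langle 2d\rangle)'$ for $d\equiv 3\pmod 4$; and both $(\Omega_4\oplus\langle 2d\rangle)'$ and the larger $(\Omega_4\oplus\langle 2d\rangle)^{\star}$ for $d\equiv 0\pmod 4$.

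The remaining two columns are then filled in by transporting this data through the cohomology maps $\pi_2^*,\pi_4^*$ of Section~4. Since $\pi_2$ has degree~$2$ and $\pi_4$ has degree~$4$, a relation $\pi_2^*\hat L=kL$ with $k\in\{1,2\}$ forces $\hat L^2=k^2d$, and $\pi_4^*\ell=mL$ with $m\in\{1,2,4\}$ forces $\ell^2=m^2d/2$; the divisibility of $L$ inside $\pi_2^*NS(\tilde Z)$ and $\pi_4^*NS(\tilde Y)$ therefore records which overlattice appears on the respective side. Combining Theorem~\ref{Znonproj} for the primitive embedding $\Gamma\hookrightarrow NS(\tilde Z)$ with the corresponding result of \cite{Nikulin2} for $M\hookrightarrow NS(\tilde Y)$, the same discriminant-form enumeration applied to $\Gamma\oplus\langle 4d\rangle$ and to $M\oplus\langle 8d\rangle$ produces the remaining entries of each row, and the factorisation $\pi_4=\widehat{\pi_2}\circ\pi_2$ forces the three entries of a given row to be compatible.

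The main obstacle is the realisation step: each overlattice obtained abstractly from the discriminant analysis must actually arise as the Néron--Severi of a genuine triple $(X,\tilde Z,\tilde Y)$ related by the quotient construction, and the multiple overlattices occurring for $d\equiv 2,0\pmod 4$ must be matched correctly across the three columns. In particular, distinguishing $(M\oplus\langle 2d\rangle)'^{(1)}$ from $(M\oplus\langle 2d\rangle)'^{(2)}$ when $d\equiv 2\pmod 4$, and arguing that the larger overlattice $(\Omega_4\oplus\langle 2d\rangle)^{\star}$ at $d\equiv 0\pmod 4$ corresponds precisely to the case $L\in\pi_4^*NS(\tilde Y)$ responsible for the exceptional row $NS(\tilde Y)=M\oplus\langle d/2\rangle$ (as opposed to merely $2L\in\pi_4^*NS(\tilde Y)$), requires the explicit formulas for $\pi_2^*,\pi_4^*,\widehat{\pi_2}^*$ from Section~4 together with a careful comparison of gluing homomorphisms, which cannot be read off from abstract lattice invariants alone.
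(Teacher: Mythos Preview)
Your outline is essentially correct and matches the paper's strategy, but two points of comparison are worth noting.

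First, the paper works with the pushforward maps $\pi_{2*},\pi_{4*}$ rather than the pullbacks $\pi_2^*,\pi_4^*$: starting from explicit representative classes $L_0,L_{2,0},L_{2,2}^{(i)},L_{2,3},L_{4,j}$ constructed in Example~\ref{esempiL}, it computes $\pi_{4*}L$ and $\pi_{2*}L$ directly, extracts the primitive classes $\overline L=\pi_{4*}L/k$ and $\hat L$, and reads off the gluing to $M$ and to $\Gamma$ from the explicit formulas of Section~\ref{sec:maps}. Your pullback formulation is dual to this and would work, but the paper's pushforward route is more natural here because one already has $L$ on $X$ and wants to produce the corresponding ample class on the quotient, rather than guessing a class downstairs and checking that it pulls back correctly.

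Second, your phrase ``a systematic check'' for the discriminant-form enumeration understates one genuine input: the paper needs to know not just the classes of $A_{\Omega_4}$ by order and square (your ``quadratic congruences modulo~$4$''), but the orbits under the action of $O(\Omega_4)$, since two glue vectors in the same $(k,g)$-class may give non-isomorphic overlattices. Proposition~\ref{classi} establishes that the two equivalence relations coincide except at $(2,1)$, which splits into two $O(\Omega_4)$-orbits (this is exactly what produces the two lattices $(\Omega_4\oplus\langle 2d\rangle)'^{(1)}$ and $(\Omega_4\oplus\langle 2d\rangle)'^{(2)}$ for $d\equiv 2\pmod 4$), and this step is done by computer (SAGE). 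The analogous orbit computation for $A_M$ (Theorem~\ref{classiM}) and for $A_\Gamma$ is likewise needed and likewise non-obvious by hand. Your final paragraph correctly anticipates that the row-by-row matching cannot be done purely abstractly; the paper resolves it precisely by tracking the explicit representatives through $\pi_{2*}$ and $\pi_{4*}$.
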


\section{Lattices}\label{sec:lattices}
In this section, we're going to recall some fundamental results on lattices and discriminant forms; most of them are due to Nikulin, and are exposed in (\cite{Nikulin1}, \S 1).
\begin{definition}
An \emph{even lattice} is a free $\mathbb Z$-module $S$ of finite rank, equipped with a nondegenerate quadratic form $q: S\rightarrow 
2\mathbb Z$. Working in characteristic different than two, this is equivalent to giving an integral nondegenerate bilinear symmetric even form $b:S\rightarrow\mathbb Z$; we will refer to $b$ as \emph{intersection form} of $S$.\\
An \emph{isomorphism} between lattices (or \emph{isometry}) is an isomorphism of $\mathbb Z$-modules that preserves the intersection form. Denote $O(S)$ the group of isometries of $S$ into itself.
\end{definition}
\begin{definition}
Define the \textit{K3 Lattice} 
\[\Lambda_{\mathrm{K3}}\simeq E_8^{\oplus 2} \oplus U^{\oplus 3},\]
where $E_8$ is the unique even negative definite unimodular lattice of rank 8, and $U$ is the unique even indefinite unimodular lattice of rank 2; the K3 lattice is isometric to the second integral cohomology group $H^2(X,\mathbb{Z})$ equipped with the cup product ${H^2(X,\mathbb{Z})\times H^2(X,\mathbb{Z})\rightarrow H^4(X,\mathbb{Z})\simeq\mathbb Z}$, for any K3 surface $X$.
\end{definition}

\begin{definition}
Let $S$ be an even lattice: define the \emph{dual lattice} $S^*=\{\ x \in S\otimes \mathbb Q \ \mid\ \forall s \in S,\ b_{\mathbb Q}(x, s) \in\mathbb Z\ \}$
where $b_{\mathbb Q}$ denotes the $\mathbb Q$-linear extension of $b$.\\
Denote \emph{discriminant group} of $S$ $A_S\coloneqq S^*/S$, where $S\hookrightarrow S^*$ via $s\mapsto b(s, - )$. 
An invariant of the discriminant group is its \emph{length} $\ell$, that is defined as the minimum number of generators of $A_S$; we are going to write \[\lambda(S)=\lambda(A_S) =(n_1 , n_2 ,\dots , n_k)\] if the first of the generators in a set that satisfies the minimum has order $n_1$, the second $n_2$ and so on, with ${n_1\leq n_2\leq,\dots ,\leq n_k}$.\\
Define the \emph{discriminant (quadratic) form} $q_S: A_S\rightarrow\mathbb{Q}/2\mathbb{Z}$, induced on $A_S$ by the quadratic form $q$ of $S$. A subgroup $H\subset A_S$ is said to be \emph{isotropic} if it is annichilated by the discriminant form $q_S$.
\end{definition}

\subsection{Overlattices and primitive embeddings}

\begin{definition}
An \emph{embedding} of (even) lattices $(S,q)\hookrightarrow (M,\tilde q)$ is an injective homomorphism of $\mathbb{Z}$-modules such that $\tilde q|_S=q$. In this case, we say that $M$ is an \emph{overlattice} of $S$. An embedding is \emph{primitive} if $M/S$ is free; an overlattice is \emph{of finite index} if $M/S$ is a finite (abelian) group, and it is a \emph{cyclic overlattice} if $M/S$ is cyclic.
\end{definition}

\begin{theorem}[\cite{Nikulin1}, Prop. 1.4.1.a)]\label{Nik1}
Let $S$ be an even lattice, let $M$ be an overlattice of finite index of $S$, let $H_{M}=M/S$.
The correspondence $M\rightarrow H_{M}$ determines a bijection between overlattices of finite index of $S$ and isotropic subgroups of $A_S$.
\end{theorem}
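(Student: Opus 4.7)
The plan is to construct the bijection explicitly in both directions and verify the two constructions are mutually inverse, using that any finite-index overlattice of $S$ sits inside $S^*$.

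First I would observe that since $S$ is nondegenerate, the embedding $S\hookrightarrow S^*$ has finite cokernel $A_S$, and that any finite-index overlattice $M\supseteq S$ is automatically contained in $S^*$: an element $m\in M$ satisfies $b_{\mathbb{Q}}(m,s)=b_M(m,s)\in\mathbb{Z}$ for every $s\in S$, since the intersection form of $M$ restricts to that of $S$. Hence $H_M:=M/S$ is a well-defined subgroup of $A_S=S^*/S$. Since $M$ is even, $q(m)\in 2\mathbb{Z}$ for every $m\in M$, which is exactly the statement that $H_M$ is annihilated by the discriminant quadratic form $q_S\colon A_S\to\mathbb{Q}/2\mathbb{Z}$, so $H_M$ is isotropic.

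Conversely, given an isotropic subgroup $H\subseteq A_S$, I would set $M_H:=\pi^{-1}(H)\subseteq S^*$ for $\pi\colon S^*\to A_S$ the quotient map. By construction $M_H$ contains $S$ as a sublattice of finite index $|H|$. To see that $M_H$ is an even lattice, write every $m\in M_H$ as $m=\tilde h+s$ for some lift $\tilde h$ of an element of $H$ and some $s\in S$; then
\[ q(m)=q(\tilde h)+2b(\tilde h,s)+q(s)\in 2\mathbb{Z},\]
because $q(\tilde h)\in 2\mathbb{Z}$ by the $q_S$-isotropy of $H$, $b(\tilde h,s)\in\mathbb{Z}$ since $\tilde h\in S^*$, and $q(s)\in 2\mathbb{Z}$ by evenness of $S$. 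Evenness automatically gives integrality of the bilinear form via the polarization identity $2b(m,m')=q(m+m')-q(m)-q(m')$, so $M_H$ is indeed an even overlattice of $S$ of finite index.

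Finally I would verify the two assignments are mutually inverse: $H_{M_H}=M_H/S=H$ by construction, and $M_{H_M}=\pi^{-1}(M/S)=M$ because $M\subseteq S^*$. The main subtlety, and the step I would be most careful about, is the quadratic (rather than merely bilinear) nature of isotropy: the values of $q_S$ live in $\mathbb{Q}/2\mathbb{Z}$, and it is precisely this refinement that ensures $M_H$ is even rather than just integer-valued; with the target group taken as $\mathbb{Q}/\mathbb{Z}$ one would obtain integral, but possibly odd, overlattices and the correspondence would fail.
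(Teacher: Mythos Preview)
Your proof is correct and is the standard argument for this classical result. The paper itself does not prove this theorem: it is quoted as Proposition~1.4.1.a) from Nikulin's \cite{Nikulin1} and used as a black box, so there is no in-paper proof to compare against.

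A minor stylistic remark: in your verification that $M_H$ is even, the decomposition $m=\tilde h+s$ is unnecessary, since $m$ is itself a lift of the class $[m]\in H$ and isotropy of $H$ gives $q(m)\equiv 0\pmod{2\mathbb{Z}}$ directly. Your computation is of course still valid. Your closing observation about the target $\mathbb{Q}/2\mathbb{Z}$ versus $\mathbb{Q}/\mathbb{Z}$ is exactly the right point to flag, and is what makes the correspondence land on \emph{even} overlattices.
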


\begin{definition}
Two embeddings $S\hookrightarrow M$,  $S\hookrightarrow M'$ are \emph{isomorphic} if there is an isometry between $M$ and $M'$ that restricted to $S$ is the identity.\\
Two overlattices of finite index of $S$, $Q$ and $Q'$ are \emph{isomorphic} if there is an isometry $\alpha$ of $S$ into itself that extends $\mathbb Q$-linearly to an isometry $\overline\alpha$ between $Q$ and $Q'$.
\end{definition}

\begin{theorem}[\cite{Nikulin1}, Prop. 1.6.1] \label{thm:overlattices}
A primitive embedding of an even lattice $S$ into an even unimodular
lattice $L$, in which the orthogonal complement of $S$ is isomorphic to $K$, is determined by an isomorphism $\gamma:A_S\xrightarrow{\sim} A_K$ for which the discriminant quadratic forms satisfy $q_K\circ \gamma = -q_S$.
Two such isomorphisms $\gamma$ and $\gamma'$ determine isomorphic primitive embeddings if and only if they are conjugate via an automorphism of $K$.
\end{theorem}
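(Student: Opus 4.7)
The plan is to apply Theorem \ref{Nik1} to the finite-index inclusion $S\oplus K\subseteq L$ and translate the data of a primitive embedding into a combinatorial datum on the discriminant groups. Using $L=L^{*}$ (by unimodularity) we have $S\oplus K\subseteq L\subseteq S^{*}\oplus K^{*}$, and Theorem \ref{Nik1} attaches to $L$ an isotropic subgroup $H\coloneqq L/(S\oplus K)\subseteq A_S\oplus A_K$. The proof then consists in identifying precisely which isotropic subgroups $H$ arise this way.

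The crucial step is to show that such $H$ is always the graph of an isomorphism $\gamma\colon A_S\xrightarrow{\sim}A_K$. For this I would prove that the two projections $p_S\colon H\to A_S$ and $p_K\colon H\to A_K$ are both isomorphisms. Injectivity of $p_S$ follows from primitivity of $K$ in $L$: an element $(0,k^{*})\in H$ represents an element of $L$ lying in $K\otimes\mathbb Q$, hence by primitivity in $K$, so it vanishes in $A_K$. Surjectivity of $p_S$ uses that primitivity of $S$ in $L$ makes the restriction $L^{*}\to S^{*}$ surjective; combined with $L=L^{*}$, every $s^{*}\in S^{*}$ lifts to some $l\in L$ whose $S$-component is $s^{*}$, giving a preimage in $H$. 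Symmetrically $p_K$ is an isomorphism; in particular $|H|^{2}=|A_S|\cdot|A_K|$, which matches the order forced by $|A_L|=1$. Then $\gamma\coloneqq p_K\circ p_S^{-1}$ is an isomorphism, and the isotropy of $H$ with respect to $q_S\oplus q_K$ translates directly into $q_K\circ\gamma=-q_S$.

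For the converse, given $\gamma$ satisfying $q_K\circ\gamma=-q_S$, the graph $H_\gamma\subseteq A_S\oplus A_K$ is isotropic, so Theorem \ref{Nik1} produces an overlattice $L\supseteq S\oplus K$ with $|A_L|=|A_S|\cdot|A_K|/|H_\gamma|^{2}=1$; hence $L$ is unimodular. Primitivity of $S$ in $L$ follows because any $l\in L\cap(S\otimes\mathbb Q)$ defines a class $(s^{*},0)\in H_\gamma$, and the graph condition forces $s^{*}\in S$. For the uniqueness statement, two embeddings are isomorphic iff an isometry of $L$ restricts to the identity on $S$; such an isometry preserves $K=S^{\perp}$ and restricts to some $\varphi\in O(K)$, which sends $H_\gamma$ to $H_{\overline{\varphi}\circ\gamma}$. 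Conversely any $\varphi\in O(K)$ extends by the identity on $S$ to such an isometry of $L$, so $\gamma$ and $\gamma'$ determine isomorphic embeddings exactly when they lie in the same $O(K)$-orbit.

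The main obstacle is the surjectivity of the projections $p_S$ and $p_K$: this is where both hypotheses (primitivity of the embedding and unimodularity of $L$) must be combined, via the identification $L=L^{*}$ and the lifting of functionals from $S^{*}$ to $L^{*}$ guaranteed by primitivity. Once this step is established, the rest of the argument is essentially bookkeeping through Theorem \ref{Nik1}.
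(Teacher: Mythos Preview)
The paper does not give its own proof of this theorem: it is stated with a citation to \cite{Nikulin1}, Prop.~1.6.1, and used as a black box (the subsequent Corollary~\ref{corollario} is proved, but the theorem itself is not). There is therefore nothing in the paper to compare your argument against.

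That said, your sketch is the standard proof and is correct. The identification of $H=L/(S\oplus K)$ with the graph of an isomorphism $\gamma$ via the two projections, using primitivity of $S$ and $K$ in $L$ together with $L=L^{*}$ for injectivity and surjectivity, is exactly Nikulin's argument; the converse and the uniqueness statement are handled correctly as well.
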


\begin{corollary}\label{corollario}
Let $L$, $S$ and $K$ be as in Theorem \ref{thm:overlattices}. The isomorphism classes of overlattices $Q$ of $S\oplus K$ in $L$, such that $Q/(S\oplus K)$ is cyclic, are in bijection with the isometry classes for the action of $O(S)$  induced on $A_S$ (equivalently on $A_K$ via $\gamma$).
\end{corollary}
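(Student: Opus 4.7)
The plan is to apply Theorem~\ref{Nik1} to the sublattice $S\oplus K\subset L$, identify the overlattice $L$ itself with the graph of $\gamma$ inside $A_S\oplus A_K$, and then read off the isomorphism classes from the orbits of the induced $O(S)$-action on $A_S$.

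First I would invoke Theorem~\ref{Nik1} for $S\oplus K$: overlattices $Q$ of finite index of $S\oplus K$ correspond to isotropic subgroups of $A_{S\oplus K}=A_S\oplus A_K$ equipped with $q_S\oplus q_K$. Among them, the unimodular overlattice $L$ corresponds to a distinguished isotropic subgroup $H_L$, and the primitivity of $S\hookrightarrow L$ and $K\hookrightarrow L$ combined with Theorem~\ref{thm:overlattices} identifies $H_L$ with the graph $\{(x,\gamma(x)):x\in A_S\}$ of the gluing map $\gamma$; in particular the first projection induces an isomorphism $p_1:H_L\xrightarrow{\sim}A_S$.

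Next, the intermediate overlattices $S\oplus K\subseteq Q\subseteq L$ correspond to subgroups $H_Q\subseteq H_L$ (isotropy being automatic, as $H_L$ is already isotropic), and via $p_1$ these correspond to subgroups $H\subseteq A_S$. Cyclicity of $Q/(S\oplus K)=H_Q$ translates to cyclicity of $H$, producing the desired bijection before quotienting by isomorphism.

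To descend to isomorphism classes, I would spell out that an isomorphism $Q_H\simeq Q_{H'}$ is induced by an isometry $\alpha_S\oplus\alpha_K\in O(S)\oplus O(K)$ whose $\mathbb{Q}$-linear extension sends $Q_H$ onto $Q_{H'}$; at the discriminant level this reads $(\alpha_S^*\oplus\alpha_K^*)(H_{Q_H})=H_{Q_{H'}}$. Since both sides lie inside $H_L=\{(x,\gamma(x)):x\in A_S\}$, this single equation splits into $\alpha_S^*(H)=H'$ together with the compatibility $\alpha_K^*\circ\gamma=\gamma\circ\alpha_S^*$ on $H$. Once $\alpha_S\in O(S)$ has been chosen, $\alpha_K$ plays only the role of a witness to this compatibility, so the previous bijection descends to one between isomorphism classes of cyclic overlattices and $O(S)$-orbits on cyclic subgroups of $A_S$; conjugating by $\gamma$ gives the parallel description on $A_K$.

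The main obstacle is precisely this last step: one must check that for every $\alpha_S\in O(S)$ a compatible $\alpha_K\in O(K)$ exists, so that the $O(S)$-orbits are neither too fine nor too coarse. This amounts to matching, via $\gamma$, the images of $O(S)$ and $O(K)$ in $O(A_S)\simeq O(A_K)$, a point which relies on Nikulin's lifting techniques; because the compatibility is required only on the small cyclic subgroup $\gamma(H)\subset A_K$, the expected flexibility of $O(K)$ acting on $A_K$ should make the step go through.
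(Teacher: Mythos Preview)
Your approach is the same as the paper's: both identify the cyclic overlattices $S\oplus K\subset Q\subset L$ with cyclic subgroups of the graph of $\gamma$, hence with cyclic subgroups of $A_S$, and then pass to $O(S)$-orbits. You are in fact more precise than the paper on one point: you parametrise by cyclic \emph{subgroups} of $A_S$, whereas the paper (and its later applications, e.g.\ Definition~\ref{reldeq} and Proposition~\ref{classi}) speaks of orbits of \emph{elements}. The two coincide in the paper's applications because $A_{\Omega_4}$ and $A_M$ have exponent~$4$, so every cyclic subgroup has at most two generators, exchanged by $-\mathrm{id}$; in general your formulation is the correct one.

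The obstacle you isolate is real, and the paper's proof does not resolve it either. The paper simply asserts that the overlattice attached to $s'=\overline\alpha(s)$ is isomorphic to the one attached to $s$ ``thanks to the previous theorem'', but Theorem~\ref{thm:overlattices} concerns isomorphic primitive embeddings of $S$ into $L$, not isomorphic overlattices of $S\oplus K$; unwinding it, one needs exactly the lift of $\gamma\circ\overline\alpha\circ\gamma^{-1}\in O(A_K)$ to $O(K)$ that you ask for. The paper also does not address your ``too fine'' direction, namely that an isomorphism of overlattices is a priori given by an isometry of $S\oplus K$, not by one in $O(S)\times O(K)$. Both issues are harmless in the paper's applications (the complements $K$ that occur satisfy $O(K)\twoheadrightarrow O(q_K)$ by Proposition~\ref{condizione1}, and the non-isomorphism of the resulting overlattices is checked directly via their genera), but as a general statement the corollary is used only as a surjection from orbits onto isomorphism classes, which is all that your argument and the paper's actually establish.
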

\begin{proof}
Using the notation of the previous theorem, fix the isomorphism $\gamma: A_S\simeq A_K$; let $s\in A_S$ such that $q_S(s)=d\in \mathbb Q/\mathbb Z$, let $k=\gamma(s)$: then $q_K(k)=-d$, and the cyclic subgroup generated by $s+k$ is isotropic in $A_S\oplus A_K$, so it determines an overlattice of finite index $Q$ of $S\oplus K$ that is by construction a sublattice of $L$. Consider an isometry $\alpha\in O(S)$, denote $\overline\alpha(s)$ the induced isometry on $A_S$, and call $s'=\overline\alpha(s)$: then $q_S(s')=d$, and $\beta\coloneqq\gamma\circ\overline\alpha\circ\gamma^{-1}\in O(A_K)$, hence the subgroup generated by $s'+\beta(k)$ determines an overlattice of $S\oplus K$ isomorphic to $Q$ thanks to the previous theorem. \\
On the other hand, consider $Q$ such that $S\oplus K\hookrightarrow Q\hookrightarrow L$ and $Q/(S\oplus K)$ is cyclic: then $Q/(S\oplus K)$ is generated by an isotropic element in $A_{S\oplus K}=A_S\oplus A_K$, that is by construction of the form $s+k$, with $q_S(s)=d=-q_K(k)$.
\end{proof}

\begin{remark}\label{ino}
With $S,K,L$ as above, consider a primitive sublattice $H\subset K$, with $H=\langle h\rangle$, and suppose that $h/n\in K^*$ for some integer $n$: then, $L$ contains a cyclic overlattice $Q$ of $S\oplus K$, corresponding to the isotropic subgroup $\langle\gamma^{-1}(h/n)+h/n\rangle\subset A_{S\oplus K}$, as in Corollary \ref{corollario}. It contains also an overlattice $R$ of $S\oplus H$, $R\subseteq Q$, generated over $\mathbb Z$ by a $\mathbb Z$-basis of $S$ and the element $(s+h)/n$, where $s/n$ is a representative in $S^*$ of $\gamma^{-1}(h/n)\in A_S$.
Consider now an isometry $\psi\in O(K)$ such that $\tilde H=\psi(H)$ is isometric to $H$, $\tilde H=\langle\tilde h\rangle$: then $\tilde h/n=\psi(h)/n$ belongs to the same isometry class of $h/n$ in $A_K$, and $\langle\gamma^{-1}(\tilde h/n)+\tilde h/n\rangle$ is isotropic in $A_K$, so it defines an overlattice $\tilde R$ of $S\oplus\tilde H$ as above: the relation between $H$ and $\tilde R$ consists in the fact that $\tilde R$ is isomorphic to $R$ as a sublattice of $L$.
\end{remark}

\begin{theorem}[\cite{Nikulin1}, Prop. 1.14.1]\label{unicitaprimitivi}
For an even lattice $S$ of signature $(s_+,s_-)$ and discriminant form $q_S$, and an even unimodular indefinite lattice $L$ of signature $(l_+,l_-)$, all primitive embeddings of $S$ into $L$ are isomorphic if and only if the lattice $T$ with signature $(l_+-s_+,l_--s_-)$ and discriminant form $q_T=-q_S$ is unique and the homomorphism $O(T) \rightarrow O(q_T)$ is surjective.
\end{theorem}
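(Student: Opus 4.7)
The plan is to deduce the statement from Theorem~\ref{thm:overlattices} by isolating the two independent pieces of data that a primitive embedding $\phi:S\hookrightarrow L$ provides. First, the orthogonal complement $K=\phi(S)^\perp$ has signature $(l_+-s_+,l_--s_-)$, and since $L$ is unimodular, Theorem~\ref{thm:overlattices} furnishes a gluing isomorphism $\gamma:A_S\xrightarrow{\sim}A_K$ with $q_K\circ\gamma=-q_S$, so $K$ belongs to the genus of lattices sharing the invariants of $T$. Second, Theorem~\ref{thm:overlattices} tells us that $\phi$ is determined up to isomorphism by the pair $(K,\gamma)$, where $\gamma$ is taken only up to the action of $O(K)$ on $A_K$ via the natural map $O(K)\to O(q_K)$. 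I would analyse these two ingredients separately.

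For the forward implication, any isometry $\sigma\in O(L)$ relating two embeddings carries one orthogonal complement isometrically to the other, so if the genus of $T$ contained non-isometric members, two embeddings with complements in distinct classes could never be isomorphic: hence uniqueness of $T$ is necessary. Fixing a complement $K\cong T$, the set of valid gluings $\{\gamma:A_S\to A_K\mid q_K\circ\gamma=-q_S\}$ is a torsor over $O(q_K)=O(q_T)$ under post-composition (any two valid $\gamma,\gamma'$ differ by the element $\gamma'\circ\gamma^{-1}\in O(q_T)$), and $O(T)$ acts on this torsor through its image in $O(q_T)$; uniqueness of the $O(T)$-orbit is therefore equivalent to surjectivity of $O(T)\to O(q_T)$. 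For the converse, given both conditions one may arrange any two primitive embeddings to share the same complement $K\cong T$, and then lift the discrepancy $\gamma'\circ\gamma^{-1}\in O(q_T)$ to some $\psi\in O(T)$; this $\psi$ together with $\mathrm{id}_S$ yields the desired isometry of $L$.

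The main obstacle I anticipate is precisely this last step: translating between ``isomorphism of embeddings inside $L$'' (in the sense fixed at the start of Section~\ref{sec:lattices}) and ``$O(K)$-conjugacy of gluings'' as phrased in Theorem~\ref{thm:overlattices}. One has to check that $\psi\oplus\mathrm{id}_S$, a priori only an isometry of $S\oplus K$, preserves the isotropic subgroup $L/(S\oplus K)\subset A_S\oplus A_K$ corresponding to $\gamma$ and hence extends to an honest isometry of $L$; this is exactly guaranteed by the fact that the induced map $\bar\psi$ on $A_K$ equals $\gamma'\circ\gamma^{-1}$ in $O(q_K)$. Once this bookkeeping is unwound, the remainder of the argument is the torsor count described above and presents no further difficulty.
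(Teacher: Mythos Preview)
The paper does not supply a proof of this statement; it is quoted from \cite{Nikulin1} (Prop.~1.14.1) and used as a black box throughout Section~\ref{sec:proj}. Your argument is correct and is in fact the standard proof, essentially the one Nikulin gives in the cited reference: reduce via Theorem~\ref{thm:overlattices} to the pair (isometry class of $K$, $O(K)$-orbit of the gluing $\gamma$), and observe that the first datum collapses to a point iff $T$ is unique in its genus, while the second collapses iff the image of $O(T)\to O(q_T)$ is all of $O(q_T)$.

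The one ingredient you leave implicit is where the hypothesis that $L$ is \emph{indefinite} enters. In the forward direction you assert that any lattice $K$ in the genus of $T$ actually arises as the orthogonal complement of some primitive embedding $S\hookrightarrow L$; to justify this you build the overlattice of $S\oplus K$ along a gluing $\gamma$ and must identify the result with the given $L$. This identification holds precisely because an even unimodular indefinite lattice is determined by its signature. The same remark applies when you vary $\gamma$ over its $O(q_T)$-torsor to produce non-isomorphic embeddings in case surjectivity fails. Once this is made explicit, the argument is complete, and your final paragraph correctly handles the passage from $\psi\oplus\mathrm{id}_S$ on $S\oplus K$ to an isometry of $L$ by tracking the graph of $\gamma$ inside $A_S\oplus A_K$.
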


\begin{remark}\label{nonesiste}
Using the notation of the theorem, if $\ell(S)>rk(L)-rk(S)$, no primitive embedding of $S$ in $L$ exists: indeed, if it existed, then $T$ would satisfy $rk(T)<\ell(T)$, which is impossible.
\end{remark}

Instead of proving directly that the conditions of Theorem \ref{unicitaprimitivi} hold, we can rely on some sufficient conditions that are based only on the invariants of $S$ and $L$:

\begin{proposition}[\cite{MM}, Cor. VIII.7.8]\label{condizione1}
Let $T$ be an indefinite lattice such that $rk(T)\geq 3$. Write $A_T=\mathbb Z/d_1\mathbb Z\oplus,\dots ,\oplus\mathbb Z/d_r\mathbb Z$
with $d_i\geq 1$ and $d_i\mid d_{i+1}$. Suppose that one of the following holds:
\begin{enumerate}
\item $d_1=d_2=2$;
\item $d_1=2, d_2=4$ and $d_3=_8 4$;
\item $d_1=d_2=4$.
\end{enumerate}
Then $T$ is uniquely determined by its signature and discriminant form, and the map $O(T)\rightarrow O(q_T)$ is surjective.
\end{proposition}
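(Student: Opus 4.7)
The plan is to deduce the statement from Nikulin's local--global criterion (\cite{Nikulin1}, Thm.~1.14.2): both conclusions --- uniqueness of $T$ in its genus (equivalently, that the signature and discriminant form determine $T$ up to isometry) and surjectivity of $O(T)\to O(q_T)$ --- follow provided that, at every prime $p$, the $p$-adic completion $T_p = T\otimes\mathbb Z_p$ admits an orthogonal decomposition containing a summand of an appropriate ``large'' type (typically a rank-$2$ $p$-adic unimodular or scaled unimodular piece, or a rank $\geq 3$ unimodular summand). For odd primes such a summand exists automatically as soon as $\mathrm{rk}(T)>\ell(A_T\otimes\mathbb Z_p)$, which by Remark \ref{nonesiste}-type rank counting is immediate from the given data; and the three conditions on $(d_1,d_2,d_3)$ constrain only the $2$-primary part of $A_T$. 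The whole verification therefore localises at $p=2$.

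At $p=2$, I would proceed through the Jordan decomposition of $T_2$ case by case. In case (i), the condition $d_1=d_2=2$ forces the $2$-primary part of $A_T$ to contain $(\mathbb Z/2)^{2}$ as a direct summand of its discriminant group, which, tracing back, forces $T_2$ to split off one of the two even binary $2$-adic unimodular lattices (types $U$ or $V$ in standard notation): exactly the summand Nikulin's criterion demands. Cases (ii) and (iii) are similar in spirit but require careful separation of the $\mathbb Z/2$- and $\mathbb Z/4$-summands of $(A_T)_2$: the parity condition $d_3 \equiv 4 \pmod{8}$ in (ii) is exactly what is needed to rule out an obstructing rank-one ``odd'' Jordan constituent, while in (iii) the two copies of $\mathbb Z/4$ directly produce a rank-$2$ summand of $T_2$ of type $U(4)$ or $V(4)$.

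The main obstacle is the combinatorial bookkeeping of $2$-adic Jordan forms, which, in contrast with the odd-prime case, are not unique: one must distinguish ``even'' summands of types $U(2^k)$ and $V(2^k)$ from ``odd'' rank-one summands $\langle u\cdot 2^k\rangle$, since they contribute different terms to the discriminant quadratic form modulo $2\mathbb Z$ and behave differently with respect to orthogonal splittings. I would handle this by enumerating, in each of the three cases, all Jordan decompositions of the $2$-primary part of $A_T$ consistent with the prescribed invariant-factor shape $(d_1,d_2,\dots,d_r)$, and verifying case by case that the required summand is always present. Once this uniform local check is carried out, Nikulin's criterion delivers both the uniqueness of $T$ in its genus and the surjectivity of $O(T)\to O(q_T)$ simultaneously, which is precisely the conclusion of the proposition.
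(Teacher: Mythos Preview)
The paper does not give its own proof of this proposition: it is quoted verbatim as Corollary~VIII.7.8 of Miranda--Morrison \cite{MM} and used as a black box. So there is no in-paper argument to compare your proposal against.

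That said, your strategy is essentially the one underlying the Miranda--Morrison result. Their Chapter~VIII carries out precisely the programme you outline: reduce uniqueness-in-genus and surjectivity of $O(T)\to O(q_T)$ to local conditions \`a la Nikulin, dispose of odd primes by a rank--length count, and analyse the $2$-adic Jordan decomposition in detail. The three hypotheses (i)--(iii) are extracted exactly so that the $2$-adic form splits off a rank-$2$ even constituent of the appropriate scale. Your sketch captures this correctly, including the role of the parity condition $d_3\equiv 4\pmod 8$ in case~(ii).

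One caution: what you call ``Nikulin's criterion \cite{Nikulin1}, Thm.~1.14.2'' is in fact weaker than what you need. Nikulin's theorem as stated gives sufficient conditions of the shape $\mathrm{rk}(T)\ge \ell_p(A_T)+2$, together with a limited list of exceptional $2$-adic cases; it does not directly cover all three situations here (in particular, in case~(i) one may have $\mathrm{rk}(T)=\ell_2(A_T)$). Miranda--Morrison's refinement is what actually delivers the conclusion under hypotheses (i)--(iii), and their argument requires the full $2$-adic case analysis you allude to rather than a single appeal to \cite{Nikulin1}. If you want a self-contained proof you will need to reproduce that analysis, not merely invoke Nikulin.
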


\begin{theorem}[Nikulin, see \cite{Morrison}, Thm. 2.8]\label{condizione2}
Let $S$ be an even lattice of signature $(s_+,s_-)$ and discriminant form $q_S$, and $L$ an even unimodular lattice of signature $(l_+,l_-)$. Suppose that $s_+<l_+,\ s_-<l_-,\ \lambda(A_S)\leq rk(L)-rk(S)-2$.
Then there exists a unique primitive embedding of $S$ into $L$.
\end{theorem}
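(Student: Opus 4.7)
The plan is to reduce the statement to Theorem \ref{unicitaprimitivi}. That theorem tells us that primitive embeddings of $S$ into the unimodular lattice $L$ are in bijection (up to isomorphism) with isomorphisms $\gamma: A_S \xrightarrow{\sim} A_T$ satisfying $q_T \circ \gamma = -q_S$, modulo the action of $O(T)$, where $T$ is the orthogonal complement of $S$ in $L$; and that every primitive embedding is unique as soon as $T$ itself is unique in its genus (in the sense of being the unique even lattice with its signature and discriminant form) and the natural map $O(T)\to O(q_T)$ is surjective. So the strategy is to show that a candidate $T$ with signature $(l_+ - s_+, l_- - s_-)$ and discriminant form $-q_S$ exists, is unique, and has surjective $O(T)\to O(q_T)$.

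For existence of such a $T$, the hypotheses $s_+ < l_+$ and $s_- < l_-$ guarantee that the putative signature $(l_+ - s_+, l_- - s_-)$ has strictly positive entries in both slots, so $T$ would be indefinite. The inequality $\lambda(A_S) \leq rk(L)-rk(S)-2$ rewritten for $T$ says $\ell(A_T) \leq rk(T)-2 < rk(T)$, which is the rank bound required by Nikulin's existence theorem for even lattices with prescribed signature and discriminant form (\cite{Nikulin1}, Thm. 1.10.1 or its indefinite corollary); the sign conventions are automatically consistent because $-q_S$ has well-defined signature mod $8$ matching $(l_+ - s_+) - (l_- - s_-)$, using that $L$ is unimodular and $S\oplus T$ has index $1$ in $L$. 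Once a candidate $T$ is produced, the orthogonal complement of the embedded copy of $S$ must indeed have these invariants, so the construction is coherent.

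For uniqueness of $T$ and surjectivity of $O(T)\to O(q_T)$, I would apply Nikulin's theorem on indefinite even lattices (\cite{Nikulin1}, Thm. 1.14.2), which states that an indefinite even lattice whose rank exceeds the length of its discriminant group by at least $2$ is determined up to isometry by its signature and discriminant form, and for it the homomorphism to $O(q_T)$ is surjective. The hypothesis $\lambda(A_S)\leq rk(L)-rk(S)-2$ is precisely $\ell(A_T)+2\leq rk(T)$, so this applies verbatim to $T$.

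Assembling the pieces: a primitive embedding $S\hookrightarrow L$ exists because a suitable orthogonal complement $T$ exists and the unimodular overlattice $L$ of $S\oplus T$ is then recovered via the isotropic graph subgroup of $A_S\oplus A_T$ attached to $\gamma$ (Theorem \ref{Nik1}); and any two primitive embeddings differ by an automorphism of $T$ since all choices of $\gamma$ are conjugate under the surjective $O(T)\to O(q_T)$. The main technical obstacle is really the appeal to Nikulin's uniqueness/surjectivity theorem for indefinite lattices, which is the non-trivial content; everything else is bookkeeping with discriminant forms, and the hypotheses of the statement are exactly engineered so that both the existence bound $\ell(A_T)<rk(T)$ and the uniqueness bound $\ell(A_T)\leq rk(T)-2$ hold simultaneously.
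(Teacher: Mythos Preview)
The paper does not supply its own proof of this theorem: it is stated with attribution to Nikulin (via \cite{Morrison}, Thm.~2.8) and used as a black box. Your sketch is the standard derivation and is correct. You reduce to Theorem~\ref{unicitaprimitivi} by verifying its two hypotheses on the orthogonal complement $T$: existence of an even lattice with the required signature and discriminant form $-q_S$ (Nikulin's existence result, \cite{Nikulin1}, Cor.~1.10.2, whose rank and signature conditions are guaranteed by $s_\pm<l_\pm$ and $\ell(A_T)\le rk(T)-2$), and uniqueness of $T$ together with surjectivity of $O(T)\to O(q_T)$ (Nikulin's \cite{Nikulin1}, Thm.~1.14.2, which needs exactly $T$ indefinite and $\ell(A_T)\le rk(T)-2$). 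This is precisely the route taken in Morrison's exposition, so there is nothing to compare: your argument \emph{is} the intended one.

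One small comment: your remark that ``the hypotheses are engineered so that both the existence bound $\ell(A_T)<rk(T)$ and the uniqueness bound $\ell(A_T)\le rk(T)-2$ hold simultaneously'' is slightly redundant, since the second inequality already implies the first; the real content of the gap of $2$ is solely to trigger \cite{Nikulin1}, Thm.~1.14.2.
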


\begin{corollary}[\cite{Nikulin1}, Rem. 1.14.5]\label{condizione3}
If $A_S\simeq(\mathbb Z/2\mathbb Z)^3\oplus A'$, the conditions of the previous theorem are satisfied.
\end{corollary}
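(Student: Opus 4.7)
The claim to prove is that whenever the discriminant group has the form $A_S\simeq(\mathbb{Z}/2\mathbb{Z})^3\oplus A'$, the hypotheses needed for the uniqueness of the primitive embedding $S\hookrightarrow L$ are automatic. My plan is to reduce the question to Proposition \ref{condizione1}, case (1), applied to the orthogonal complement $T$ of $S$ in $L$.

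First I would set up the orthogonal complement. Under the standing assumptions $s_+<l_+$, $s_-<l_-$, the lattice $T$ has signature $(l_+-s_+,\,l_--s_-)$, so it is indefinite; by the general mechanism underlying Theorem \ref{unicitaprimitivi}, its discriminant form satisfies $q_T=-q_S$, so in particular $A_T\simeq A_S$ as abelian groups. Thus $A_T$ also has $(\mathbb{Z}/2\mathbb{Z})^3$ as a direct summand.

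Second, I would translate this summand into a statement about the invariant-factor decomposition of $A_T$: writing $A_T=\bigoplus_i\mathbb{Z}/d_i\mathbb{Z}$ with $d_1\mid d_2\mid\cdots\mid d_r$, the existence of three independent $2$-torsion elements forces $d_1=d_2=2$ (indeed $d_3=2$ as well, but only the first two are needed). This is exactly the hypothesis of Proposition \ref{condizione1}(1); that proposition, applied to $T$, yields simultaneously that $T$ is uniquely determined by its signature and discriminant form and that the natural map $O(T)\to O(q_T)$ is surjective. These are precisely the two conditions of Theorem \ref{unicitaprimitivi}, so that theorem then delivers the uniqueness of the primitive embedding $S\hookrightarrow L$.

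There is essentially no obstacle here; the only routine check is the hypothesis $\operatorname{rk}(T)\geq 3$ of Proposition \ref{condizione1}, which follows from $\operatorname{rk}(T)=\operatorname{rk}(L)-\operatorname{rk}(S)$ together with the rank bound carried over from Theorem \ref{condizione2} and the fact that $\lambda(A_S)\geq 3$. The key conceptual point — and the reason the corollary is noteworthy — is the recognition that the seemingly ad hoc hypothesis $A_S\simeq(\mathbb{Z}/2\mathbb{Z})^3\oplus A'$ is tailor-made to trigger case (1) of Proposition \ref{condizione1} on the orthogonal side; once this correspondence is spotted, everything else is formal.
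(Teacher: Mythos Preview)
The paper itself offers no proof here --- the corollary is simply cited from Nikulin --- so there is nothing to compare against directly. Your reduction to Proposition~\ref{condizione1}(1) is the natural idea, but the crucial step fails in general: a direct summand $(\mathbb{Z}/2\mathbb{Z})^3$ in $A_S$ does \emph{not} force $d_1=d_2=2$ in the invariant-factor decomposition $d_1\mid d_2\mid\cdots\mid d_r$. The three independent $2$-torsion generators only tell you that at least three of the $d_i$ are even; they say nothing about $d_1$, which is governed by whichever prime attains the maximal $p$-rank. For a concrete counterexample, take $A'=(\mathbb{Z}/3\mathbb{Z})^4$: then
\[
A_S\simeq(\mathbb{Z}/2\mathbb{Z})^3\oplus(\mathbb{Z}/3\mathbb{Z})^4\simeq\mathbb{Z}/3\mathbb{Z}\oplus(\mathbb{Z}/6\mathbb{Z})^3,
\]
so $d_1=3$ and none of the three cases of Proposition~\ref{condizione1} applies to the orthogonal complement.

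What the hypothesis genuinely controls is only the $2$-Sylow part of $A_S$: it guarantees that the first three $2$-adic invariant factors are exactly~$2$. This is precisely what Nikulin's Remark~1.14.5 exploits, because in his Theorem~1.14.4 the uniqueness criterion is stated prime by prime, and the $(\mathbb{Z}/2\mathbb{Z})^3$ summand disposes of the delicate local condition at $p=2$. The conditions at the odd primes must still be verified separately. In every use the paper makes of this corollary the odd part of the discriminant group is a single cyclic factor, so in those instances your argument does go through (the odd $p$-rank is at most~$1$, hence $d_1$ and $d_2$ really do equal~$2$); but as a proof of the corollary at the level of generality stated, the implication you invoke is false and the argument has a genuine gap.
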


\section{A symplectic automorphism $\tau$ of order 4 on the surface $X_4$}

\subsection{The surface $X_4$}\label{surface}

The surface $X_4$ is the unique K3 surface with transcendental lattice 
$T(X_4)=\begin{bmatrix}2 & 0 \\ 0 & 2 \end{bmatrix}$: it arises as desingularization of the quotient surface $A/\langle\sigma\rangle$, where $A$ is the abelian surface $E_i\times E_i$, $E_i$ is the elliptic curve of lattice $\langle 1,i\rangle$, and $\sigma$ is the automorphism of $A$ defined by $\sigma(e_1, e_2)=(ie_1, -ie_2)$; this surface is well known, see for instance \cite{ShiodaInose}, \cite{Vinberg}. \\
Define \textit{Jacobian fibration} a fibration $p:S\rightarrow\mathbb{P}^1$ whose generic fiber is a genus 1 curve, and that admits a global section $s$, denoted \textit{zero section}: the fiber over a generic point $F=p^{-1}(x)$ is an elliptic curve with the zero for the group law defined as $s(x)$. The \textit{Mordell-Weil group} $MW(p)$ of a Jacobian fibration is the group generated by all the sections, with the group law induced by that of the generic fiber: $MW(p)$ acts on $S$ by translation on each fiber, therefore if $S$ is a K3 surface it preserves its symplectic form.\\
Moreover, given a Jacobian fibration, the Mordell-Weil group  is linked to the Néron-Severi group of the surface by the following isomorphism (\cite{SchuttShioda}, Thm. 6.3):
\begin{equation}\label{eqn:MW}
MW(p)\simeq NS(S)/{\mathcal{T}(p)}
\end{equation}
where the \textit{trivial lattice} $\mathcal{T}(p)$ is the sublattice of $NS(S)$ generated by the generic fiber, the image of the zero section $s=s(\mathbb P^1)$ and the irreducible components of the reducible fibers which are not intersected by the curve $s$.

A description of  all the possible Jacobian fibrations on $X_4$ is provided by Nishiyama (\cite{Nishiyama}, Table 1.2): in particular there exists a fibration 
\[\pi: X_4\rightarrow\mathbb{P}^1 \quad \textrm{s.t.}\quad MW(\pi)\simeq\mathbb{Z}/{4\mathbb{Z}}\]
which provides a symplectic automorphism $\tau$ of order 4 on $X_4$ by means of a section $t_1$ that generates $MW(\pi)$.\\
The reducible fibers of $\pi$ are one of type $I_{4}$ and one of type $I_{16}$ (see \cite{Miranda}, Table IV.3.1). Call $B_0$ (respectively $C_0$) the component of $I_{4}$ (resp. $I_{16}$) intersected by $s$, and number the other components so that, for every $i\in\mathbb{Z}/4\mathbb{Z}, \ j\in\mathbb{Z}/16\mathbb{Z}$, $B_i$ intersects only $B_{(i+1)}$ and $B_{(i-1)}$, and $C_j$ intersects only $C_{(j+1)}$ and $C_{(j-1)}$.
Thus $\mathcal{T}(\pi)$ is generated by the class of the generic fiber $F$ of $\pi$, the curve $s$ and the components $B_i, C_j, \ i=1,2,3, \ j=1,\dots ,15$ of the reducible fibers: most of these curves are rational, so they have  self-intersection $-2$; the only exception is the class of the generic fiber $F$, which satisfies  $F^2=0$. The curves $B_1, B_2, B_3$ span the lattice $A_{3}$, and $C_1,\dots , C_{15}$ span the lattice $A_{15}$.\\
Using the height pairing formula (\cite{SchuttShioda}, \S 11.8), we can determine the components of the reducible fibers $B_i, C_j$ that have non-trivial intersection with a  non-zero section $t\in MW(\pi)$: in fact, since $t$ is a torsion section, its height is 0, so $i, j$ satisfy the  equation
\[0=h(t)=4-\big(\frac{i(4-i)}{4}+\frac{j(16-j)}{16}\big)\quad (height \ formula).\]
We will choose the following notation for the elements of $MW(\pi)$: the zero section $s$ intersects the components $B_0$ and $C_0$; the section $t_1$ intersects the components $B_2$ and $C_4$; the section $t_2$ intersects the components $B_0$ and $C_8$; the section $t_3$ intersects the components $B_2$ and $C_{12}$.
Notice that each of $t_1$ and $t_3$ generates $MW(\pi)$, whereas $t_2$ has order 2.\\
We can write $t_1,t_2,t_3$ in function of the basis of the trivial lattice $\mathcal{T}(\pi)$ using the information about their intersections:
\begin{align*}
t_1&=2F+s-\frac{B_1+2B_2+B_3}{2}-\frac{3C_1+6C_2+9C_3+\sum_{j=1}^{12} jC_{16-j}}{4}\\
t_2&=2F+s-\frac{\sum_{j=1}^7 j(C_j+C_{16-j})+8C_8}{2}\\
t_3&=2F+s-\frac{B_1+2B_2+B_3}{2}-\frac{\sum_{j=1}^{12} jC_{j}+9C_{13}+6C_{14}+3C_{15}}{4}.
\end{align*}
Since the discriminant group of $NS(X_4)$ is $(\mathbb Z/2\mathbb Z)^2$ (it is indeed the orthogonal complement to $T(X_4)$ in $\Lambda_{\mathrm{K3}}$), from \eqref{eqn:MW} and the equations above it can be readily seen that $NS(X_4)$ admits as a $\mathbb{Z}$-basis  $\mathcal{B}=\{F, s, t_1, B_1, B_2, B_3, C_1,\dots , C_{14}\}$.

\subsection{The action of $\tau^*$ on the second cohomology of $X_4$}

The symplectic automorphism $\tau$ induces an isometry $\tau^*$ on $NS(X_4)$ such that 
\begin{align*}
F\xmapsto{\tau^*}F \qquad  s\xmapsto{\tau^*}t_1\xmapsto{\tau^*}t_2\xmapsto{\tau^*}t_3\xmapsto{\tau^*}s\\
\tau^*(C_j)=C_{[j+4]_{16}}\qquad \tau^*(B_i)=B_{[i+2]_4}.
\end{align*}
where $[a]_{n}$ is the class of $a$ modulo $n$.
Therefore, we can easily identify two copies of $A_1$,  $\langle B_1\rangle$ and $\langle B_3\rangle$, exchanged by the action of $\tau^*$, and a set of four copies of $D_4$ on which $\tau^*$ acts as a cycle of order 4: $\{\langle s, C_{15}, C_0, C_1\rangle,\ \langle t_1, C_3, C_4, C_5\rangle,\ \langle t_2, C_7, C_8, C_9\rangle,$ $ \langle t_3, C_{11}, C_{12}, C_{13}\rangle\}$.
All these lattices are pairwise orthogonal, and the orthogonal complement in $NS(X)$ of the direct sum $D_4^{\oplus 4}\oplus A_1^{\oplus 2}$ is generated over $\mathbb Q$ by the vectors 
\begin{align*}
R_1=&-8F-4s+8t_1+4C_1+8C_2+13C_3+18C_4+15C_5+12C_6+10C_7+8C_8+ \\
&+6C_9+4C_{10}+3C_{11}+2C_{12}+C_{13}+3B_1+6B_2+3B_3, \\ 
R_2=&-4F-2s+2t_1+2C_1+4C_2+5C_3+6C_4+5C_5+4C_6+4C_7+4C_8+\\
&+4C_9+4C_{10}+3C_{11}+2C_{12}+C_{13}+B_1+2B_2+B_3, 
\end{align*}
whose intersection form satisfies $R_1^2=4,\ R_2^2=-4,\ R_1R_2=0$.
It can be also verified that $\tau^*R_1=R_1$, while $\tau^*R_2=-R_2$. Therefore, we have the following description:
\begin{proposition} Consider the sublattice
 $D_4^{\oplus 4}\oplus A_1^{\oplus 2}\oplus \langle 4 \rangle \oplus  \langle -4 \rangle$
of $NS(X_4)$ generated as above. The isometry $\tau^*$ acts on this sublattice as the cyclic permutation of order 4 on $D_4\oplus D_4\oplus D_4\oplus D_4$, as the cyclic permutation of order 2 on $A_1\oplus A_1$, as $id$ (the identity) on $ \langle4\rangle$ and as $-id$ on $ \langle-4\rangle$.
\end{proposition}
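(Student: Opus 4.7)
My plan is a direct verification, using as input the data collected in \S 2.1 and the start of \S 2.2: the $\mathbb Z$-basis $\mathcal B=\{F,s,t_1,B_1,B_2,B_3,C_1,\dots,C_{14}\}$ of $NS(X_4)$, the intersection form on the trivial lattice $\mathcal T(\pi)$ (determined by the shape of the fibres $I_4$, $I_{16}$ and the torsion-section incidences computed via the height formula), and the action $s\mapsto t_1\mapsto t_2\mapsto t_3\mapsto s$, $B_i\mapsto B_{[i+2]_4}$, $C_j\mapsto C_{[j+4]_{16}}$, $F\mapsto F$ of $\tau^*$ on $\mathcal B$. For the $D_4^{\oplus 4}\oplus A_1^{\oplus 2}$ summand I would just read off the intersection graphs: in each of the four quadruples $\{s,C_{15},C_0,C_1\}$, $\{t_1,C_3,C_4,C_5\}$, $\{t_2,C_7,C_8,C_9\}$, $\{t_3,C_{11},C_{12},C_{13}\}$ the central curve $C_{4k}$ meets each of the three other curves exactly once (the section $t_k$ because it meets $I_{16}$ only at $C_{4k}$, and $C_{4k\pm 1}$ by adjacency in the $16$-cycle), no other pair meets, and all four curves are $(-2)$-curves; this is the Dynkin diagram of $D_4$. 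The pair $\{B_1,B_3\}$ is a pair of non-adjacent components of $I_4$, giving $A_1\oplus A_1$. Pairwise orthogonality of the six blocks follows from the height-pairing calculation of \S 2.1 (giving $s\cdot t_i=t_i\cdot t_j=0$), from non-adjacency in the $16$-cycle of $C_j$'s belonging to different blocks, and from the fact that each torsion section meets $I_4$ only at $B_0$ or $B_2$; the stated $\tau^*$-action on this part is then just the restriction of the permutation on $\mathcal B$.

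For the $\langle 4\rangle\oplus\langle -4\rangle$ summand I would use that $rk(NS(X_4))=20$, so the orthogonal complement of the rank-$18$ sublattice above in $NS(X_4)\otimes\mathbb Q$ is two-dimensional and $\tau^*$-stable. Expanding $t_1,t_2,t_3$ in the basis $\mathcal B$ via the formulas at the end of \S 2.1, I would rewrite $R_1$ and $R_2$ purely in $\mathcal B$ and check directly that $R_i\cdot v=0$ for every generator $v$ of $D_4^{\oplus 4}\oplus A_1^{\oplus 2}$, and that $R_1^2=4$, $R_2^2=-4$, $R_1\cdot R_2=0$. The eigenvalue statement $\tau^*R_1=R_1$, $\tau^*R_2=-R_2$ then follows by applying $\tau^*$ coordinate by coordinate to the $\mathcal B$-expansions, using $\tau^*t_1=t_2$, $\tau^*t_2=t_3$, $\tau^*t_3=s$.

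The main obstacle is exclusively bookkeeping in the second step: the $\mathcal B$-expansions of $R_1$ and $R_2$ involve all twenty basis vectors with coefficients as large as $18$, so each of the orthogonality, intersection-number and eigenvalue checks is lengthy though conceptually routine. I would keep it tractable by exploiting the block-diagonal structure of the pairing on $\mathcal T(\pi)$, which reduces every intersection to a short sum of non-zero contributions; no input beyond \S 2.1 is needed.
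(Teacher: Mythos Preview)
Your proposal is correct and follows exactly the same line as the paper, which in fact does not give a separate proof after the proposition: the verification is the text immediately preceding it, where the four $D_4$ blocks and the two $A_1$'s are identified from the curve configuration, $R_1,R_2$ are written explicitly, and the intersection numbers and the relations $\tau^*R_1=R_1$, $\tau^*R_2=-R_2$ are simply asserted as directly checkable. One small imprecision: \S 2.1 uses the height formula only to pin down which fibre components the $t_i$ meet, not to compute $s\cdot t_i$ or $t_i\cdot t_j$; you get those either from the explicit expressions for $t_1,t_2,t_3$ given there, or from the standard fact that distinct torsion sections on an elliptic K3 are disjoint.
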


The lattice $D_4^{\oplus 4}\oplus A_1^{\oplus 2}\oplus \langle 4 \rangle \oplus  \langle -4 \rangle$ has discriminant group
\[(\mathbb{Z}/2\mathbb{Z}\times\mathbb{Z}/2\mathbb{Z})^4\times (\mathbb{Z}/2\mathbb{Z})^2\times\mathbb{Z}/4\mathbb{Z}\times\mathbb{Z}/4\mathbb{Z}\]
so it has index $2^6$ in $NS(X_4)$; the latter can be obtained by adding the following generators to the generators of $D_4^{\oplus 4}\oplus A_1^{\oplus 2}\oplus \langle 4 \rangle \oplus  \langle -4 \rangle$:
\begin{align}\label{generatoriNS}
R&\coloneqq(R_1+R_2)/2;\nonumber\\ 
a&\coloneqq R_1/4+R_2/4-(s+C_{15})/2-(t_1+C_5)/2=C_0+C_1+C_2+C_3+C_4;\nonumber\\ 
b&\coloneqq R_1/4-R_2/4-(t_1+C_3)/2-(t_2+C_9)/2=C_4+C_5+C_6+C_7+C_8;\nonumber\\ 
c&\coloneqq R_1/4+R_2/4-(t_2+C_7)/2-(t_3+C_{13})/2=C_8+C_9+C_{10}+C_{11}+C_{12};\\ 
d&\coloneqq R_1/4-R_2/4-(t_3+C_{11})/2-(s+C_1)/2=C_{12}+C_{13}+C_{14}+C_{15}+C_0;\nonumber\\ 
e&\coloneqq R_1/2-(C_3+C_5)/2-(C_{11}+C_{13})/2-B_1/2-B_3/2=t_1+t_3+C_4+C_{12}+B_2.\nonumber 
\end{align}

Now, $H^2(X_4,\mathbb{Z})$ is an overlattice of index 4 of the lattice $NS(X_4)\oplus T(X_4)$. 
Since $H^2(X_4,\mathbb{Z})$ is unimodular, following Theorem \ref{Nik1} we have to find a series of isotropic subgroups of $A_{NS(X_4)\oplus T(X_4)}$.\\
Denoting $\{\omega_1, \omega_2\}$ the $\mathbb{Z}$-basis of $T(X_4)$ for which the intersection matrix is $\begin{bmatrix}2 & 0 \\ 0 & 2 \end{bmatrix}$, the elements we have to add are:
\begin{gather}\label{generatoriH2}
f=(s+t_1+C_1+C_3+B_1+\omega_1)/2,\\
g=(s+t_1+C_1+C_3+B_3+\omega_2)/2.\nonumber
\end{gather}

\section{The action of $\tau^*$ and $(\tau^2)^*$ on the K3 lattice}\label{sec:azioni}

The main result of Nikulin's paper \cite{Nikulin2} is that there is a finite list of finite abelian groups which can act symplectically on K3 surfaces, and the action of each one of these groups on the K3 lattice is unique up to isometry (\cite{Nikulin2}, Thm. 4.7). This result enables us to deduce the action of any symplectic automorphism of order 4 (and of its square) on the second cohomology group of any K3 surface $S$ by looking at the surface $X_4$ with the action of the automorphism $\tau$ introduced in the previous section.

\subsection{A convenient description of the K3 lattice}\label{marking}

The isometry $\tau^*$ induced on $\Lambda_{\mathrm{K3}}$ by an automorphism $\tau$ of order 4 acts on the sublattice of finite index of $\Lambda_{\mathrm{K3}}\ W\coloneqq D_4^{\oplus 4}\oplus A_1^{\oplus 2}\oplus\langle -4\rangle\oplus\langle 4\rangle\oplus\langle 2\rangle\oplus\langle 2\rangle$ as the cycle $(1,2,3,4)$ on the four copies of $D_4$, as $(1,2)$ on the two copies of $A_1$, as $-id$ on ${\langle-4\rangle}$ and as $id$ on the remaining orthogonal components. 
The following diagram describes the situation: \\
\begin{equation}\label{eqn:sublattice}
\xymatrixcolsep{1pc}\xymatrix{
W\coloneqq &D_4 \ar@{}[r]|{\oplus} \ar@/^1pc/[r] &D_4 \ar@{}[r]|{\oplus}  \ar@/^1pc/[r] &D_4 \ar@{}[r]|{\oplus} \ar@/^1pc/[r] &D_4 \ar@{}[r]|{\oplus} \ar@/^1.2pc/[lll] &A_1 \ar@{}[r]|{\oplus}  \ar@/^1pc/[r]& A_1\ar@/^1pc/[l]\ar@{}[r]|(.45){\oplus} &{\langle-4\rangle} \ar@(ul,ur)^{-id}\ar@{}[r]|(.4){\oplus} &*[r]{\begin{bmatrix}
4 & 0 &  0 \\
0 & 2 &  0 \\
0 & 0 &  2
\end{bmatrix}} &\ \ar@(ur,dr)^{id} }\\
\end{equation}
Denote ${e_1,\dots , e_4,\ f_1,\dots , f_4,}$ ${g_1,\dots , g_4,\ h_1,\dots , h_4}$ the generators of the four copies of $D_4$, such that $e_3e_1=e_3e_2=e_3e_4=1$, and $\tau^*:e_i \mapsto f_i \mapsto g_i \mapsto h_i\mapsto e_i$ for $i=1,\dots , 4$; $a_1$ and $a_2$ the generators of the two copies of $A_1$;  $\sigma$ the generator of  $\langle-4\rangle$, $\rho$ the generator of  $\langle4\rangle$, $\omega_1$ and $\omega_2$ the generators of ${\begin{bmatrix}
 2 &  0 \\
0 &  2
\end{bmatrix}}$.
Then, the K3 lattice is obtained by adding to $W$ the elements (cf. \eqref{generatoriNS} and \eqref{generatoriH2})
\begin{align}\label{eqn:elements}
\chi&=(\rho+\sigma)/2; \nonumber\\
\alpha&=(\rho+\sigma)/4+(e_1+e_2+f_1+f_4)/2; \nonumber\\
\beta&=(\rho-\sigma)/4+(f_1+f_2+g_1+g_4)/2; \nonumber\\
\gamma&=(\rho+\sigma)/4+(g_1+g_2+h_1+h_4)/2; \nonumber\\
\delta&=(\rho-\sigma)/4+(h_1+h_2+e_1+e_4)/2; \\
\varepsilon&=(\rho+f_2+f_4+h_2+h_4+a_1+a_2)/2; \nonumber\\
\zeta&=(e_1+f_1+e_4+f_2+a_1+\omega_1)/2;\nonumber\\
\eta&=(e_1+f_1+e_4+f_2+a_2+\omega_2)/2 \nonumber;
\end{align}
the action of $\tau^*$ and $(\tau^2)^*$ on these elements is deduced by the one on the sublattice $W$ described above by $\mathbb Q$-linear extension: notice that ${\tau^*:\alpha \mapsto \beta \mapsto \gamma \mapsto \delta\mapsto \alpha}$, and that $(\tau^2)^*$ fixes $\varepsilon$ and $\chi$.

\subsection{Invariant and co-invariant lattices for the action of $\tau$ and $\tau^2$}\label{Omega4}

If $G$ is abelian and acts symplectically on a K3 surface, by uniqueness of the action of $G$ on $\Lambda_{\mathrm{K3}}$ we can define the \textit {invariant lattice} $\Lambda_{\mathrm{K3}}^G$ and the \textit{co-invariant lattice} $\Omega_G=(\Lambda_{\mathrm{K3}}^G)^{\perp_{\Lambda_{\mathrm{K3}}}}$. Moreover, the existence of a primitive embedding of $\Omega_G$ in the Néron-Severi lattice of a K3 surface is equivalent to the existence of a symplectic action of $G$ on that surface  (\cite{Nikulin2}, Thm. 4.15).
All the lattices $\Omega_G$ are known: see \cite{Morrison} and \cite{VGS} for $G=\mathbb Z/2\mathbb Z$, \cite{GS2} for $G=\mathbb Z/p\mathbb Z$, $p=3,5,7$, \cite{GS1} for the remaining cases; a list of all the invariant and co-invariant lattices for $G$ finite is provided in \cite{Hashimoto}.

From now on, denote $\Lambda_{\mathrm{K3}}^{\langle\tau\rangle}$ the invariant lattice, and $\Omega_4$ the co-invariant lattice for the action on $\Lambda_{\mathrm{K3}}$ induced by the automorphism of order 4 $\tau$. The lattice $\Omega_4$ has rank $14$ and discriminant $2^{10}$, as it was already computed by Nikulin (\cite{Nikulin2}, \S 10) by comparison with the lattice $M$ (see Section \ref{Y}).

The invariant elements for the action of $\tau^*$ on $W$ (see \eqref{eqn:sublattice}) are ${\kappa_i=e_i+f_i+g_i+h_i}$ for $i=1,\dots , 4,\ 
\kappa_5=a_1+a_2,\ \kappa_6=\rho,\ \kappa_7=\omega_1,\ \kappa_8=\omega_2$.
These elements span a sublattice of $\Lambda_{\mathrm{K3}}^{\langle\tau\rangle}$ of index $2^3$: to obtain $\Lambda_{\mathrm{K3}}^{\langle\tau\rangle}$, add the generators $(\kappa_2+\kappa_4)/2$ (that is, $\alpha+\beta+\gamma+\delta-\kappa_1-\kappa_6$), $(\kappa_5+\kappa_7+\kappa_8)/2$ and $(\kappa_1+\kappa_4+\kappa_6+\kappa_7+\kappa_8)/2$ (these are easily verified to be, in fact, integer elements in $\Lambda_{\mathrm{K3}}$).\\
Its orthogonal complement $\Omega_4$ is an overlattice of index $2^4$ of the lattice
 \[ \Delta=\left[ \begin{tabular}{ c | c | c | c }
  $D_4(2)$ & $D_4$ & $D_4$ & 0 \TBstrut\\
  \hline
  $D_4$ & $D_4(2)$ & $D_4$ & 0 \TBstrut\\
  \hline
  $D_4$ & $D_4$ & $D_4(2)$ & 0 \TBstrut\\
  \hline
  0 & 0 & 0 &$\begin{matrix}-4 & \enspace 0\\ \enspace 0&-4\end{matrix}$ \\
\end{tabular} \right]\]
spanned by the elements $e_i-f_i,\ e_i-g_i,\ e_i-h_i$ for $i=1,\dots , 4$, $a_1-a_2$ and $\sigma$: to obtain $\Omega_4$, we shall add to this lattice $\alpha-\beta,\ \alpha-\gamma,\ \alpha-\delta$ and one more class in $\frac{1}{2}\Delta$ integer in $\Lambda_{\mathrm{K3}}$, that we can choose to be $(e_2-g_2+e_4-g_4+a_1-a_2+\sigma)/2$.\\
The discriminant groups of $\Lambda_{\mathrm{K3}}^{\langle\tau\rangle}$ and ${\Omega_4}$ satisfy $A_{\Lambda_{\mathrm{K3}}^{\langle\tau\rangle}}\simeq(\mathbb{Z}/4\mathbb{Z})^4\times(\mathbb{Z}/2\mathbb{Z})^2\simeq A_{\Omega_4}.$

The sublattice of the K3 lattice $\Omega_2\coloneqq\Omega_{\mathbb{Z}/2\mathbb{Z}}$ co-invariant for an involution is isometric to the lattice $E_8(2)$ (see \cite{VGS} \S 1.3, \cite{Morrison} proof of Thm. 5.7). Considering the involution $\tau^2$, $\Omega_2$ is obviously contained in the co-invariant lattice for $\tau$, $\Omega_4$: in the basis of $\Omega_4$ described above, $\Omega_2$ is generated by the elements $\alpha-\gamma, \ \beta-\delta, \ e_1-g_1, \ e_3-g_3, \ f_1-h_1, \ f_2-h_2, \ f_3-h_3, \ f_4-h_4.$\\
Denote $R$ the orthogonal complement to $\Omega_2$ in $\Omega_4$: then $\Omega_4$ is an overlattice of $\Omega_2\oplus R$ such that $\Omega_4/(\Omega_2\oplus R)=(\mathbb Z/2\mathbb Z)^4$.

\section{Quotients}

For each of the abelian groups $G$ that can act symplectically on a K3 surface $X$, Nikulin provides (\cite{Nikulin2}, \S 5-7) a description of the singular locus of the quotient surface $X/G$, and of the \emph{exceptional lattice} $M_G$: this is the minimal primitive sublattice of $\Lambda_{\mathrm{K3}}$ containing all the exceptional curves of the minimal resolution $\widetilde{X/G}$ of $X/G$. Denoting $q: X\rightarrow X/G$ the quotient map, 
$H^2(\widetilde{X/G},\mathbb Z)$ is an overlattice of finite index of $q_*H^2(X,\mathbb Z)\oplus M_G$. 

\begin{remark}
The lattices $\Omega_G$ and $M_G$ are closely related via the quotient map: this fact allowed Nikulin to compute the rank and the discriminant group of $\Omega_G$ starting from the (simpler) exceptional lattice (see \cite{Nikulin2}, Lemma 10.2). Nikulin's results are correct only if $G$ is cyclic, and have been otherwise corrected by Garbagnati and Sarti in \cite{GS1}; for a complete account of the relation between $\Omega_G$ and $M_G$, see Whitcher's paper \cite{Whitcher}.
\end{remark}

Consider a K3 surface $X$ that admits a symplectic automorphism $\tau$ of order 4, and the (singular) quotient surfaces $Y=X/\tau, \ Z=X/\tau^2$; resolve the singularities of $Y$ and $Z$ to obtain the K3 surfaces $\tilde Y$, $\tilde Z$: then 
$\tau$ induces an involution $\hat\tau$ on $Z$ such that $Z/\hat\tau\simeq Y$, and this involution can be extended to $\tilde Z$, as we're going to show in the following sections. Denote the maps between these surfaces as in the following diagram:
\begin{equation}\label{diagrammaquozienti}
\xymatrix{
X \ar@{=}[rrrr] \ar[dd]_{q_4}\ar@{-->}[ddr]^{\pi_4} &\ &\ &\ &X \ar[d]^{q_2}\ar@{-->}[ld]_{\pi_2}\\
\ &\ &\ &\tilde Z\ar[d]^{\widehat{q_2}}\ar@{-->}[ld]_{\widehat{\pi_2}}\ar[r] &Z \ar[d]^{\overline{q}_2}\\
Y &\tilde Y \ar[l] \ar@{<->}[r]^{\simeq} &\widetilde{\tilde Z/\hat\tau} \ar[r] &\tilde Z/\hat\tau \ar[r] &Z/\hat\tau
}\end{equation}
\begin{remark}\label{YZisomorfi}
The surfaces $\tilde Y$ and $\widetilde{\tilde Z/\hat\tau}$ are isomorphic, because they are birationally equivalent K3 surfaces.
\end{remark}
We can describe the maps
\begin{align*}
\pi_{4*}&: \Lambda_{\mathrm{K3}}\simeq H^2(X, \mathbb Z)\xrightarrow{q_{4*}} q_{4*}H^2(X, \mathbb{Z}) \hookrightarrow H^2(\tilde Y, \mathbb{Z})\simeq \Lambda_{\mathrm{K3}} \\
\pi_{2*}&: \Lambda_{\mathrm{K3}}\simeq H^2(X, \mathbb Z)\xrightarrow{q_{2*}} q_{2*}H^2(X, \mathbb{Z}) \hookrightarrow H^2(\tilde Z, \mathbb{Z})\simeq \Lambda_{\mathrm{K3}}
\end{align*}
by defining them on the sublattice $W$ (see \eqref{eqn:sublattice}) in the first place, and subsequently on all of $\Lambda_{\mathrm{K3}}$ by $\mathbb{Q}$-linear extension to the elements presented in \eqref{eqn:elements}. 

The description of
\[\widehat{\pi_{2}}_*: H^2(\tilde Z, \mathbb Z)\rightarrow H^2(\tilde Y, \mathbb{Z})\]
will require some more effort: in fact, $H^2(\tilde Z, \mathbb Z)$ is an overlattice of finite index of $q_{2*}H^2(X,\mathbb Z)\oplus M_{\mathbb Z/2\mathbb Z}$, while $H^2(\tilde Y, \mathbb Z)$ is an overlattice of finite index of $q_{4*}H^2(X,\mathbb Z)\oplus M_{\mathbb Z/4\mathbb Z}=(\overline q_{2}\circ q_2)_*H^2(X,\mathbb Z)\oplus M_{\mathbb Z/4\mathbb Z}$; for now, notice that 
\begin{align}\label{picap2sottostella}
&\widehat{\pi_{2}}_*|_{M_{\mathbb Z/2\mathbb Z}}: M_{\mathbb Z/2\mathbb Z}\rightarrow M_{\mathbb Z/4\mathbb Z},\nonumber\\
&\widehat{\pi_{2}}_*|_{q_{2*}H^2(X,\mathbb Z)}: q_{2*}H^2(X,\mathbb Z)\rightarrow q_{4*}H^2(X,\mathbb Z).
\end{align}

\subsection{The image of $H^2(X,\mathbb Z)$ via the maps $\pi_{4*}$ and $\pi_{2*}$}\label{sec:maps}

\begin{proposition}\label{prop:sottostella}
The maps $\pi_{2*},\widehat{\pi_{2}}_*$ and $\pi_{4*}=\widehat{\pi_{2}}_*\circ \pi_{2*}$ act in the following way on $W$ and its image in $\pi_{2*}H^2(X,\mathbb Z)$:

\xymatrix@C=0.001pc{
{\begin{matrix}D_4 \\ e_1\dots e_4\end{matrix}} \ar[drr]_{\pi_{2*}}&{\begin{matrix}\oplus\\ \ \end{matrix}} &{\begin{matrix}D_4 \\ f_1\dots f_4\end{matrix}} \ar[drr] &{\begin{matrix}\oplus\\ \ \end{matrix}} &{\begin{matrix}D_4 \\ g_1\dots  g_4\end{matrix}} \ar[dll] &{\begin{matrix}\oplus\\ \ \end{matrix}} &{\begin{matrix}D_4 \\ h_1\dots h_4\end{matrix}} \ar[dll]^{\pi_{2*}} \ar@{}[r]|{\begin{matrix}\oplus\\ \ \end{matrix}} &{\begin{matrix}A_1 \\ a_1\end{matrix}} \ar[d]^{\pi_{2*}} &{\begin{matrix}\oplus\\ \ \end{matrix}} &{\begin{matrix}A_1 \\ a_2\end{matrix}}\ar[d]_{\pi_{2*}} \ar@{}[r]|(.45){\begin{matrix}\oplus\\ \ \end{matrix}} &{\begin{matrix}\langle-4\rangle\\ \sigma\end{matrix}}\ar[d]^{\pi_{2*}} \ar@{}[r]|(.35){\begin{matrix}\oplus\\ \ \end{matrix}} &{\begin{matrix}\begin{bmatrix}
4 & 0 &  0 \\
0 & 2 &  0 \\
0 & 0 &  2\end{bmatrix}\\ \rho, \omega_1,\omega_2\end{matrix}}\ar[d]^{\pi_{2*}}\\ 
\ &\ &{\begin{matrix}D_4 \\ \hat e_1\dots \hat e_4\end{matrix}} \ar[dr]_{\widehat{\pi_{2}}_*} &{\begin{matrix}\oplus\\ \ \end{matrix}} &{\begin{matrix}D_4 \\ \hat f_1\dots \hat f_4\end{matrix}} \ar[dl]^{\widehat{\pi_{2}}_*} \ar@{}[rrr]|{\begin{matrix}\oplus\\ \ \end{matrix}} &\ &\  &{\begin{matrix}A_1(2) \\ \hat a_1\end{matrix}}\ar[dr]_{\widehat{\pi_{2}}_*} &{\begin{matrix}\oplus\\ \ \end{matrix}} & {\begin{matrix}A_1(2) \\ \hat a_2\end{matrix}}\ar[dl]^{\widehat{\pi_{2}}_*}\ar@{}[r]|(.53){\begin{matrix}\oplus\\ \ \end{matrix}} &{\begin{matrix}\langle-8\rangle\\ \hat\sigma\end{matrix}}\ar@{}[r]|(.35){\begin{matrix}\oplus\\ \ \end{matrix}} &{\begin{matrix}\begin{bmatrix}
8 & 0 &  0 \\
0 & 4 &  0 \\
0 & 0 &  4\end{bmatrix}\\ \hat\rho, \hat\omega_1,\hat\omega_2\end{matrix}}\ar[d]^{\widehat{\pi_{2}}_*}\\ 
\ &\ &\ &{\begin{matrix}D_4 \\ \overline e_1\dots \overline e_4\end{matrix}} \ar@{}[rrrrr]|(.527){\begin{matrix}\oplus\\ \ \end{matrix}} &\ &\ &\ &\ &{\begin{matrix}A_1(2) \\ \overline a\end{matrix}} \ar@{}[rrr]|(.45){\begin{matrix}\oplus\\ \ \end{matrix}} &\ &\ &{\begin{matrix}\begin{bmatrix}
16 & 0 &  0 \\
0 & 8 &  0 \\
0 & 0 &  8\end{bmatrix}\\ \overline\rho, \overline\omega_1,\overline\omega_2\end{matrix}}\\
}
\end{proposition}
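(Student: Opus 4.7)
The essential tool is the projection formula for the degree-two quotient maps in play. Both $\pi_2$ and $\widehat{\pi_2}$ arise as (birational desingularizations of) quotients by an involution, namely $\tau^2$ on $X$ and $\hat\tau$ on $\tilde Z$. For any such quotient $\pi$ with covering involution $\iota$, the fact that $\iota$ acts freely outside isolated fixed points gives $\pi^*\pi_*(y)=y+\iota^*(y)$, hence
\[ \pi_*(x)\cdot\pi_*(y)=x\cdot\pi^*\pi_*(y)=x\cdot y+x\cdot\iota^*(y).\]
In particular, if $\iota^*(x)=-x$ then $\pi_*(x)^2=0$, and $\pi_*(x)=0$ since $H^2$ of a K3 surface is torsion-free. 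This one formula, applied case-by-case to each orbit of $\iota^*$ on the generators, will produce the whole table.

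First I would compute $\pi_{2*}$ on the generators of $W$ listed in \eqref{eqn:sublattice}, using that $(\tau^2)^*$ acts on the four copies of $D_4$ as the permutation $(1,3)(2,4)$ and trivially on every other summand (being the square of the swap on $A_1\oplus A_1$, the square of $-\mathrm{id}$ on $\langle-4\rangle$, and $\mathrm{id}$ on $\langle 4\rangle\oplus\langle 2\rangle^{\oplus 2}$). The orbits on the $D_4$'s are $\{e_i,g_i\}$ and $\{f_i,h_i\}$; using the formula together with the orthogonality of distinct $D_4$-summands gives $\hat e_i\cdot\hat e_j=e_i\cdot e_j$ and $\hat e_i\cdot\hat f_j=0$, so the images span $D_4\oplus D_4$. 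Every remaining generator $a_k,\sigma,\rho,\omega_1,\omega_2$ is $(\tau^2)^*$-fixed, so its image has doubled self-intersection, producing the claimed $A_1(2)\oplus A_1(2)$, $\langle-8\rangle$ and $\mathrm{diag}(8,4,4)$ summands of the middle row.

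Next, to describe $\widehat{\pi_2}_*$ on the middle row, I would use the functoriality identity $\hat\tau^*\circ\pi_{2*}=\pi_{2*}\circ\tau^*$ to read off the action of $\hat\tau^*$ directly from the action of $\tau^*$ on $W$ recalled in Section~\ref{marking}. This gives orbits $\{\hat e_i,\hat f_i\}$, $\{\hat a_1,\hat a_2\}$, $\hat\sigma\mapsto-\hat\sigma$, and fixed $\hat\rho,\hat\omega_1,\hat\omega_2$. Applying the projection formula again, the paired $D_4$'s collapse to a single $D_4$ (again by orthogonality), the two $A_1(2)$'s collapse to a single $A_1(2)$, the fixed rank-$3$ summand doubles to $\mathrm{diag}(16,8,8)$, and crucially $\widehat{\pi_2}_*(\hat\sigma)=0$ by the anti-invariance remark above, which is exactly why no $\hat\sigma$-summand appears in the bottom row. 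The composition $\widehat{\pi_2}_*\circ\pi_{2*}$ then reproduces $\pi_{4*}$ directly, since $\pi_4=\widehat{\pi_2}\circ\pi_2$.

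I do not expect a conceptual obstacle: the argument is a single application of the projection formula, repeated over all generators. The only delicate point is bookkeeping — correctly matching each pair of generators across the three rows and tracking the sign of $\hat\sigma$ under $\hat\tau^*$, which is what produces the asymmetric appearance of the bottom row (missing the $\langle-8\rangle$-summand but with the other scalars doubled again).
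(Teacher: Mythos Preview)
Your approach is essentially the paper's: both rest on the push--pull/projection formula. The paper states it once for the degree-$4$ map, $\pi_{4*}x\cdot\pi_{4*}y=\tfrac{1}{4}(\pi_4^*\pi_{4*}x\cdot\pi_4^*\pi_{4*}y)$ with $\pi_4^*\pi_{4*}x=\sum_{k=0}^3(\tau^k)^*x$, and says $\pi_{2*},\widehat{\pi_2}_*$ are handled ``similarly''; you instead factor through the two involution quotients and use $\hat\tau^*\circ\pi_{2*}=\pi_{2*}\circ\tau^*$ to transport the $\tau^*$-action to the middle row. These are the same computation, and your orbit-by-orbit bookkeeping is correct.

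There is one small logical slip. From $\iota^*(x)=-x$ you deduce $\pi_*(x)^2=0$ and then conclude $\pi_*(x)=0$ ``since $H^2$ of a K3 surface is torsion-free''. But the K3 lattice is indefinite and has many nonzero isotropic vectors, so $\pi_*(x)^2=0$ alone does not force $\pi_*(x)=0$. The fix is immediate and actually uses torsion-freeness in the right place: since $\pi\circ\iota=\pi$ one has $\pi_*\circ\iota^*=\pi_*$, hence $\pi_*(x)=\pi_*(\iota^*x)=\pi_*(-x)=-\pi_*(x)$, so $2\pi_*(x)=0$ and therefore $\pi_*(x)=0$. With this correction your argument for the vanishing of $\widehat{\pi_2}_*(\hat\sigma)$, and the rest of the table, goes through.
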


\begin{proof}
Since $\pi_{4}$ is a finite morphism of degree 4, we can compute the intersection form of $\pi_{4*}W$ via the push-pull formula: 
\[\pi_{4*}x\cdot\pi_{4*}y=\frac{1}{4}(\pi^{*}_4\pi_{4*}x\cdot\pi^{*}_4\pi_{4*}y)\]
where $\pi^{*}_4\pi_{4*}x=\sum_{k=0}^3(\tau^k)^*(x)$. 
The behaviour of $\pi_{2*}$ and $\widehat{\pi_{2}}_*$ can be similarly determined.
\end{proof}
\begin{corollary}
The embedding $q_{4*}H^2(X,\mathbb Z)\hookrightarrow H^2(\tilde Y,\mathbb Z)$ is unique up to isometries of the latter, and the same holds for $q_{2*}H^2(X,\mathbb Z)\hookrightarrow H^2(\tilde Z,\mathbb Z)$.
\end{corollary}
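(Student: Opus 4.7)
The plan is to invoke Theorem \ref{unicitaprimitivi}. Since $H^2(\tilde Y,\mathbb Z)\simeq H^2(\tilde Z,\mathbb Z)\simeq\Lambda_{\mathrm{K3}}$ is an even unimodular indefinite lattice of signature $(3,19)$, uniqueness of each embedding (up to isometry of the ambient lattice) reduces to showing that its orthogonal complement $T$ is the unique even lattice with the given signature and discriminant form, and that the map $O(T)\to O(q_T)$ is surjective. By construction, the orthogonal complement of $q_{2*}H^2(X,\mathbb Z)$ in $H^2(\tilde Z,\mathbb Z)$ is the exceptional lattice $M_{\mathbb Z/2\mathbb Z}$, and the orthogonal complement of $q_{4*}H^2(X,\mathbb Z)$ in $H^2(\tilde Y,\mathbb Z)$ is $M_{\mathbb Z/4\mathbb Z}$: the exceptional divisors of the resolution are orthogonal to the pushforward of $H^2(X,\mathbb Z)$ and generate $M_G$ primitively. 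In particular, this also shows that both embeddings are primitive.

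It then suffices to verify for $T=M_{\mathbb Z/2\mathbb Z}$ and $T=M_{\mathbb Z/4\mathbb Z}$ one of the sufficient conditions in Proposition \ref{condizione1}, Theorem \ref{condizione2} or Corollary \ref{condizione3}. For $G=\mathbb Z/2\mathbb Z$, the Nikulin lattice has rank $8$ and discriminant group $(\mathbb Z/2\mathbb Z)^6$, which trivially contains $(\mathbb Z/2\mathbb Z)^3$ as a direct summand, so Corollary \ref{condizione3} applies immediately.

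For $G=\mathbb Z/4\mathbb Z$, the lattice $M_{\mathbb Z/4\mathbb Z}$ has rank $14$ and contains $A_3^{\oplus 4}\oplus A_1^{\oplus 2}$ as a finite-index sublattice in $\Lambda_{\mathrm{K3}}$, reflecting the four $A_3$- and two $A_1$-singularities of $X/\tau$. Its discriminant form can be computed either directly from this description, or equivalently read off from the discriminant form of $q_{4*}H^2(X,\mathbb Z)$, which is determined by Proposition \ref{prop:sottostella} together with the images of the extension classes in \eqref{eqn:elements}. The expected invariant factors satisfy one of the three numerical patterns of Proposition \ref{condizione1} (or again contain a $(\mathbb Z/2\mathbb Z)^3$ summand, triggering Corollary \ref{condizione3}). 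The main obstacle is precisely this explicit determination of the invariant factors of $A_{M_{\mathbb Z/4\mathbb Z}}$; once they are in hand, the verification of the sufficient condition is a short finite check, and the corollary follows.
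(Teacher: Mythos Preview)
Your strategy of passing to the orthogonal complement $T=M_G$ and invoking Theorem \ref{unicitaprimitivi} is reasonable, and the $\mathbb Z/2\mathbb Z$ case is handled. The $\mathbb Z/4\mathbb Z$ case, however, has a concrete gap: neither of the two sufficient conditions you point to actually applies to $M=M_{\mathbb Z/4\mathbb Z}$. Proposition \ref{condizione1} explicitly requires $T$ to be \emph{indefinite}, whereas $M$ is negative definite (it is spanned by exceptional $(-2)$-curves), so that criterion is simply not available. And once one computes $A_M\simeq(\mathbb Z/4\mathbb Z)^2\times(\mathbb Z/2\mathbb Z)^2$ (this is recorded in Section \ref{Y}), there is no $(\mathbb Z/2\mathbb Z)^3$ direct summand, so Corollary \ref{condizione3} is also unavailable. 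Thus the ``short finite check'' you defer cannot succeed along either of the routes you name.

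The paper argues on the other side: rather than verifying hypotheses on the complement, it computes $q_{4*}H^2(X,\mathbb Z)$ and $q_{2*}H^2(X,\mathbb Z)$ directly by extending the maps of Proposition \ref{prop:sottostella} $\mathbb Q$-linearly to the classes in \eqref{eqn:elements}, finds that they are even, indefinite, of rank $8$ (resp.\ $14$) and length $4$ (resp.\ $6$), and applies Nikulin's Theorem 1.14.2 to these lattices as the embedded $S$. If you want to salvage the complement approach, the clean fix is to note that $\ell(A_M)=4$ and apply Theorem \ref{condizione2} with $S=M$ itself: $M$ has signature $(0,14)$, so $s_+=0<3$, $s_-=14<19$, and $4\le 22-14-2$; this gives the uniqueness you need without appealing to the two criteria that fail.
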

\begin{proof}
Computing $q_{4*}H^2(X,\mathbb Z)$ and $q_{2*}H^2(X,\mathbb Z)$ by $\mathbb Q$-linear extension of the maps in the proposition above to the elements in \eqref{eqn:elements}, it can be seen that the lattice $q_{4*}H^2(X,\mathbb Z)$  is even, indefinite, and it has $rk=8$ and $\ell=4$, so it satisfies the conditions of (\cite{Nikulin1}, Thm. 1.14.2); similarly, $q_{2*}H^2(X,\mathbb Z)$ is even, indefinite, it has $rk=14$ and $\ell=6$, and the same result can be applied.
\end{proof}

\subsection{The resolution of $Z=X/\tau^2$, and the lattice $\Gamma$}\label{sec:resolutionZ}

For any symplectic involution $\iota$ on a K3 surface $X$, the quotient surface $X/\iota$ has 8 isolated singularities, that are ordinary double points: to resolve it, it is sufficient to blow up these points once. Therefore, the exceptional lattice $N\coloneqq M_{\mathbb Z/2\mathbb Z}$ for the quotient $X/\iota$ (usually called \emph{Nikulin lattice}) satisfies ${N\otimes\mathbb Q=A_1^{\oplus 8}\otimes\mathbb Q}$: more precisely, if $\{n_1,\dots , n_8\}$ is a $\mathbb Z$-basis of $A_1^{\oplus 8}$, then a set of generators over $\mathbb Z$ for $N$ can be obtained by adding to this list the element ${\nu=(n_1+\dots +n_8)/2}$.

The second integral cohomology of the K3 surface $\tilde Z$, the minimal resolution of the quotient $Z=X/\tau^2$, can be described as an overlattice of index $2^6$ of $\pi_{2*}H^2(X,\mathbb Z)\oplus N$: this is done via an isomorphism of the discriminant groups of $\pi_{2*}H^2(X,\mathbb Z)$ and $N$, as described in Theorem \ref{thm:overlattices}. The generators that we need to add are the following:
\begin{align}\label{z}
z_1&=(\hat\beta+\hat f_1+\hat f_2+\hat a_2+\hat\eta)/2+(n_2+n_8)/2 \nonumber\\
z_2&=\hat\varepsilon/2+(n_3+n_8)/2\nonumber\\
z_3&=\hat a_2/2+(n_4+n_8)/2\\
z_4&=(\hat\varepsilon+\hat a_2+\hat\rho)/2+(n_5+n_8)/2\nonumber\\
z_5&=(\hat\beta+\hat f_1+\hat f_2+\hat\rho+\hat\chi+\hat\eta)/2+(n_6+n_8)/2\nonumber\\
z_6&=(\hat\beta+\hat f_1+\hat f_2+\hat\varepsilon+\hat a_2+\hat\rho+\hat\zeta)/2+(n_7+n_8)/2,\nonumber
\end{align}
where $\hat\beta=\pi_{2*}\beta \ (=\pi_{2*}\delta)$, and similarly the cap over the other elements of $\Lambda_{\mathrm{K3}}$ denotes their image via $\pi_{2*}$. These generators were already known in the general case of a symplectic involution (see \cite{VGS}, Lemma 1.10).

\begin{definition}\label{def:Gamma}
Notice that, while $\Omega_2\subset\Omega_4$ is annichilated by $\pi_{2*}$, the same is not true for its orthogonal complement in $\Omega_4$, the lattice $R$ (see Section \ref{Omega4}). Define $\hat R\coloneqq\pi_{2*}\Omega_4$: it is an overlattice of $\pi_{2*}R$, and it is spanned by ${\hat e_1-\hat f_1},\ {\hat\alpha-\hat\beta},\ {\hat e_3-\hat f_3},\ {\hat e_4-\hat f_4},\ {\hat\sigma},$ ${(\hat a_1-\hat a_2+\hat\sigma)/2}$. It has rank 6 and discriminant group $(\mathbb Z/2\mathbb Z)^4\times(\mathbb Z/4\mathbb Z)^2$.\\
Define $\Gamma$ the lattice of rank 14 and discriminant group $(\mathbb Z/2\mathbb Z)^6\times(\mathbb Z/4\mathbb Z)^2$, obtained as an overlattice of $\hat R\oplus N$ by adding to the list of generators the elements
\begin{equation}\label{incNR}
x_1=\frac{n_3+n_4+n_5+n_8+\hat\sigma}{2}, \qquad x_2=\frac{n_3+n_4+(\hat e_1-\hat f_1)+(\hat e_4-\hat f_4)}{2}+\frac{(\hat a_1-\hat a_2+\hat\sigma)}{4}.\end{equation}
\end{definition}

The lattice $\Gamma$ can be primitively embedded in $H^2(\tilde Z,\mathbb Z)$ with the lattice $S=\langle\hat e_1+\hat f_1,\ \hat\alpha+\hat\beta,\ \hat e_3+\hat f_3,\ {\hat e_2+\hat f_2},$ $\hat\rho,\ {(\hat a_1+\hat a_2+\hat\rho)/2}, (\hat\rho+\hat\omega_1+\hat\omega_2)/2, \hat\omega_2\rangle$ as orthogonal complement; indeed, we can obtain $H^2(\tilde Z,\mathbb Z)$ as overlattice of finite index of $S\oplus\Gamma$ by adding the generators:
\begin{align}\label{z2}
z'_1&= (\hat\alpha+\hat\beta)/2+ (\hat\alpha-\hat\beta)/2,\nonumber\\
z'_2&=(\hat e_3+\hat f_3)/2+(\hat e_3-\hat f_3)/2,\nonumber\\
z'_3&=(\hat e_2+\hat f_2)/2+(\hat e_4-\hat f_4)/2+(n_3+n_4+n_5+n_8)/2,\nonumber\\
z'_4&=(\hat a_1+\hat a_2+\hat\rho)/4+(n_2+n_3+n_4+n_6)/2+(\hat e_1-\hat f_1+\hat e_4-\hat f_4)/2,\\
z'_5&=(\hat\rho+\hat\omega_1+\hat\omega_2)/4+(n_2+n_7)/2+(\hat e_1-\hat f_1+\hat e_4-\hat f_4)/2,\nonumber\\
z'_6&=\hat\omega_2/2+(n_2+n_6)/2\nonumber,\\
z'_7&=(\hat e_1+\hat f_1+\hat e_2+\hat f_2+(\hat a_1+\hat a_2+\hat\rho)/2+\hat\omega_2)/4+\nonumber\\
       &\quad +(\hat\alpha-\hat\beta)/2+(3n_5+2n_6+n_8)/4+(x_1+x_2)/2,\nonumber\\
z'_8&=\hat\rho/4+(2n_2+n_3+3n_4+n_5+2n_6+3n_8)/4+x_1/2.\nonumber
\end{align}

\begin{remark}
The primitive embedding $\Gamma\hookrightarrow\Lambda_{\mathrm{K3}}$ is unique up to isometries of $\Lambda_{\mathrm{K3}}$, because Theorem \ref{unicitaprimitivi} holds: in fact the orthogonal complement $S$ of $\Gamma$ satisfies the first condition of Proposition \ref{condizione1}.
\end{remark}

\subsection{The map $\widehat{\pi_{2}}_*$ and the resolution of $Y=X/\tau$}\label{Y}

The action of a symplectic automorphism of order 4 $\tau$ on a K3 surface $X$ has always exactly eight isolated points on $X$ with non trivial stabilizer: four of them are fixed by $\tau$, and four more exchanged by $\tau$ (so they are fixed by $\tau^2$) (see \cite{Nikulin2}, \S 5, case 2); therefore the singular locus of the quotient $X/\tau$ consists of six isolated points, two of which are resolved by blowing up once (thus introducing two rational curves in the quotient surface), and each of the other four by three curves in $A_3$ configuration. \\
The exceptional lattice $M\coloneqq M_{\mathbb Z/4\mathbb Z}$ therefore satisfies $M\otimes\mathbb Q=( A_3^{\oplus 4}\oplus A_1^{\oplus 2})\otimes\mathbb Q$; calling $\tilde m^1, \ \tilde m^2$ the generators of the two copies of $A_1$, and $m^i_1,\ m^i_2,\ m^i_3$ the generators of the $i$-th copy of $A_3$, such that $m^i_2$ is the curve that intersects both $m^i_1$ and $m^i_3$, then a set of $\mathbb Z$-generators for $M$ consists of all these elements, and (see \cite{Nikulin2}, \S 6, Def. 6.2, case 1b) the class
\begin{equation}\label{mu}
\mu=\frac{\sum_{i=1}^4(m^i_1+2m^i_2+3m^i_3)}{4}+\frac{\tilde m^1+\tilde m^2}{2}.
\end{equation}
We can now describe the second integral cohomology of the K3 surface $\tilde Y$, the minimal resolution of the quotient $Y=X/\tau$: the discriminant group of each of the orthogonal summands $M$ and  $\pi_{4*}H^2(X,\mathbb Z)$ is isomorphic to $(\mathbb{Z}/4\mathbb{Z})^2\times(\mathbb{Z}/2\mathbb{Z})^2$, and the elements that define $\Lambda_{\mathrm{K3}}$
as an overlattice of $M \oplus \pi_{4*}H^2(X,\mathbb Z)$ are:
\begin{align}\label{y}
y_1&=\big(m^2_1+2m^2_2+3m^2_3+m^3_1+2m^3_2+3m^3_3\big)/4+\tilde m^2/2 + (\overline e_1+\overline e_4+\overline\zeta+\overline\eta)/2+(\overline a+\overline\chi)/4 \nonumber\\ 
y_2&=(m^3_1+2m^3_2+3m^3_3+m^4_1+2m^4_2+3m^4_3)/4+\tilde m^2/2 + (\overline a+\overline\zeta+3\overline\eta)/4 \nonumber\\
y_3&=(m^4_1+m^4_3+\tilde m^2)/2 + (\overline e_1+\overline e_4+\overline\alpha+ \overline\chi+\overline\zeta)/2\\
y_4&=(\tilde m^1+\tilde m^2)/2 + \overline a/2, \nonumber
\end{align}
where $\overline e_i=\pi_{4*} e_i=\pi_{4*} f_i=\pi_{4*} g_i=\pi_{4*} h_i,\ \overline a=\pi_{4*} a_1=\pi_{4*} a_2$, and similarly $\overline{\star}=\pi_{4*}(\star)$.

The exceptional lattice $M$ can be also computed from the image of $N$ via $\widehat{\pi_2}_*$ (see Rmk. \ref{picap2sottostella}) with the resolution of the singularities that arise from the quotient: in fact, the involution $\hat\tau$ (that is induced on $\tilde Z$ by the action of $\tau$ on $X$) acts by fixing two points on each of the the four exceptional curves of $\tilde Z$ corresponding to the four points of $X$ fixed by $\tau$, and by exchanging pairwise the remaining four exceptional curves (these correspond to the four points fixed only by $\tau^2$). Therefore, the invariant lattice for the action of $\hat\tau^*$ on $N$ is the sublattice spanned by the four invariant curves, and the sum of the pairs of exchanged curves.\\
In the lattice $\Gamma$ (see Def. \ref{def:Gamma}) the orthogonal complement of the invariant lattice for the action of $\hat\tau^*$ is a copy of $\Omega_2$: this is indicative of the fact that the surface $\tilde Z$ admits a symplectic involution $\hat\tau$.

\begin{remark}\label{Omega2suZ}
The curves of $N$ were numbered in such a way that the gluing between $N$ and $\pi_{2*}H^2(X,\mathbb Z)$ is described by the elements in \eqref{z}: since the action of $\hat\tau^*$ on $\pi_{2*}H^2(X,\mathbb Z)$ determines the action of $\hat\tau^*$ on $N$ via this gluing, we find accordingly that $\hat\tau^*$ fixes $n_1, n_2, n_6, n_7$ and exchanges $n_3$ with $n_4$, $n_5$ with $n_8$.
\end{remark}

\begin{proposition}\label{immagineGamma}
It holds $\widehat{\pi_{2}}_*(\Gamma)\subset M$: more precisely, $\widehat{\pi_{2}}_*$ annichilates $\hat R$, because $\widehat{\pi_{2}}_*\hat R=\widehat{\pi_{2}}_*\pi_{2*}\Omega_4=\pi_{4*}\Omega_4=0$, and is defined on the $\mathbb Q$-generators of $N$ as follows:
\begin{align*}
\overline n_1\coloneqq\widehat{\pi_{2}}_*n_1&=m^1_1+2m^1_2+m^1_3,\quad 
\overline n_2\coloneqq\widehat{\pi_{2}}_*n_2=m^3_1+2m^3_2+m^3_3,\\
\overline n_6\coloneqq\widehat{\pi_{2}}_*n_6&=m^2_1+2m^2_2+m^2_3,\quad
\overline n_7\coloneqq\widehat{\pi_{2}}_*n_7=m^4_1+2m^4_2+m^4_3,\\
\overline n_3\coloneqq\widehat{\pi_{2}}_*n_3&=\widehat{\pi_{2}}_*n_4=\tilde m^1,\qquad
\overline n_5\coloneqq\widehat{\pi_{2}}_*n_5=\widehat{\pi_{2}}_*n_8=\tilde m^2,
\end{align*}
so that $\widehat{\pi_{2}}_*A_1^{\oplus 8}=A_1(2)^{\oplus 4}\oplus A_1^{\oplus 2}$.
\end{proposition}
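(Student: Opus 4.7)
The plan is to handle the three pieces of the statement in turn: the vanishing of $\widehat{\pi_{2}}_*$ on $\hat R$, the inclusion $\widehat{\pi_{2}}_*(N) \subseteq M$, and the explicit formulas for the images of the generators $n_i$. For the first, I use $\hat R = \pi_{2*}\Omega_4$ from Def.~\ref{def:Gamma} together with the factorisation $\pi_{4*} = \widehat{\pi_{2}}_* \circ \pi_{2*}$ induced by $\pi_4 = \widehat{\pi_2} \circ \pi_2$, which reduces the claim to $\pi_{4*}\Omega_4 = 0$. For any $v \in \Omega_4$ and any $x \in H^2(\tilde Y, \mathbb Z)$, the projection formula gives $\pi_{4*}v \cdot x = v \cdot \pi_4^* x$; since $\pi_4 \circ \tau = \pi_4$ as rational maps, $\pi_4^* x$ is $\tau^*$-invariant and thus lies in $\Omega_4^\perp$, so the intersection vanishes. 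Non-degeneracy of the intersection form on $H^2(\tilde Y, \mathbb Q)$ then forces $\pi_{4*}v = 0$.

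For the inclusion $\widehat{\pi_{2}}_*(N) \subseteq M$, given $n \in N$ and $u \in H^2(X, \mathbb Z)$ I rewrite $q_{4*}u = \widehat{\pi_{2}}_* q_{2*}u$ and apply the push-pull formula for the degree-2 map $\widehat{\pi_2}$:
\[
\widehat{\pi_{2}}_*n \cdot q_{4*}u = \tfrac{1}{2}(n + \hat\tau^* n) \cdot q_{2*}(u + \tau^* u),
\]
using $\hat\tau^* q_{2*} = q_{2*}\tau^*$ (as $\tau$ descends to $\hat\tau$). This vanishes because $N$ and $q_{2*}H^2(X, \mathbb Z)$ are orthogonal summands in $H^2(\tilde Z, \mathbb Z)$. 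Hence $\widehat{\pi_{2}}_*(N)$ is orthogonal to $q_{4*}H^2(X, \mathbb Z)$ and so lies in its primitive complement $M$. For the exchanged pairs $\{n_3, n_4\}$ and $\{n_5, n_8\}$ from Remark~\ref{Omega2suZ}, the $\hat\tau$-invariance of $\widehat{\pi_{2}}_*$ gives $\widehat{\pi_{2}}_* n_3 = \widehat{\pi_{2}}_* n_4$ and $\widehat{\pi_{2}}_* n_5 = \widehat{\pi_{2}}_* n_8$; the push-pull formula assigns each of these classes self-intersection $-2$ together with orthogonality to all other images, identifying them with the two $\langle -2\rangle$ generators $\tilde m^1$ and $\tilde m^2$ of $M$ up to the chosen labelling.

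For the invariant $n_i$ with $i \in \{1,2,6,7\}$ push-pull yields $(\widehat{\pi_{2}}_* n_i)^2 = -4$ and orthogonality to every other $\widehat{\pi_{2}}_* n_j$, so the image is an effective $(-4)$-class supported on one of the four $A_3$ summands of $M$. To pin down the coefficients, I would argue locally: $\hat\tau$ acts on $n_i \cong \mathbb{P}^1$ with exactly two fixed points, hence $\widehat{q_2}$ restricted to $n_i$ has degree $2$ onto its image $E$, so the Weil push-forward is $\widehat{q_{2}}_* n_i = 2E$ with $E^2 = -1$, and $E$ passes through the two $A_1$-singularities of $\tilde Z/\hat\tau$ that resolve to $m^i_1$ and $m^i_3$, with strict transform $m^i_2$. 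The Cartier pullback through the resolution $\pi \colon \tilde Y \to \tilde Z/\hat\tau$ then reads $\pi^*(2E) = 2m^i_2 + m^i_1 + m^i_3$, which is $\widehat{\pi_{2}}_*(n_i)$. A direct linear computation using $\widehat{\pi_{2}}_*\hat R = 0$ and the formulas above yields $\widehat{\pi_{2}}_*(x_1) = \tilde m^1 + \tilde m^2$ and $\widehat{\pi_{2}}_*(x_2) = \tilde m^1$ for the two glueing elements in \eqref{incNR}, both lying in $M$, completing the check that $\widehat{\pi_{2}}_*(\Gamma) \subseteq M$. The main obstacle is the identification of the coefficient $2$ on $m^i_2$ in the invariant case: it combines the degree of $\widehat{q_2}$ along $n_i$ with the non-Cartier behaviour of the image divisor at the two $A_1$-singularities, and therefore has to be extracted from the local geometry of the resolution rather than from intersection numbers alone.
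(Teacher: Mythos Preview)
Your argument is largely sound and covers the same geometric ground as the paper's proof, with some stylistic differences. In particular, your projection-formula proofs that $\widehat{\pi_{2}}_*$ kills $\hat R$ and sends $N$ into $M$ are cleaner than what the paper writes (the paper essentially takes both for granted), and your local computation of $\beta^*(2E)=m^i_1+2m^i_2+m^i_3$ via the $\mathbb Q$-Cartier pullback at an $A_1$-point is equivalent to the paper's observation that $\overline n_k=\beta^*\widehat q_{2*}n_k$ is orthogonal to the exceptional curves $m^i_1,m^i_3$ (this is just $\beta^*x\cdot E=x\cdot\beta_*E=0$), has square $-4$, and is supported on a single $A_3$ block; both routes force the coefficients $(1,2,1)$. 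One caution: writing $E^2=-1$ on the singular surface is a shorthand that only makes sense after passing to $\mathbb Q$-divisors, so you might phrase that step directly in terms of $\beta^*$ rather than intersecting on $\tilde Z/\hat\tau$.

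There is, however, a genuine gap. You establish the formulas only ``up to the chosen labelling'': you know each invariant $n_k$ maps to $m^i_1+2m^i_2+m^i_3$ for \emph{some} $i$, and each exchanged pair maps to one of $\tilde m^1,\tilde m^2$, but you do not determine the bijection $\{n_1,n_2,n_6,n_7\}\to\{1,2,3,4\}$ or decide which of $\tilde m^1,\tilde m^2$ receives $\{n_3,n_4\}$. These labellings are \emph{not} free: the curves $n_j$ have already been numbered by the gluing elements $z_1,\dots,z_6$ in \eqref{z}, and the $m^i_j,\tilde m^j$ by the gluing elements $y_1,\dots,y_4$ in \eqref{y}. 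The paper fixes the correspondence precisely by requiring that $\widehat{\pi_{2}}_*z_i$ be integral in $H^2(\tilde Y,\mathbb Z)$ for each $i$; this is a non-trivial constraint (it involves half-integer combinations of the $\overline n_j$ with classes from $\pi_{4*}H^2(X,\mathbb Z)$, which glue to $M$ via the $y_i$) and it singles out the stated assignment. Your check that $\widehat{\pi_{2}}_*(x_1),\widehat{\pi_{2}}_*(x_2)\in M$ is correct but not sufficient, since $x_1,x_2$ only involve the exchanged $n_j$'s together with $\hat R$, and are insensitive to how the invariant $n_k$ are distributed among the four $A_3$ blocks. To complete the proof you need to carry out the integrality check for the $z_i$.
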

\begin{proof}
Let $k=1,2,6,7,\ j=1,\dots,8$. The surface $\tilde Z/\hat\tau$ is singular in eight points, two on each of the curves $\hat q_{2*}n_k$; consider the blow-up $\beta:\tilde Y\rightarrow \tilde Z/\hat\tau$ of the singular points: then, the curve $\overline n_j\coloneqq\widehat{\pi_{2}}_*n_j$ is the strict transform of $\hat q_{2*}n_j$. By push-pull we have
$\overline n_k^2=-4$, $\overline n_3^2=-2=\overline n_5^2$.
Consider the following diagram:
\begin{center}
\begin{tikzpicture}
\draw[very thick] (-5.4,0) -- (-5.4,1.2) node[above]{$n_1$};
	\node at (-5.4,-0.1)(a){};
	\node at (-5.4,-0.9)(a1){};
     \draw[->] (a) to (a1);
\draw[very thick] (-5.4,-1) -- (-5.4,-2.2); 
\filldraw[black] (-5.4,-1.4) circle (1.2pt);
\filldraw[black] (-5.4,-1.8) circle (1.2pt);

\draw[very thick] (-4.7,0) -- (-4.7,1.2) node[above]{$n_2$};
	\node at (-4.7,-0.1)(b){};
	\node at (-4.7,-0.9)(b1){};
     \draw[->] (b) to (b1);
\draw[very thick] (-4.7,-1) -- (-4.7,-2.2); 
\filldraw[black] (-4.7,-1.4) circle (1.2pt);
\filldraw[black] (-4.7,-1.8) circle (1.2pt);

\draw[very thick] (-4,0) -- (-4,1.2) node[above]{$n_6$};
	\node at (-4,-0.1)(c){};
	\node at (-4,-0.9)(c1){};
     \draw[->] (c) to (c1);
\draw[very thick] (-4,-1) -- (-4,-2.2); 
\filldraw[black] (-4,-1.4) circle (1.2pt);
\filldraw[black] (-4,-1.8) circle (1.2pt);

\draw[very thick] (-3.3,0) -- (-3.3,1.2) node[above]{$n_7$};
	\node at (-3.3,-0.1)(d){};
	\node at (-3.3,-0.9)(d1){};
     \draw[->] (d) to (d1);
\draw[very thick] (-3.3,-1) -- (-3.3,-2.2); 
\filldraw[black] (-3.3,-1.4) circle (1.2pt);
\filldraw[black] (-3.3,-1.8) circle (1.2pt);

\draw[very thick] (-2.6,0) -- (-2.6,1.2) node[above]{$n_3$};
\draw[very thick] (-1.9,0) -- (-1.9,1.2) node[above]{$n_4$};
	\node at (-2.6,-0.1)(s){};
     \draw (s) to[bend right] (-2.25,-0.5);
	\node at (-1.9,-0.1)(t){};
     \draw (t) to[bend left] (-2.25,-0.5);
	\node at  (-2.25,-0.9)(st){};
	\draw[->]  (-2.25,-0.5) -- (st);
\draw[very thick] (-2.25,-1) -- (-2.25,-2.2) ; 

\draw[very thick] (-1.2,0) -- (-1.2,1.2) node[above]{$n_5$};
\draw[very thick] (-0.5,0) -- (-0.5,1.2) node[above]{$n_8$};
	\node at (-1.2,-0.1)(u){};
     \draw (u) to[bend right] (-0.85,-0.5);
	\node at (-0.5,-0.1)(v){};
     \draw (v) to[bend left] (-0.85,-0.5);
	\node at  (-0.85,-0.9)(uv){};
	\draw[->]  (-0.85,-0.5) -- (uv);
\draw[very thick] (-0.85,-1) -- (-0.85,-2.2); 

\node at (0.5,0.6){$\tilde Z$};
\draw[->]  (0.5,0.2) --  (0.5,-1.2) node[midway,right]{$\hat q_2$};
\node at (0.5,-1.6){$\tilde Z/\hat\tau$};
\node at (-12.5,-1.6){$\tilde Y\simeq\widetilde{\tilde Z/\hat\tau}$};

\draw[->](-7,-1.6) -- (-6,-1.6) node[midway,above]{$\beta$};

\draw[very thick](-7.6,-1) --(-7.6,-2.2); 
\draw[very thick](-8.3,-1) --(-8.3,-2.2); 

\draw[very thick](-9,-1) --(-9,-2.2);
\draw(-9.2,-1) to[bend right](-8.8,-1.5);
\draw(-9.2,-2.2) to[bend left](-8.8,-1.7);

\draw[very thick](-9.7,-1) --(-9.7,-2.2);
\draw(-9.9,-1) to[bend right](-9.5,-1.5);
\draw(-9.9,-2.2) to[bend left](-9.5,-1.7);

\draw[very thick](-10.4,-1) --(-10.4,-2.2);
\draw(-10.6,-1) to[bend right](-10.2,-1.5);
\draw(-10.6,-2.2) to[bend left](-10.2,-1.7);

\draw[very thick](-11.1,-1) --(-11.1,-2.2);
\draw(-11.3,-1) to[bend right](-10.9,-1.5);
\draw(-11.3,-2.2) to[bend left](-10.9,-1.7);
\end{tikzpicture}
\end{center}
Firstly, notice that either $(\overline n_3, \overline n_5)=(\tilde m^1,\tilde m^2)$ or $(\overline n_3, \overline n_5)=(\tilde m^2,\tilde m^1)$.\\
The eight exceptional curves introduced with the blow-up $\beta$ span a new copy of the Nikulin lattice 
\[N_Y=\langle m^i_1, \ m^i_3\rangle_{i=1}^4\subset M;\] the pullback $\beta^*\hat q_{2*}n_k$ of each singular curve  is an $A_3$ lattice, and the class $\overline n_k$ (being the strict transform) is by definition orthogonal to the exceptional curves: thus we find that $\overline n_k=m^i_1+2m^i_2+m^i_3$ for some $i$.
To determine which copy of $A_3,\ A_1$ in $M$ each $\overline n_j$ corresponds to, we still have to require that the image of the elements $z_i$ defined in \eqref{z} be integer in $H^2(\tilde Y, \mathbb Z)$; this forces the definition of  $\widehat{\pi_{2}}_*$ as stated.
\end{proof}
\begin{corollary}\label{MinquozienteZ}
The lattice $M$ is an overlattice of index $2^5$ of the lattice $\widehat{\pi_{2}}_*N\oplus N_Y$, obtained by adding as generators the elements $m^1_2=(\overline n_1-m^1_1-m^1_3)/2$, $m^2_2=(\overline n_6-m^2_1-m^2_3)/2$, $m^3_2=(\overline n_2-m^3_1-m^3_3)/2$, $m^4_2=(\overline n_7-m^4_1-m^4_3)/2$ and $\mu$ (see \eqref{mu}).
\end{corollary}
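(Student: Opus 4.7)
The corollary is essentially the inverse of the computation performed in Proposition \ref{immagineGamma}, so I would structure the argument in three short steps. First, I would check that both summands embed into $M$: $\widehat{\pi_{2}}_*N\subset M$ is the content of Proposition \ref{immagineGamma}, while $N_Y=\langle m^i_1,m^i_3\rangle_{i=1}^4\subset M$ by definition. The two summands are orthogonal in $M$: for $k\in\{1,2,6,7\}$ the identity $\overline n_k=m^i_1+2m^i_2+m^i_3$ gives $\overline n_k\cdot m^i_1=\overline n_k\cdot m^i_3=0$ (because $m^i_1$ and $m^i_3$ are the end vertices of the $A_3$-chain), while $\overline n_3=\overline n_4=\tilde m^1$ and $\overline n_5=\overline n_8=\tilde m^2$ lie in the $A_1^{\oplus 2}$-block of $M$, which is orthogonal to the $A_3^{\oplus 4}$-block by construction; hence $\widehat{\pi_{2}}_*N\oplus N_Y\hookrightarrow M$.

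Second, I would verify that $M$ is attained after adjoining the listed classes. By Nikulin's description (\cite{Nikulin2}, \S 6) the lattice $M$ is generated over $\mathbb Z$ by $\{m^i_j\}_{i=1,\dots,4;\,j=1,2,3}$, $\tilde m^1,\tilde m^2$ and $\mu$. The sublattice $\widehat{\pi_{2}}_*N\oplus N_Y$ already contains $m^i_1,m^i_3$ (inside $N_Y$) and $\tilde m^1,\tilde m^2$ (inside $\widehat{\pi_{2}}_*N$); inverting the four identities of Proposition \ref{immagineGamma} relative to $\overline n_1,\overline n_6,\overline n_2,\overline n_7$ produces the four half-classes $m^i_2$ stated in the corollary. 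Adjoining these together with $\mu$ realises every generator of $M$, and since each added class itself lies in $M$ the resulting overlattice coincides with $M$.

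Third, I would obtain the index $2^5$ by a discriminant count. By Proposition \ref{immagineGamma} the lattice $\widehat{\pi_{2}}_*N$ is the index-$2$ overlattice of $\langle\overline n_1,\overline n_2,\overline n_6,\overline n_7\rangle\oplus\langle\tilde m^1,\tilde m^2\rangle\simeq A_1(2)^{\oplus 4}\oplus A_1^{\oplus 2}$ obtained by adjoining $\widehat{\pi_{2}}_*\nu=(\overline n_1+\overline n_2+\overline n_6+\overline n_7)/2+\tilde m^1+\tilde m^2$, so $|{\rm disc}\,\widehat{\pi_{2}}_*N|=4^4\cdot 2^2/2^2=2^8$; together with $|{\rm disc}\,N_Y|=2^8$ this gives $|{\rm disc}(\widehat{\pi_{2}}_*N\oplus N_Y)|=2^{16}$. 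On the other hand, $M$ is the index-$4$ overlattice of $A_3^{\oplus 4}\oplus A_1^{\oplus 2}$ defined by the isotropic order-$4$ class $\mu$, so $|{\rm disc}\,M|=2^{10}/16=2^6$. The identity $[M:\widehat{\pi_{2}}_*N\oplus N_Y]^2=2^{16}/2^6=2^{10}$ yields the desired index $2^5$. The only delicate aspect is bookkeeping the explicit presentations of both lattices; conceptually nothing beyond Proposition \ref{immagineGamma} and Nikulin's description of $M$ is required.
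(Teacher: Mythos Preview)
Your argument is correct and essentially parallel to the paper's. The only genuine difference is in Step~3: the paper tracks the index $2^5$ by successive adjunction---first the four classes $m^i_2$ contribute a factor $2^4$, then the rewriting
\[
\mu=\frac{\nu_Y+\sum_i m^i_2+\sum_i m^i_3+\tilde m^1+\tilde m^2}{2}
\]
shows that $\mu$ contributes the remaining factor $2$---whereas you obtain the index in one stroke from the discriminants. Your route is slightly cleaner, since it avoids checking independence of the adjoined classes. One bookkeeping caution: your value $|\mathrm{disc}\,N_Y|=2^8$ is correct for $N_Y=\langle m^i_1,m^i_3\rangle$ taken as the literal $\mathbb Z$-span $A_1^{\oplus 8}$, and this is the reading under which the index $2^5$ (and the paper's own claim that the four $m^i_2$ give a $2^4$ extension) holds; if instead $N_Y$ already contains $\nu_Y$ (as the phrase ``Nikulin lattice'' and the later choice of basis suggest), then $|\mathrm{disc}\,N_Y|=2^6$, the four $m^i_2$ are not independent modulo $\widehat{\pi_2}_*N\oplus N_Y$ (since $\sum_i m^i_2=(\sum_k\overline n_k)/2-\nu_Y$), and the index drops to $2^4$. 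Your computation is consistent with the corollary as stated; just be explicit about which convention for $N_Y$ you are using.
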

\begin{proof}
The Nikulin lattice $N$ is an overlattice of index 2 of $A_1^{\oplus 8}=\langle a_1,\dots , a_8\rangle$ obtained by adding as generator the element $\nu=(a_1+,\dots , + a_8)/2$: in our case
\[\nu_Y=\frac{\sum_{i=1}^4(m^i_1+m^i_3)}{2};\]
the element $\mu$ defined in \eqref{mu} can then be written as
\[\mu=\frac{\nu_Y+m^1_2+m^2_2+m^3_2+m^4_2+m^1_3+m^2_3+m^3_3+m^4_3+\tilde m^1+\tilde m^2}{2},\]
so it generates an overlattice (of index 2) of the overlattice (of index $2^4$) of $\widehat{\pi_{2}}_*N\oplus N_Y$ generated by the $m^i_2, \ i=1,\dots , 4$.
\end{proof}

Notice that we have the following equalities:
\begin{align*}H^2(\tilde Y, \mathbb Q)&=(\pi_{4*}H^2(X, \mathbb Z)\oplus M)\otimes\mathbb Q=(\pi_{4*}H^2(X, \mathbb Z)\oplus\widehat{\pi_{2}}_*N\oplus N_Y)\otimes\mathbb Q=\\ 
&=(\widehat{\pi_{2}}_*(\pi_{2*}H^2(X, \mathbb Z)\oplus N)\oplus N_Y)\otimes\mathbb Q=(\widehat{\pi_{2}}_*H^2(\tilde Z, \mathbb Z)\oplus N_Y)\otimes\mathbb Q.
\end{align*}
Working on $\mathbb Z$, we recover the first equality using the $y_i$'s in \eqref{y}, and the second one as in Proposition \ref{MinquozienteZ}; the next one is trivial, and for the last one we use the $z_i$'s in \eqref{z}.
Thus, the lattice $H^2(\tilde Y, \mathbb Z)$ can be also described directly as an overlattice of finite index of $\widehat{\pi_{2}}_*H^2(\tilde Z, \mathbb Z)\oplus N_Y$, allowing for an easier computation of the map $\widehat{\pi_{2}}^*$ in Section \ref{dualmaps}: to do this, use for $\widehat{\pi_{2}}_*H^2(\tilde Z, \mathbb Z)$ the $\mathbb Z$-basis $\{\overline e_1, \overline\alpha, \overline e_3,\overline e_4, \overline a, \overline\chi, \overline\zeta, \overline\eta,\widehat{\pi_{2}}_*n_6, \widehat{\pi_{2}}_*\nu, \widehat{\pi_{2}}_*z_1,\widehat{\pi_{2}}_*z_2,$ $\widehat{\pi_{2}}_*z_5, \widehat{\pi_{2}}_*z_6\}$, with the $z_i$'s defined in \eqref{z}, and for $N_Y$ the $\mathbb Z$-basis $\{\nu_Y, m^1_3,m^2_1,m^2_3,m^3_1,$ $m^3_3,m^4_1,m^4_3\}$; then, the gluing elements are
\begin{align}\label{y'}
y'_1&=(\overline a+\overline\chi+\widehat{\pi_{2}}_*\nu+m^1_3+m^2_1+m^3_1+m^4_3)/2\nonumber\\
y'_2&=(\overline\alpha+\overline\zeta+\widehat{\pi_{2}}_* z_1+\widehat{\pi_{2}}_*z_5+m^2_3+m^3_3)/2\nonumber\\
y'_3&=(\overline\chi+\overline\zeta+\overline\eta+\widehat{\pi_{2}}_* n_6+m^4_1+m^4_3)/2\\
y'_4&=(\overline e_4+\overline a+\overline\eta+\widehat{\pi_{2}}_* z_1+\widehat{\pi_{2}}_*z_6+m^3_3+m^4_3)/2\nonumber\\
y'_5&=(\overline\alpha+\overline a+\overline\eta+\widehat{\pi_{2}}_*z_1+\widehat{\pi_{2}}_* z_5+m^2_3+m^3_1+m^4_1+m^4_3)/2\nonumber\\
y'_6&=(\widehat{\pi_{2}}_* n_6+m^2_1+m^2_3)/2.\nonumber
\end{align}

\subsection{The dual maps}\label{dualmaps}

We're now going to define the dual maps
\begin{align*}
\pi_{4}^*&: H^2(\tilde Y, \mathbb Z)\rightarrow H^2(X, \mathbb{Z}) \\
\widehat{\pi_{2}}^*&: H^2(\tilde Y, \mathbb Z)\rightarrow H^2(\tilde Z, \mathbb{Z}) \\
\pi_{2}^*&: H^2(\tilde Z, \mathbb Z)\rightarrow H^2(X, \mathbb{Z})
\end{align*}
using the descriptions of $H^2(\tilde Y, \mathbb Z)$ as an overlattice respectively of $\pi_{4*}\Lambda_{\mathrm{K3}}\oplus M$ and $\widehat{\pi_{2*}}\Lambda_{\mathrm{K3}}\oplus N_Y$, and of $H^2(\tilde Z, \mathbb Z)$ as an overlattice of $\pi_{2*}\Lambda_{\mathrm{K3}}\oplus N$.

\begin{proposition}
\begin{enumerate}
\item  The map $\pi_{4}^*$ annichilates $M$, and acts on  $\pi_{4*}W\subset \pi_{4*}\Lambda_{\mathrm{K3}}$ as\\
\[\xymatrix@R=0.01pc @C=0.3pc{
{\pi_{4}}^*: &D_4 \ar@{}[r]|(.4){\oplus} &A_1(2)  \ar@{}[r]|(.4){\oplus} &{\begin{bmatrix}
16 & 0 &  0 \\
0 & 8 &  0 \\
0 & 0 &  8\end{bmatrix}} &\ \ar[rr] 
&\ &{\quad D_4^{\oplus{4}}}  \ar@{}[r]|{\oplus} &A_1^{\oplus 2}  \ar@{}[r]|(.35){\oplus} 
&{\begin{bmatrix}
4 & 0 &  0 \\
0 & 2 &  0 \\
0 & 0 &  2\end{bmatrix}}\\
\ &{\begin{matrix}\overline e_1\\ \overline e_2\\ \overline e_3\\ \overline e_4\end{matrix}} &\overline a  &{\overline\rho,\ \overline\omega_1, \ \overline\omega_2} &\ \ar@{|->}[rr] 
&\ &{\begin{matrix}e_1+f_1+g_1+h_1\\ e_2+f_2+g_2+h_2\\ e_3+f_3+g_3+h_3\\ e_4+f_4+g_4+h_4\end{matrix}}&{2a_1+2a_2} 
&{4\rho,4\omega_1,4\omega_2}
}\]
Its action can be extended to $\pi_{4*}\Lambda_{\mathrm{K3}}$ adding these elements (and their respective images to the image lattice):
$\overline\alpha =\overline\rho/4+\overline e_1+ (\overline e_2+\overline e_4)/2,\ 
\overline\chi =\overline\rho/2,\ 
\overline\zeta =\overline e_1 + (\overline e_2+\overline e_4 + \overline a +\overline \omega_1)/2,\ 
\overline\eta =\overline e_1 + (\overline e_2+\overline e_4 + \overline a +\overline \omega_2)/2$;
to extend the action to $H^2(\tilde Y, \mathbb Z)$,  add also $y_1,\dots , y_4$ (see \eqref{y}).
\item The map $\pi_{2}^*$ annichilates $N$, and acts on  $\pi_{2*}W\subset \pi_{2*}\Lambda_{\mathrm{K3}}$ as\\
\begin{center}\xymatrix@R=0.01pc @C=0.05pc{
{\pi_{2}}^*: &D_4^{\oplus 2}  \ar@{}[r]|(.4){\oplus} &A_1(2)^{\oplus 2}  \ar@{}[r]|(.6){\oplus} &\langle -8\rangle \ar@{}[r]|(.35){\oplus} &{\begin{bmatrix}
8 & 0 &  0 \\
0 & 4 &  0 \\
0 & 0 &  4\end{bmatrix}} &\ \ar[rr] &\ &{\quad D_4^{\oplus{4}}} \ar@{}[r]|{\oplus} &A_1^{\oplus 2}  \ar@{}[r]|{\oplus} &\langle -4\rangle \ar@{}[r]|(.35){\oplus} &{\begin{bmatrix}
4 & 0 &  0 \\
0 & 2 &  0 \\
0 & 0 &  2\end{bmatrix}}\\
\ &{\begin{matrix}\hat e_1, \hat f_1\\ \hat e_2, \hat f_2\\ \hat e_3, \hat f_3\\ \hat e_4, \hat f_4\end{matrix}} &{\hat a_1, \hat a_2} &\hat\sigma &{\hat\rho,\ \hat\omega_1, \ \hat\omega_2} &\ \ar@{|->}[rr] &\ &{\begin{matrix}e_1+g_1, f_1+h_1\\ e_2+g_2, f_2+h_2\\ e_3+g_3, f_3+h_3\\ e_4+g_4, f_4+h_4\end{matrix}}  &{2a_1, 2a_2}  &2\sigma &{2\rho,2\omega_1,2\omega_2}
}\end{center}
Its action can be extended to $\pi_{2*}\Lambda_{\mathrm{K3}}$ adding the following elements (and their respective image to the image lattice):
$\hat\alpha=(\hat\rho+\hat\sigma)/4+(\hat e_1+\hat e_2+\hat f_1+\hat f_4)/2,\ 
\hat\beta=(\hat\rho-\hat\sigma)/4+(\hat e_1+\hat e_4+\hat f_1+\hat f_2)/2,\ 
\hat\varepsilon=\hat\rho/2+\hat f_2+\hat f_4+(\hat a_1+\hat a_2)/2,\ 
\hat\chi=(\hat\rho+\hat\sigma)/2,\ 
\hat\zeta=(\hat e_1+\hat f_1+\hat e_4+\hat f_2+\hat a_1+\hat\omega_1)/2,\ 
\hat\eta=(\hat e_1+\hat f_1+\hat e_4+\hat f_2+\hat a_2+\hat\omega_2)/2$;
to extend the action to $H^2(\tilde Z, \mathbb Z)$, add also $z_1,\dots , z_6$ (see \eqref{z}).
\item Recall that $H^2(\tilde Y, \mathbb Z)$ as an overlattice of finite index of $\widehat{\pi_{2}}_*H^2(\tilde Z, \mathbb Z)\oplus N_Y$. The lattice $N_Y\subseteq H^2(\tilde Y,\mathbb Z)$ is annichilated by $\widehat{\pi_{2}}^*$, and for the generators of $\widehat{\pi_{2}}_*H^2(\tilde Z, \mathbb Z)$ the map $\widehat{\pi_{2}}^*$ is defined as follows:
\begin{align*}
&\widehat{\pi_{2}}^*\overline e_i=\hat e_i+\hat f_i\ \mathrm{(for}\ i=1,\dots , 4,\ \mathrm{but\ }\overline e_2\ \mathrm{is\ not\ needed\ as\ generator)},\\
&\widehat{\pi_{2}}^*\overline\zeta=\hat e_1+\hat f_1+(\hat e_2+\hat f_2+\hat e_4+\hat f_4+\hat a_1+\hat a_2+2\hat\omega_1)/2,\\
&\widehat{\pi_{2}}^*\overline\eta=\hat e_1+\hat f_1+(\hat e_2+\hat f_2+\hat e_4+\hat f_4+\hat a_1+\hat a_2+2\hat\omega_2)/2,\\
&\widehat{\pi_{2}}^*\overline\alpha=\hat\alpha+\hat\beta,\quad \widehat{\pi_{2}}^*\overline a=\hat a_1+\hat a_2,\quad \widehat{\pi_{2}}^*\overline\chi=\widehat{\pi_{2}}^*\overline\rho/2=\hat\rho,\\
&\widehat{\pi_{2}}^*\widehat{\pi_{2}}_*n_6=2n_6,\quad \widehat{\pi_{2}}^*\widehat{\pi_{2}}_*\nu=2\nu,\\
&\widehat{\pi_{2}}^*\widehat{\pi_{2}}_*z_1=(\hat\alpha+\hat\beta+\hat e_1+\hat f_1+\hat e_2+\hat f_2+\hat a_1+\hat a_2+\widehat{\pi_{2}}^*\overline\eta+2n_2+n_5+n_8)/2,\\
&\widehat{\pi_{2}}^*\widehat{\pi_{2}}_*z_2=(\hat\rho+\hat e_2+\hat f_2+\hat e_4+\hat f_4+\hat a_1+\hat a_2+n_3+n_4+n_5+n_8)/2,\\
&\widehat{\pi_{2}}^*\widehat{\pi_{2}}_*z_5=(\hat\alpha+\hat\beta+\hat e_1+\hat f_1+\hat e_2+\hat f_2+\hat\rho+\widehat{\pi_{2}}^*\overline\eta+2n_6+n_5+n_8)/2+\hat\rho,\\
&\widehat{\pi_{2}}^*\widehat{\pi_{2}}_*z_6=(\hat\alpha+\hat\beta+\hat e_1+\hat f_1+\hat e_4+\hat f_4+\hat\rho+\widehat{\pi_{2}}^*\overline\zeta+2n_7+n_5+n_8)/2+\\
&\qquad \qquad \ +\hat a_1+\hat a_2+\hat e_2+\hat f_2+\hat\rho.
\end{align*}
Notice that $\widehat{\pi_{2}}^*\widehat{\pi_{2}}_*z_i=z_i+\hat\tau^*z_i$, and that to obtain the whole image of $\widehat{\pi_{2}}^*$ the images of the elements $y'_i$ of \eqref{y'} are also to be considered.
\end{enumerate}
\end{proposition}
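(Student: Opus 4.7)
The plan is to use the standard push-pull formula for a finite quotient map. If a cyclic group $G$ acts symplectically on $X$ with quotient map $q: X \to X/G$ and $\pi: X \dashrightarrow \widetilde{X/G}$ denotes the rational factorisation through the minimal resolution, then for every $u \in H^2(X, \mathbb{Z})$ one has
\[
\pi^* \pi_* u \;=\; \sum_{\sigma \in G} \sigma^* u,
\qquad \pi_* \pi^* \;=\; |G| \cdot \mathrm{id}
\]
on the image of $\pi_*$. Combined with the explicit descriptions of $\tau^*$ (hence of $(\tau^2)^*, (\tau^3)^*$ and of the induced involution $\hat\tau^*$ on $\tilde Z$) given in Section \ref{marking}, this identity prescribes $\pi_4^*,\ \pi_2^*,\ \widehat{\pi_2}^*$ uniquely on the respective images $\pi_{4*} \Lambda_{\mathrm{K3}},\ \pi_{2*} \Lambda_{\mathrm{K3}},\ \widehat{\pi_2}_* H^2(\tilde Z, \mathbb{Z})$.

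Parts (1) and (2) then reduce to applying the formula $\sum_\sigma \sigma^* u$ to each of the $\mathbb{Q}$-generators of $\Lambda_{\mathrm{K3}}$: first to the generators of $W$ in \eqref{eqn:sublattice}, where the cycle action of $\tau^*$ recorded in Section \ref{marking} gives precisely the tables asserted in the statement, and then to the extra elements $\alpha, \chi, \zeta, \eta$ of \eqref{eqn:elements}, where the integrality of the images in $H^2(X, \mathbb{Z})$ is automatic because these images are finite sums of integer classes in $\Lambda_{\mathrm{K3}}$. Part (3) proceeds identically via the push-pull for the involution $\hat\tau$ on $\tilde Z$: writing
\[
\widehat{\pi_2}^* \overline{u} \;=\; \pi_{2*} u + \pi_{2*}(\tau^* u)
\]
for $u$ ranging over the generators of $W$ and of $\Lambda_{\mathrm{K3}}$ reproduces the formulas for $\widehat{\pi_2}^* \overline{e_i},\ \widehat{\pi_2}^* \overline\alpha,\ \widehat{\pi_2}^* \overline\zeta,\ldots$, while the stated identity $\widehat{\pi_2}^* \widehat{\pi_2}_* z_i = z_i + \hat\tau^* z_i$ is nothing but the same push-pull applied directly to $z_i \in H^2(\tilde Z, \mathbb{Z})$.

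The main substantive point, and the step I would handle most carefully, is the vanishing of each pullback on the exceptional lattices $M$, $N$, $N_Y$. Geometrically, the rational map $\pi_i$ is an isomorphism away from the finite set $F$ of points with non-trivial stabiliser, and each exceptional lattice is generated by the resolution curves lying over the images of $F$. Factoring $\pi_i$ through the blow-up $\widehat X \to X$ along $F$, the induced map $\widehat X \to \widetilde{X/G}$ is a regular morphism, and the pullback of an exceptional $(-2)$-curve is supported on the blow-up exceptional divisor of $\widehat X \to X$; hence it is annihilated by the pushforward back to $X$ (equivalently, the closure of its preimage under the rational $\pi_i$ is empty). The only care needed here is matching each irreducible component of the resolution with the correct orbit of fixed points, which for the order-$4$ situation requires distinguishing the four points fixed by $\tau$ from the four exchanged pairs fixed only by $\tau^2$, in the bookkeeping already set up in Proposition \ref{immagineGamma} and Remark \ref{Omega2suZ}.

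The remaining work is bookkeeping: extending the three maps from $\pi_{i*}\Lambda_{\mathrm{K3}} \oplus (\text{exceptional lattice})$ to the full overlattices $H^2(\tilde Y, \mathbb{Z})$ and $H^2(\tilde Z, \mathbb{Z})$ requires only that the images of the finitely many gluing generators $y_i, z_i, y'_i$ of \eqref{y}, \eqref{z}, \eqref{y'} lie integrally in $H^2(X, \mathbb{Z})$, respectively $H^2(\tilde Z, \mathbb{Z})$. Since these gluing elements are explicit $\mathbb{Q}$-combinations of classes already treated, this reduces to a mechanical verification which can be carried out one generator at a time; it is the most tedious but entirely routine part of the argument.
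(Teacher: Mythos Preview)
Your proposal is correct and follows essentially the same approach as the paper. The only cosmetic difference is that the paper frames the computation via the adjointness identity $\pi_4^*a\cdot c=a\cdot\pi_{4*}c$ and then reads off each image from intersection numbers, whereas you invoke the equivalent push--pull formula $\pi^*\pi_*u=\sum_{\sigma\in G}\sigma^*u$ directly; both routes produce exactly the same values on the generators of $W$ and both leave the extension to the gluing elements as a routine check.
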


\begin{proof}
We are going to prove only that the map $\pi_4^*$ acts on $\pi_{4*}W$ as stated above; the other cases are similar.\\
Since $\pi_4^*$ and $\pi_{4*}$ are dual maps, $\pi_4^*a=b$ iff $(b\cdot c)_X= (a\cdot\pi_{4*}c)_Y$ for every ${a\in\pi_{4*}W,}\ {c\in W}$: hence $\pi_4^*a\cdot c=a\cdot\pi_{4*}c$.
Take $e_j=(e_j,0,0,0)\in D_4^{\oplus 4}$: then $\pi_4^*\overline e_i\cdot e_j=\overline e_i\cdot \pi_{4*}e_j=\overline e_i\cdot \overline e_j$, but it holds also $\overline e_i\cdot\overline e_j=\overline e_i\cdot \pi_{4*}f_j=\overline e_i\cdot \pi_{4*}g_j=\overline e_i\cdot \pi_{4*}h_j$;
therefore $\pi_4^{*}\overline e_i=e_i+f_i+g_i+h_i$.
Take $a_1=(a_1,0)\in A_1^{\oplus 2}$: then $\pi_4^*\overline a\cdot a_1=\overline a\cdot \pi_{4*}a_1=2(\overline a\cdot \overline a)$ because $\pi_{4*}$ doubles the intersection form on $A_1\oplus A_1$; moreover, $\pi_{4*}a_1=\overline a=\pi_{4*}a_2$, therefore we get $\pi_4^{*}\overline a=2a_1+2a_2$.
Similarly, since $\pi_{4*}$ multiplies by 4 the intersection form of the sublattice of $W$ invariant for the action of $\tau$, we can conclude that $\pi_4^{*}\overline\rho=4\rho_1, \, \pi_4^{*}\overline\omega_1=4\omega_1,\pi_4^{*}\overline\omega_2=4\omega_2$.
\end{proof}

\begin{corollary}
The image of $H^2(\tilde Y, \mathbb Z)$ via the map $\pi_4^{*}$ coincides with the invariant lattice $\Lambda_{\mathrm{K3}}^{\langle\tau\rangle}$ described in Section \ref{Omega4}. In other words, it holds
\[\pi_4^{*}H^2(\tilde Y, \mathbb Z)=\Omega_4^{\perp_{\Lambda_{\mathrm{K3}}}}.\]
Similarly, we obtain:
\begin{align*}
\pi_2^{*}H^2(\tilde Z, \mathbb Z)=\Omega_2^{\perp_{H^2(X, \mathbb Z)}}=\Omega_2^{\perp_{\Lambda_{\mathrm{K3}}}},\\
\widehat{\pi_2}^{*}H^2(\tilde Y, \mathbb Z)=\Omega_2^{\perp_{H^2(\tilde Z, \mathbb Z)}}=\Omega_2^{\perp_{\Lambda_{\mathrm{K3}}}}.
\end{align*}
\end{corollary}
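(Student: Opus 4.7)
The plan is to prove each of the three equalities by two inclusions, using the explicit formulas for the dual maps given in the preceding proposition together with the generators of the invariant lattices computed in Sections \ref{Omega4} and \ref{Y}. The forward inclusions are essentially formal, while the reverse inclusions reduce to finite checks that each listed generator of the target invariant lattice lies in the image.

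For the first identity $\pi_4^{*}H^2(\tilde Y,\mathbb Z)=\Omega_4^{\perp_{\Lambda_{\mathrm{K3}}}}=\Lambda_{\mathrm{K3}}^{\langle\tau\rangle}$, I would first check the inclusion $\subseteq$. By adjointness, for any $a\in H^2(\tilde Y,\mathbb Z)$ and $x\in\Omega_4$ one has $(\pi_4^{*}a,x)=(a,\pi_{4*}x)$. The push-pull identity gives $\pi_4^{*}\pi_{4*}x=\sum_{k=0}^{3}(\tau^{k})^{*}x$; since $\Omega_4\otimes\mathbb Q$ is the orthogonal complement of $\Lambda_{\mathrm{K3}}^{\langle\tau\rangle}\otimes\mathbb Q$, the $\tau^{*}$-invariant projection of $x$ vanishes, so the sum is zero. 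Injectivity of $\pi_4^{*}$ on cohomology with $\mathbb Q$-coefficients (valid because $\pi_4$ is generically finite) then forces $\pi_{4*}x=0$, hence $\pi_4^{*}a\in\Omega_4^{\perp}=\Lambda_{\mathrm{K3}}^{\langle\tau\rangle}$.

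For the reverse inclusion one uses the explicit description. The generators $\kappa_i=e_i+f_i+g_i+h_i$ for $i=1,\dots,4$ are hit directly by $\pi_4^{*}\overline e_i$, and $\pi_4^{*}\overline\rho=4\kappa_6$, $\pi_4^{*}\overline\omega_j=4\kappa_{6+j}$; the class $\kappa_5=a_1+a_2$ is obtained as $\pi_4^{*}y_4$, since the $\tilde m^{j}$ components of $y_4$ lie in $M$ and are annihilated, leaving $\pi_4^{*}(\overline a/2)=\kappa_5$. The three remaining half-integer generators of $\Lambda_{\mathrm{K3}}^{\langle\tau\rangle}$, namely $(\kappa_2+\kappa_4)/2$, $(\kappa_5+\kappa_7+\kappa_8)/2$ and $(\kappa_1+\kappa_4+\kappa_6+\kappa_7+\kappa_8)/2$, are produced as explicit $\mathbb Z$-combinations of $\pi_4^{*}\overline\alpha$, $\pi_4^{*}\overline\chi$, $\pi_4^{*}\overline\zeta$, $\pi_4^{*}\overline\eta$ and $\pi_4^{*}y_1,\pi_4^{*}y_2,\pi_4^{*}y_3$, after subtracting off the already-obtained integer generators. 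This yields a generating set for $\Lambda_{\mathrm{K3}}^{\langle\tau\rangle}$.

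The two remaining equalities follow the same template. For $\pi_2^{*}$, the containment in $\Omega_2^{\perp}$ is the statement $\pi_2^{*}\pi_{2*}x=x+(\tau^{2})^{*}x=0$ on $\Omega_2\subset H^2(X,\mathbb Z)$, and the reverse inclusion is verified on the explicit generators of $\Lambda_{\mathrm{K3}}^{\langle\tau^{2}\rangle}$ using part (2) of the proposition together with the gluing relations \eqref{z}. For $\widehat{\pi_2}^{*}$, one uses that the sublattice of $H^2(\tilde Z,\mathbb Z)$ co-invariant for $\hat\tau^{*}$ is the copy of $\Omega_2$ identified at the end of Section \ref{Y} (spanned by the relevant differences of $n_j$'s and of $\widehat{\pi_2}_{*}$-images), and the reverse inclusion is read off part (3) of the proposition combined with the gluings \eqref{y'}. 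The main obstacle is purely book-keeping: one must catalogue which rational linear combinations of the tabulated images are actually integral in the overlattices \eqref{y}, \eqref{y'}, \eqref{z}, so as to exhibit the half- and quarter-integer invariant classes as genuine elements of the image lattice. Once these combinations are assembled, the three equalities reduce to matching finite lists of generators.
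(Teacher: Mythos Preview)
Your approach matches the paper's: both arguments amount to computing $\pi_4^{*}$ on a full list of generators of $H^2(\tilde Y,\mathbb Z)$ (the $\overline e_i,\overline a,\overline\rho,\overline\omega_j,\overline\alpha,\overline\chi,\overline\zeta,\overline\eta$ together with the gluing classes $y_1,\dots,y_4$) and checking that the resulting list spans exactly the invariant lattice described in Section~\ref{Omega4}, and similarly for the other two maps. The paper's proof is simply a terser version of your reverse-inclusion step.

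One correction to your forward-inclusion argument: the claim that $\pi_4^{*}$ is injective on $H^2(\tilde Y,\mathbb Q)$ ``because $\pi_4$ is generically finite'' is false as stated, since $\pi_4^{*}$ annihilates $M\otimes\mathbb Q$ by the preceding proposition. What you actually need is injectivity of $\pi_4^{*}$ restricted to $\pi_{4*}H^2(X,\mathbb Q)=M^{\perp}\otimes\mathbb Q$, which does hold (there $\pi_4^{*}$ agrees with $q_4^{*}$ and $q_4$ is a genuine finite morphism). A cleaner route avoids this entirely: since $\pi_4\circ\tau=\pi_4$ as rational maps, one has $\tau^{*}\circ\pi_4^{*}=\pi_4^{*}$, so the image of $\pi_4^{*}$ lies in $\Lambda_{\mathrm{K3}}^{\langle\tau\rangle}=\Omega_4^{\perp}$ immediately. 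The analogous one-line argument works for $\pi_2^{*}$ and $\widehat{\pi_2}^{*}$.
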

\begin{proof}
It holds $\pi_4^{*}\overline e_i=e_i+f_i+g_i+h_i$ for $i=1,\dots , 4$, $\pi_4^{*}\overline a=2(a_1+a_2)$, $\pi_4^{*}\overline\rho=4\rho$, $\pi_4^{*}\overline\omega_1=4\omega_1$, $\pi_4^{*}\overline\omega_2=4\omega_2$; however, these elements generate only a sublattice of finite index of $\pi_4^{*}H^2(\tilde Y, \mathbb Z)$: adding as generators the images via $\pi_4^{*}$ of the elements $\overline\alpha, \overline\chi, \overline\zeta, \overline\eta$ and $y_1,\dots , y_4$, we obtain the whole invariant lattice for the action of $\tau$ on $\Lambda_{\mathrm{K3}}$.
\end{proof}

\subsection{Characterization of the surface $\tilde Z$: the non-projective case} 

Nikulin's seminal work \cite{Nikulin2} provides a lattice theoretic characterization of K3 surfaces $X$ that admit a symplectic action of a cyclic group $G=\mathbb Z/n\mathbb Z$ (for $n=2,\dots , 8$), and of surfaces $\tilde Y$ that are the resolution of the quotient $X/G$, by providing a relation between some (even, negative definite) lattices $\Omega_G$ and $M_G$ that have to be primitively embedded in their respective Néron-Severi lattices; we want to show that similarly, in the case $G=\mathbb Z/4\mathbb Z$ (generated by an automorphism $\tau$ of $X$), the lattice $\Gamma$ characterizes the surface $\tilde Z$ that is the resolution of $Z\coloneqq X/\tau^2$. For simplicity, we are going to state our result for the most general K3 surface $X$: in this case, both $NS(X)=\Omega_4$ and $ NS(\tilde Y)=M$ have rank 14.

\begin{theorem}\label{Znonproj}
Let $\tilde Z$ be a K3 surface such that $rk(NS(\tilde Z))=14$. There exists a pair $(X,\tau)$ where $X$ is a K3 and $\tau$ is a symplectic automorphism of order 4 such that $\tilde Z$ is birationally equivalent to the resolution of the quotient $X/\tau^2$ if and only if $NS(\tilde Z)=\Gamma$ (see Def. \ref{def:Gamma}).
\end{theorem}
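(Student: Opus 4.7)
The plan is to prove both implications by explicit manipulation of the lattice maps $\pi_{2*}$ and $\pi_2^*$ described in Sections \ref{sec:maps} and \ref{dualmaps}. For the necessity direction, assume $\tilde Z$ is the minimal resolution of $X/\tau^2$. By Section \ref{sec:maps}, $NS(\tilde Z) \supseteq \pi_{2*}NS(X) \oplus N$, where $N$ is the Nikulin lattice of the exceptional curves. The pushforward $\pi_{2*}$ kills the $\tau^2$-anti-invariant part of $H^2(X,\mathbb Z)$ and is injective on the invariant part, so $\mathrm{rk}(\pi_{2*}NS(X)) = \mathrm{rk}(NS(X)^{\tau^2})$. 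The rank-$14$ hypothesis on $NS(\tilde Z)$ forces $\mathrm{rk}(NS(X)^{\tau^2}) = 6$, and since $\Omega_4 \hookrightarrow NS(X)$ is primitive of rank $14$ with $\tau^2$-invariant part $R$ of rank $6$, this compels $NS(X) = \Omega_4$ and $\pi_{2*}NS(X) = \hat R$. Therefore $NS(\tilde Z)$ is the primitive closure of $\hat R \oplus N$ in $H^2(\tilde Z,\mathbb Z)$. Using the overlattice presentation of $H^2(\tilde Z,\mathbb Z)$ over $\pi_{2*}H^2(X,\mathbb Z) \oplus N$ via the glue classes $z_1,\ldots,z_6$ of \eqref{z}, one checks that the integer classes of $H^2(\tilde Z,\mathbb Z)$ landing in $(\hat R \oplus N)\otimes\mathbb Q$ are generated by $\hat R$, $N$ and precisely the elements $x_1, x_2$ of \eqref{incNR}, so the primitive closure equals $\Gamma$. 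Since $\Gamma$ is already primitively embedded in $\Lambda_{\mathrm{K3}}$ (by the remark following Definition \ref{def:Gamma}, with explicit orthogonal complement $S$), any lattice of rank $14$ sandwiched between $\Gamma$ and its primitive closure in $H^2(\tilde Z, \mathbb Z)$ must equal $\Gamma$, yielding $NS(\tilde Z) = \Gamma$.

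For sufficiency, assume $NS(\tilde Z) = \Gamma$. I first check that $N$ is primitively embedded in $\Gamma$: both glue classes $x_1, x_2$ contain a non-trivial $\hat R$-component, so no integer combination of $\Gamma$-generators produces a torsion element in $\Gamma/N$. Applying Morrison's characterization of symplectic involutions on K3 surfaces (\cite{Morrison}, Thm. 5.7), the primitive embedding $N \hookrightarrow NS(\tilde Z)$ yields a K3 surface $X$ with a symplectic involution $\iota$ such that $\tilde Z$ is birationally equivalent to the minimal resolution of $X/\iota$. To upgrade $\iota$ to the square of a symplectic automorphism $\tau$ of order $4$, it suffices by (\cite{Nikulin2}, Thm. 4.15) to exhibit a primitive embedding $\Omega_4 \hookrightarrow NS(X)$. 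The pullback $\pi_2^*$ of Section \ref{dualmaps} sends $\Gamma$ into $NS(X)$: it annihilates $N$, maps $\hat R$ into $\Omega_2^{\perp_{NS(X)}}$, and produces via $\pi_2^* x_1, \pi_2^* x_2$ the glue classes needed to combine $\pi_2^* \hat R$ with the co-invariant lattice $\Omega_2 \subseteq NS(X)$ furnished by Morrison's construction. A direct comparison of rank, discriminant group and discriminant form shows that the overlattice of $\Omega_2 \oplus \pi_2^* \hat R$ thus obtained inside $NS(X)$ is isomorphic to $\Omega_4$, and primitivity follows from matching its orthogonal complement with $T(X)$, whose invariants can be computed from $T(\tilde Z)$ via $\pi_2^*$.

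The main obstacle in both directions is the explicit glue bookkeeping. In necessity, one has to verify that the primitive closure of $\hat R \oplus N$ inside $H^2(\tilde Z, \mathbb Z)$ is genuinely no larger than $\Gamma$, which amounts to inspecting each of the relations \eqref{z} and ruling out further integer representatives landing in $(\hat R \oplus N) \otimes \mathbb Q$. In sufficiency, the delicate step is matching the discriminant form of the lattice assembled from $\Omega_2$, $\pi_2^* \hat R$ and the images $\pi_2^* x_i$ with the abstract discriminant form of $\Omega_4$ from Section \ref{Omega4}, and then invoking the uniqueness criteria of Section \ref{sec:lattices} to upgrade an abstract isomorphism to a primitive embedding inside $NS(X)$. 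Both steps reduce to routine but lengthy manipulations with the explicit generators recorded earlier.
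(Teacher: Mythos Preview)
Your necessity argument elaborates what the paper dispatches with ``by construction'' (Section~\ref{sec:resolutionZ}); this part is fine.

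For sufficiency, your route diverges from the paper's. The paper does not lift $\tilde Z$ to $X$ via the primitive embedding $N\hookrightarrow\Gamma$; instead it uses the primitive embedding $\Omega_2\hookrightarrow\Gamma$ of Remark~\ref{Omega2suZ} to quotient \emph{downwards}: the induced involution $\hat\tau$ on $\tilde Z$ gives $\widetilde{\tilde Z/\hat\tau}$ with N\'eron--Severi lattice $M$ (Corollary~\ref{MinquozienteZ}), and then Nikulin's characterization of $M$-polarized K3 surfaces produces $(X,\tau)$ with $\widetilde{X/\tau}\simeq\widetilde{\tilde Z/\hat\tau}$. The paper's approach exploits the full tower $X\to\tilde Z\to\tilde Y$ and the already-established computation of $\widehat{\pi_2}_*\Gamma\subset M$.

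Your approach has a genuine gap. Having realized $\tilde Z$ as $\widetilde{X/\iota}$ via Nikulin's criterion for $N$, you claim that exhibiting a primitive embedding $\Omega_4\hookrightarrow NS(X)$ ``upgrades $\iota$ to the square of a symplectic automorphism $\tau$ of order~4''. But \cite{Nikulin2}, Thm.~4.15 only produces \emph{some} order~4 symplectic automorphism $\tau$ on $X$; it does not assert $\tau^2=\iota$. Without this, $\widetilde{X/\tau^2}$ need not be $\tilde Z$. To close the gap you would have to show that the specific $E_8(2)$ you obtain as the co-invariant lattice of $\iota$ coincides (inside $NS(X)$) with the $\Omega_2\subset\Omega_4$ determined by $\tau^2$, so that the two involutions induce the same isometry on $H^2(X,\mathbb Z)$ and hence agree by Torelli. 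This requires either a uniqueness statement for primitive embeddings $E_8(2)\hookrightarrow\Omega_4$ up to $O(\Omega_4)$, or a direct check that the $\Omega_4$ you assemble is built around the given $\Omega_2$ and carries an order~4 isometry squaring to $-\mathrm{id}$ on exactly that sublattice. Neither is supplied.

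There is also a smaller issue with your glue argument: since $\pi_2^*$ annihilates $N$ and has image contained in $\Omega_2^{\perp}$ (see the corollary in Section~\ref{dualmaps}), the elements $\pi_2^*x_1,\pi_2^*x_2$ lie entirely in $\Omega_2^{\perp}$ and cannot glue $\Omega_2$ to $\pi_2^*\hat R$. The glue classes realizing $\Omega_4$ as an overlattice of $\Omega_2\oplus R$ are not in the image of $\pi_2^*$; you would have to locate them directly inside $NS(X)$.
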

\begin{proof}
The ``only if'' is true by construction (see Section \ref{sec:resolutionZ}). Conversely,
suppose $NS(\tilde Z)=\Gamma$. The embedding $\Omega_2\subset\Gamma$ described in Remark \ref{Omega2suZ} defines a symplectic involution $\hat\tau$ on $\tilde Z$, and the Néron-Severi lattice of the resolution $\widetilde{\tilde Z/\hat\tau}$ is naturally a copy of $M$, as proved in Corollary \ref{MinquozienteZ}; therefore, by the results of Nikulin the surface $\widetilde{\tilde Z/\hat\tau}$ is the resolution of the quotient of a K3 surface $X$ for a symplectic automorphism $\tau$ of order 4, and it holds $NS(X)=\Omega_4$. The action of $\tau$ on $\Omega_4$ naturally defines a copy of $\Omega_2\subset\Omega_4$ by $\Omega_2=(\Omega_4^{\tau^2})^{\perp_{\Omega_4}}$, as described in Section \ref{Omega4}; taking the quotient map $\pi_{2}: X\rightarrow X/\tau^2$ and the resolution $\widetilde{X/\tau^2}$, it holds $NS(\widetilde{X/\tau^2})\simeq NS(\tilde Z)$.
\end{proof}

\section{Projective K3 surfaces with a symplectic automorphism of order 4 and their quotients}\label{sec:proj}

It was already known by Nikulin that the correspondence between surfaces $X$ that admit a symplectic action of an abelian group $G$, and surfaces $\tilde Y$ that are the resolution of $X/G$, is actually a moduli spaces correspondence (\cite{Nikulin2}, Prop. 2.9); the same idea was later generalized to the non-abelian case by Whitcher (\cite{Whitcher}, \S 3).\\
We can therefore refine the characterization of $X$, $\tilde Z$ and $\tilde Y$ by their Néron-Severi to the projective case.
The approach we follow mimics the one used in \cite{VGS},\cite{GS} for symplectic involutions, and in \cite{GP} for symplectic automorphisms of order 3. 

\begin{definition}[\cite{Dolgachev}.\S 1]\label{def:latticepolarized}
Let $\Lambda$ be an even lattice. A K3 surface $X$ is \emph{$\Lambda$-polarized} if there is a primitive embedding $\Lambda\hookrightarrow NS(X)$. The moduli space of $\Lambda$-polarized K3 surfaces has dimension $20-rk(\Lambda)$.
\end{definition}

\begin{proposition}[see \cite{Nikulin2}, Prop. 2.9; also \cite{VGS}, Prop. 2.2]\label{condizione_soprareticoli}
Projective K3 surfaces $X$ that admit a symplectic action of an abelian group $G$ are polarized with a lattice of rank $1+rk(\Omega_G)$ that contains primitively both the lattice $\Omega_G$ and an ample class $L$ of square $2d$, and projective K3 surfaces that are the resolution of $X/G$ are polarized with a lattice of rank $1+rk(M_G)$ that contains primitively both the lattice $M_G$ and an ample class $L$ of square $2d$. \\
Moreover, from Theorem \ref{Znonproj} we deduce that in the projective case $\tilde Z$ is polarized with a lattice of rank 15 that contains primitively both the lattice $\Gamma$ and an ample class $L$ of square $2d$. 
\end{proposition}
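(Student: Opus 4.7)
The proposition combines three assertions, two of which reduce to well-established results of Nikulin (and the refinement in \cite{VGS}), while the third follows from the newly established Theorem \ref{Znonproj}. The plan is to leverage the non-projective characterisation in each of the three cases and then observe what changes under the additional hypothesis of projectivity.

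First, for $X$: by Nikulin's Thm.~4.15 (recalled at the start of Section \ref{Omega4}), a K3 surface $X$ admits a symplectic action of the abelian group $G$ if and only if there is a primitive embedding $\Omega_G\hookrightarrow NS(X)$. Under the additional assumption that $X$ is projective, $NS(X)$ contains an ample class $L$, with $L^2=2d>0$. Since $\Omega_G$ is negative definite (it is the co-invariant lattice of a finite group action on $\Lambda_{\mathrm{K3}}$), $L$ cannot lie in $\Omega_G\otimes\mathbb{Q}$; hence $\Omega_G$ and the rank-one lattice $\langle L\rangle\simeq\langle 2d\rangle$ span a sublattice of rank $1+rk(\Omega_G)$ in $NS(X)$, whose saturation gives the minimal $\Lambda$-polarisation of $X$. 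Conversely, any K3 surface polarised by a lattice of rank $1+rk(\Omega_G)$ admitting a primitive embedding of both $\Omega_G$ and an ample class is automatically projective (by the presence of $L$) and carries a symplectic $G$-action (by Nikulin). This establishes the first part of the statement.

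The argument for $\tilde Y$ is formally identical, with Nikulin's companion result for the exceptional lattice $M_G$ (see Sections \ref{sec:resolutionZ} and \ref{Y}) replacing Thm.~4.15: $\tilde Y$ is the resolution of $X/G$ if and only if $M_G$ embeds primitively in $NS(\tilde Y)$, and projectivity adds an ample class $L$ of square $2d$; since $M_G$ is also negative definite, one obtains the rank $1+rk(M_G)$ polarisation.

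For $\tilde Z$ the same strategy applies, but now using Theorem \ref{Znonproj} instead of a Nikulin result: that theorem characterises surfaces birational to a minimal resolution of $X/\tau^2$ (in the most general case, $rk\,NS(\tilde Z)=14$) as those satisfying $NS(\tilde Z)=\Gamma$. Since $\Gamma$ is negative definite of rank $14$, an ample class $L$ on a projective $\tilde Z$ lies outside $\Gamma\otimes\mathbb{Q}$, and together they generate a rank-$15$ polarisation admitting primitive embeddings of $\Gamma$ and $\langle 2d\rangle$, as claimed. There is essentially no obstacle here beyond bookkeeping, since the lattice-theoretic input is already in place; the only point requiring care is that the primitive embeddings of $\Omega_G$, $M_G$ and $\Gamma$ remain primitive after saturating with the ample class, which is automatic because the ample class pairs non-trivially with $\Omega_G\otimes\mathbb{Q}$ only on vectors outside the sublattice itself.
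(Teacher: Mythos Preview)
The paper does not supply a proof of this proposition: it is stated as a citation of \cite{Nikulin2}, Prop.~2.9 and \cite{VGS}, Prop.~2.2, with the $\Gamma$-clause appended as an immediate corollary of Theorem~\ref{Znonproj}. Your argument is therefore not competing with a proof in the text but filling in the reasoning behind the citation, and in outline it does so correctly.

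One point you gloss over is why the class $L$ may be taken \emph{orthogonal} to $\Omega_G$ (resp.\ $M_G$, $\Gamma$); this is what makes Remark~\ref{cor_condizione_soprareticoli} true, since there the admissible lattices are exactly $\Omega_G\oplus\langle 2d\rangle$ and its cyclic overlattices. For $\Omega_G$ the reason is an averaging argument: if $L$ is ample then $\sum_{g\in G}g^*L$ is still ample and $G$-invariant, hence lies in $\Omega_G^\perp$. For $M_G$ and $\Gamma$ the class orthogonal to the exceptional curves is only nef and big (it has zero intersection with the exceptional locus), which is the standard convention here. Finally, your last sentence---intended to justify that $\Omega_G$ stays primitive after saturating with $L$---is garbled; the correct argument is simply that $\Omega_G$ is already primitive in $NS(X)$, so it is a fortiori primitive in any intermediate sublattice.
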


\begin{remark}\label{cor_condizione_soprareticoli}
Let $S$ be either $\Omega_G$ or $M_G$. The only lattices that satisfy the proposition above are $S\oplus\langle 2d\rangle$ and its cyclic overlattices of finite index (see \cite{GS1}, Prop. 6.1). What has to be determined is which of these actually exist, and wether the existing ones admit (one or more non-isomorphic) primitive embeddings in $\Lambda_{\mathrm{K3}}$: with each admissible lattice, and each non-isomorphic primitive embedding, we select a different irreducible component of the moduli space of projective K3 surfaces $X$ that admit a symplectic action of $G$ (if $S=\Omega_G$) or $\tilde Y$ that are the minimal resolution of $X/G$ (if $S=M_G$); the existence of a bijection between 
the irreducible components of the moduli spaces of $X$ and $\tilde Y$ is already known for $G=\mathbb Z/2\mathbb Z,\mathbb Z/3\mathbb Z$ (\cite{VGS}, \cite{GP}).
\end{remark}
\begin{remark}
If $G=\mathbb Z/4\mathbb Z$, we have to study $S=\Omega_G,\ M_G,\ \Gamma$. 
Recall also from Definition \ref{def:latticepolarized} that the moduli spaces of projective K3 surfaces $X$ that admit a  symplectic automorphism $\tau$ of order 4, and projective K3 surfaces are the resolution of $X/\tau^2$ or $X/\tau$ all have dimension 5.
\end{remark}

\begin{notation}
Consider the lattice $S\oplus \langle k \rangle$, where $S$ is one between $\Omega_4$, $\Gamma$ and $M$, and $\langle k \rangle$ is an even positive definite lattice of rank 1 and intersection matrix $[k]$.\\
Denote $(S\oplus \langle k \rangle)'$ and $(S\oplus \langle k \rangle)^{\star}$ any cyclic overlattices of $S\oplus \langle k \rangle$ such that it holds:
\[\frac{(S\oplus \langle k \rangle)'}{S\oplus \langle k \rangle}\simeq\frac{\mathbb Z}{2\mathbb Z}, \qquad \frac{(S\oplus \langle k \rangle)^{\star}}{S\oplus \langle k \rangle}\simeq\frac{\mathbb Z}{4\mathbb Z}.\]
\end{notation}

\subsection{Families of projective K3 surfaces with a symplectic automorphism $\tau$\\ of order 4}\label{sec:families4} 

\begin{definition}\label{reldeq}
Consider an even lattice $S$, its group of isometries $O(S)$ and its discriminant group $A_S$ with discriminant form $q_S$. We define two equivalence relations on $A_S$:
\begin{itemize}
\item \emph{by order and square}: two elements $r,s \in A_S$ are in relation ($r\sim_S s$) if they have the same order and square, i.e. $\langle r\rangle=\langle s\rangle=\mathbb Z/k \mathbb Z\subset A_S$ and $q_S(r)=q_S(s)=g\in\mathbb Q/2\mathbb Z$; we will denote the equivalence classes for this relation with the pair $(k,g)$;
\item \emph{by (induced) isometry}: two elements $r,s \in A_S$ are in relation ($r\approx_S s$) if there exists an isometry $\overline\varphi\in O(A_S)$ induced by an isometry $\varphi\in O(S)$ such that $\overline\varphi(r)=s$; we will denote the equivalence classes for this relation with the triple $(k,g,n)$, where $k,g$ are as above, and $n$ is the cardinality of the class (in our case, this is sufficient to uniquely identify each of them).
\end{itemize}
\end{definition}

\begin{proposition}\label{classi}
The relation $\sim_{\Omega_4}$ divides $A_{\Omega_4}$ in 7 non-trivial equivalence classes (plus the trivial one $\{0\}$), whose cardinality is displayed below. Each of them corresponds to an equivalence class for $\approx_{\Omega_4}$, except for $(2,1)$ which is the union of two classes: $(2,1,6)$ e $(2,1,10)$.
\emph{\begin{center}\begin{tabular}{c|c|c|c|c}
\diagbox{$k$}{$g$}&
 0 & 1/2 & 1 & 3/2 \\
\hline
2 & 15 & 32 & 16 & 0\\
4 & 240 & 240 & 240 & 240
\end{tabular}\end{center}}
\end{proposition}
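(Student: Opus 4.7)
The plan is to compute $q_{\Omega_4}$ explicitly in coordinates on $A_{\Omega_4}\simeq(\mathbb{Z}/4\mathbb{Z})^4\oplus(\mathbb{Z}/2\mathbb{Z})^2$, enumerate elements by $(\text{order},\text{square})$ to verify the table for $\sim_{\Omega_4}$, exhibit enough explicit isometries of $\Omega_4$ to pin down the $\approx_{\Omega_4}$-orbits, and finally identify an $O(\Omega_4)$-invariant that forces the class $(2,1)$ to split as $6+10$.

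First I would use the presentation of $\Omega_4$ as an overlattice of index $2^4$ of $\Delta$ given in Section \ref{Omega4}, fix explicit representatives in $\Omega_4^*\otimes\mathbb{Q}$ for a set of generators of $A_{\Omega_4}$ (the group has order $1024$, with $64$ elements of order dividing $2$ and $960$ of order exactly $4$), and read off the Gram matrix of $q_{\Omega_4}$ modulo $2\mathbb{Z}$ by direct computation. Partitioning the $63$ non-trivial $2$-torsion and the $960$ order-$4$ elements by the value of $q_{\Omega_4}$ then gives the $\sim_{\Omega_4}$-classes; checking that the cardinalities are $15,32,16$ in the $2$-torsion part and $240$ in each of the four order-$4$ squares is a finite case analysis that I expect to do either by hand (using the symmetric structure of the four $D_4$-blocks) or by a short computer algebra check.

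For $\approx_{\Omega_4}$ I would assemble a sufficient supply of isometries of $\Omega_4$ and track their induced action on $A_{\Omega_4}$. Natural candidates are: the cyclic permutation of the four $D_4$ copies induced by $\tau^*$ itself (sending $\alpha\mapsto\beta\mapsto\gamma\mapsto\delta\mapsto\alpha$); the swap $a_1\leftrightarrow a_2$ (combined with the sign change on $\sigma$ needed to preserve the gluing class $(e_2-g_2+e_4-g_4+a_1-a_2+\sigma)/2$); the sign involution on $\sigma$; and elements of the Weyl group of each $D_4$ factor compatible with the added classes $\alpha-\beta,\alpha-\gamma,\alpha-\delta$. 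Acting with these on a chosen representative of each $\sim_{\Omega_4}$-class other than $(2,1)$, I would check that the orbit exhausts the class, giving the six one-to-one correspondences $(k,g)\leftrightarrow(k,g,n)$ claimed in the statement.

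The hard part, and the only genuinely non-automatic step, is the splitting of $(2,1)$ into orbits of sizes $6$ and $10$. This requires both an upper bound on orbit sizes (obtained by orbit-counting once a stabiliser is computed) and an $O(\Omega_4)$-invariant separating the two kinds of elements. The natural candidate for such an invariant comes from the canonical decomposition $\Omega_4\supset\Omega_2\oplus R$ induced by the $\tau^2$-action (Section \ref{Omega4}): any isometry of $\Omega_4$ must permute $\Omega_2$ and $R$ within their respective $O(\,\cdot\,)$-orbits, so the ``type'' of a $2$-torsion class measured by its projection to $A_{\Omega_2}\oplus A_R$ (equivalently, its pairing with the distinguished isotropic subgroup $\Omega_4/(\Omega_2\oplus R)$) is $\approx_{\Omega_4}$-invariant. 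Verifying by direct inspection in the coordinates above that exactly $6$ of the $16$ elements of $(2,1)$ have one type and the remaining $10$ the other, and then producing isometries realising full transitivity inside each of these two subsets, concludes the proof.
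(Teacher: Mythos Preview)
Your plan for the $\sim_{\Omega_4}$ part is sound and matches the paper: this is a finite enumeration once a Gram matrix for $q_{\Omega_4}$ is fixed. The paper does not do it by hand either; it simply says the classes ``can be computed from a basis of $A_{\Omega_4}$ and its discriminant form''.

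For the $\approx_{\Omega_4}$ part your approach diverges from the paper, and there is a real gap. The paper does \emph{not} produce an $O(\Omega_4)$-invariant to separate the two orbits inside $(2,1)$; it computes a generating set of $O(\Omega_4)$ with the Integral Lattices package in \textsc{Sage} and then runs an explicit orbit computation on $A_{\Omega_4}$. Your proposal instead rests on the claim that the decomposition $\Omega_2\oplus R\subset\Omega_4$ is ``canonical'' and therefore respected by every isometry of $\Omega_4$. That is not justified: the splitting $\Omega_2=(\Omega_4^{\tau^2})^{\perp}$ is canonical only relative to the fixed automorphism $\tau^*$, and an arbitrary $\phi\in O(\Omega_4)$ has no reason to commute with, or even conjugate, $\tau^*$. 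Unless you can prove that $E_8(2)$ sits inside $\Omega_4$ as a \emph{characteristic} sublattice (for instance that it is the unique primitive sublattice of its isometry type, or that every isometry of $\Omega_4$ normalises $\langle\tau^*\rangle$), the ``projection type to $A_{\Omega_2}\oplus A_R$'' is not an $O(\Omega_4)$-invariant and cannot, by itself, force $(2,1)$ to split as $6+10$.

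A secondary concern: even for the classes where you only claim transitivity, the list of isometries you propose (the cyclic shift of the four $D_4$ blocks, the involution on $A_1^{\oplus 2}$, sign on $\sigma$, and $D_4$-Weyl elements) generates only an \emph{a priori} proper subgroup of the image of $O(\Omega_4)$ in $O(q_{\Omega_4})$, and you would still have to check that its orbits on the $960$ order-$4$ elements already have size $240$. This is feasible but is again a machine computation, which is exactly what the paper does. If you want a genuinely conceptual separation of $(2,1,6)$ from $(2,1,10)$, a safer invariant is intrinsic to $A_{\Omega_4}$ alone --- for example, for each $x$ in $(2,1)$ count the elements $y$ of $(2,0)$ with $b_{\Omega_4}(x,y)=0$ in $\mathbb Q/\mathbb Z$ --- rather than one that appeals to a sublattice whose $O(\Omega_4)$-stability you have not established.
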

\begin{proof}
The equivalence classes for $\sim_{\Omega_4}$ can be computed from a basis of $A_{\Omega_4}$ and its discriminant form.
The generators of $O(\Omega_4)$ can be computed using the Integral Lattices package in SAGE \cite{sage}: then, choosing for each of the classes $(k,g)$ of $\sim_{\Omega_4}$ a representative element $x_{(k,g)}$, their orbit for the induced action of $O(\Omega_4)$ on $A_{\Omega_4}$ is computed recursively (see Algorithm I.4 in \cite{Hulpke}). 
\end{proof}

\begin{corollary}\label{rappresentanti}
We give a representative element $x_{(k,g,n)}$ for each non-trivial equivalence class $(k,g,n)$ for the relation $\approx_{\Omega_4}$, in terms of the generators of $\Omega_4$ introduced in Section \ref{Omega4}.
\begin{longtable}{|c|c|}
\hline
class $(k,g,n)$ & representative $x_{(k,g,n)}$\TBstrut\\
\hline
 $(2,0,15)$ & $\frac{3e_3-f_3-g_3-h_3}{2}$\TBstrut\\
\hline
 $(2,1/2,32)$ & $\frac{2e_1-f_1-g_1+e_4-f_4+\alpha-\gamma+\sigma+(e_2-g_2+e_4-g_4+a_1-a_2+\sigma)/2}{2}$\TBstrut\\
\hline
 $(2,1,6)$ & $\frac{\sigma}{2}$\TBstrut\\
\hline
 $(2,1,10)$ &$\frac{\sigma+3e_3-f_3-g_3-h_3}{2}$\TBstrut\\
\hline
 $(4,0,240)$ &$\frac{3(3e_3-f_3-g_3-h_3)+2(2e_1-f_1-g_1+e_4-f_4+\alpha-\gamma) +e_2-g_2+e_4-g_4+a_1-a_2+\sigma}{4}$\TBstrut\\
\hline
 $(4,1/2,240)$ &$\frac{2(e_1-g_1+e_2-g_2)+a_1-a_2+\sigma}{4}$\TBstrut\\
\hline
 $(4,1,240)$ &$\frac{3(3e_1-f_1-g_1-h_1+3\alpha-\beta-\gamma-\delta)+2(e_2-g_2+e_4-g_4+\sigma)}{4}$\TBstrut\\
\hline
 $(4,3/2,240)$ &$\frac{3(3e_1-f_1-g_1-h_1)}{4}$\TBstrut\\
\hline
\end{longtable}
\end{corollary}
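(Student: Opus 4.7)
The plan is to verify, class by class, that each element $x_{(k,g,n)}$ listed in the table represents an element of $A_{\Omega_4}$ belonging to the orbit labelled by $(k,g,n)$; since Proposition \ref{classi} has already enumerated all orbits and given their cardinalities, producing one representative in each orbit is enough to prove the corollary.

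First I would make $\Omega_4^*$ explicit. Recall from Section \ref{Omega4} that $\Omega_4$ is an overlattice of index $2^4$ of the lattice $\Delta$ whose Gram matrix is written there, spanned by $e_i-f_i,\ e_i-g_i,\ e_i-h_i$ ($i=1,\dots,4$), $a_1-a_2$ and $\sigma$, together with the glue vectors $\alpha-\beta$, $\alpha-\gamma$, $\alpha-\delta$ and $(e_2-g_2+e_4-g_4+a_1-a_2+\sigma)/2$. From this presentation the dual $\Omega_4^*$ and a set of generators of $A_{\Omega_4}\simeq(\mathbb Z/4\mathbb Z)^4\oplus(\mathbb Z/2\mathbb Z)^2$ can be written down directly, together with the discriminant form.

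Next, for each row of the table I would verify two things by a direct computation with the intersection form of $\Omega_4$: (i) the element $x_{(k,g,n)}$ indeed lies in $\Omega_4^*$, i.e.\ its intersection with every generator of $\Omega_4$ is integral; (ii) its class in $A_{\Omega_4}$ has order exactly $k$ and discriminant square $g\in\mathbb Q/2\mathbb Z$. By Proposition \ref{classi} each pair $(k,g)$ with $(k,g)\neq(2,1)$ corresponds to a single orbit under $O(\Omega_4)$, so in these six cases the verification of (ii) is sufficient to conclude that $x_{(k,g,n)}$ represents the required orbit.

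The only real obstacle is separating the two orbits $(2,1,6)$ and $(2,1,10)$ that together make up the class $\{r\in A_{\Omega_4}:\mathrm{ord}(r)=2,\ q_{\Omega_4}(r)=1\}$ of cardinality $16$. For this I would use the same computer algebra setup invoked in the proof of Proposition \ref{classi}: the generators of $O(\Omega_4)$ are available through the Integral Lattices package in SAGE \cite{sage}, and applying them to the image of $\sigma/2$ one checks that its orbit has cardinality $6$ (so $\sigma/2$ is the chosen representative of $(2,1,6)$), while the orbit of $(\sigma+3e_3-f_3-g_3-h_3)/2$ has cardinality $10$ and is therefore the representative of $(2,1,10)$; the sum $6+10=16$ matches Proposition \ref{classi}, and the fact that these two orbits are disjoint is automatic from the orbit computation. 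Combining this with the verifications above exhibits one representative in each of the eight equivalence classes for $\approx_{\Omega_4}$.
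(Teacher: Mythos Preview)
Your proposal is correct and matches the paper's approach: the corollary is stated without a separate proof, as the representatives are a direct byproduct of the orbit computation carried out in Proposition \ref{classi} using SAGE, and your verification plan (checking membership in $\Omega_4^*$, order, square, and orbit size for the $(2,1)$ case) is exactly what that computation amounts to.
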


\begin{theorem}\label{NSpossibili}
Let $X$ be a projective K3 surface that admits a symplectic automorphism of order 4, such that $rk(NS(X))=15$. Then, $NS(X)$ is one of the following lattices:
\begin{enumerate}
\item for every $d\in \mathbb N$, $NS(X)=\Omega_4\oplus\langle 2d \rangle$;
\item for $d\neq_4 1$, $NS(X)=(\Omega_4\oplus\langle 2d \rangle)'$; for $d=_4 2$ there are two non isometric possibilities.
\item For $d=_4 0$, $NS(X)=(\Omega_4\oplus\langle 2d \rangle)^{\star}$.
\end{enumerate}
\end{theorem}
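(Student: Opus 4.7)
The plan is to reduce to classifying overlattices of $\Omega_4\oplus\langle 2d\rangle$ via Nikulin's bijection (Theorem~\ref{Nik1}), and then perform a case analysis on the order of the isotropic gluing element.

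First, by Proposition~\ref{condizione_soprareticoli} and the rank-$15$ assumption, $NS(X)$ contains $\Omega_4$ primitively, has signature $(1,14)$, and admits a primitive ample class $L$ of square $2d$. Since $\Omega_4$ is negative definite of rank $14$, its orthogonal complement in $NS(X)$ is positive-definite of rank $1$, hence equal to $\langle L\rangle\cong\langle 2d\rangle$ after scaling $L$ to be primitive. So $NS(X)$ is a finite-index overlattice of $\Omega_4\oplus\langle 2d\rangle$ with both summands primitive, and by Theorem~\ref{Nik1} it corresponds to an isotropic subgroup $H\subseteq A_{\Omega_4}\oplus\mathbb{Z}/2d\mathbb{Z}$. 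Primitivity of $\Omega_4$ forces the projection $H\to\mathbb{Z}/2d\mathbb{Z}$ to be injective (otherwise a nonzero $(x,0)\in H$ would place an element of $\Omega_4^*\smallsetminus\Omega_4$ inside $NS(X)$), so $H$ is cyclic of order $k\mid 2d$. Since $A_{\Omega_4}$ has exponent $4$, one has $k\in\{1,2,4\}$. Primitivity of $\langle 2d\rangle$ dually forces $H\to A_{\Omega_4}$ to be injective; a violation of this would merely re-present a case-$(1)$ overlattice $\Omega_4\oplus\langle 2d'\rangle$ for a smaller value $d'$. So I may write $H=\langle(x,a)\rangle$ with $\mathrm{ord}(x)=\mathrm{ord}(a)=k$, and by Corollary~\ref{corollario} the isomorphism class of the resulting overlattice is detected by the $O(\Omega_4)$-orbit of $x$ (the allowed $a$ being unique up to its sign, corresponding to the isometry $-\mathrm{id}$ of $\langle 2d\rangle$).

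The three cases of the statement now correspond to $k=1,2,4$. For $k=1$, $H=0$ works for every $d$, yielding case $(1)$. For $k=2$, the unique $2$-torsion element $a=d\in\mathbb{Z}/2d\mathbb{Z}$ has discriminant value $d/2\bmod 2\mathbb{Z}$, and isotropy demands a $2$-torsion $x$ with $q_{\Omega_4}(x)\equiv -d/2$. By Proposition~\ref{classi}, the attainable $2$-torsion squares are $\{0,1/2,1\}$ and never $3/2$; a residue analysis gives no solution for $d\equiv 1\bmod 4$, a single orbit for $d\equiv 0$ or $3\bmod 4$, and the two orbits $(2,1,6)$, $(2,1,10)$ for $d\equiv 2\bmod 4$ — exactly case $(2)$. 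For $k=4$, the existence of an order-$4$ element $a\in\mathbb{Z}/2d\mathbb{Z}$ requires $d$ even, and then $q(a)=d/8\bmod 2\mathbb{Z}$; since order-$4$ elements of $A_{\Omega_4}$ have only half-integral squares, isotropy forces $d\equiv 0\bmod 4$. In that range Proposition~\ref{classi} together with Corollary~\ref{rappresentanti} supplies a single $O(\Omega_4)$-orbit in each of the four classes $(4,g)$ with $g\in\{0,1/2,1,3/2\}$, producing the unique overlattice $(\Omega_4\oplus\langle 2d\rangle)^\star$ of case $(3)$.

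The main technical input is Proposition~\ref{classi}, whose orbit count relies on an explicit (computer-aided) determination of $O(\Omega_4)$ acting on $A_{\Omega_4}$; with it in hand, the remainder is the elementary matching of the isotropy constraint $q(x)+q(a)\equiv 0\bmod 2\mathbb{Z}$ against the available orbit classes. The subtlest point is the primitivity bookkeeping in the second paragraph: I must carefully verify that the apparently extra isotropic subgroups violating $H\to A_{\Omega_4}$ injectivity yield no genuinely new overlattices, only re-parametrizations of case $(1)$ for a smaller polarization degree.
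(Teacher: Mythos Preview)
Your approach is essentially the same as the paper's: reduce to classifying cyclic isotropic subgroups of $A_{\Omega_4}\oplus A_{\langle 2d\rangle}$, then read off the admissible orbits from Proposition~\ref{classi}. Your treatment of primitivity (forcing $H$ cyclic with both projections injective) is more carefully spelled out than the paper's, which simply asserts the relevant form of the gluing element.

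There is one small gap worth flagging. For $d\equiv 2\pmod 4$ you conclude directly from the two distinct $O(\Omega_4)$-orbits $(2,1,6)$ and $(2,1,10)$ that the resulting overlattices are \emph{non-isometric}. But Corollary~\ref{corollario} only tells you they are non-isomorphic \emph{as overlattices of} $\Omega_4\oplus\langle 2d\rangle$; a priori an abstract lattice isometry need not respect the distinguished sublattice $\Omega_4$. The paper closes this gap by constructing both lattices explicitly (Example~\ref{esempiL}) and checking that they lie in different genera. You should either do the same, or supply an argument that any isometry of $NS(X)$ must preserve $\Omega_4$ (e.g.\ by characterising it intrinsically within $NS(X)$), before concluding non-isometry.
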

\begin{proof}
Overlattices of index 2 of $\Omega_4\oplus\langle 2d \rangle$ correspond to isotropic elements in $A_{\Omega_4\oplus\langle 2d \rangle}$ of the form $(L+v)/2$, where $L$ generates $\langle 2d \rangle$ and $v\in\Omega_4$. Requiring
\[\bigg(\frac{L+v}{2}\bigg)^2=\frac{d}{2}+\bigg(\frac{v}{2}\bigg)^2=0 \quad \mathrm{in} \quad \frac{\mathbb Q}{2\mathbb Z}\]
we see that to each value of $d$ modulo $4$ corresponds an isometry class of order 2 of the ones described in Proposition \ref{classi}, so for $d=4h+1$ no overlattice of index 2 of $\Omega_4\oplus\langle 2d \rangle$ exists; on the other hand, for $d=4h+2$ there are two equivalence classes for $\approx_{\Omega_4}$ that can be used: they indeed produce two overlattices of $\Omega_4\oplus\langle 2d \rangle$ that are not in the same genus, as can be seen by constructing them explicitely as in Example \ref{esempiL}. \\
We proceed similarly for the overlattices of type $(\Omega_4\oplus\langle 2d \rangle)^{\star}$.
\end{proof}

\begin{theorem}\label{unicita}
Each of the lattices presented in Theorem \ref{NSpossibili} admits a unique primitive embedding in $\Lambda_{\mathrm{K3}}$ up to isometries of $\Lambda_{\mathrm{K3}}$.
\end{theorem}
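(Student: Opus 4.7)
The plan is to apply Theorem \ref{unicitaprimitivi} to each lattice $S$ in Theorem \ref{NSpossibili}, relying on the sufficient conditions collected in Proposition \ref{condizione1} and Corollary \ref{condizione3}. Every such $S$ has signature $(1,14)$, so its orthogonal complement $T$ in $\Lambda_{\mathrm{K3}}$ has signature $(2,5)$ and rank $7$; in particular $T$ is indefinite and satisfies $rk(T)\geq 3$, which is the standing hypothesis of Proposition \ref{condizione1}. Since $q_T=-q_S$ and the underlying abelian groups $A_T$ and $A_S$ coincide, it is enough to inspect the invariant factor decomposition of $A_S$ in each case.

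For $S=\Omega_4\oplus\langle 2d\rangle$ the reduction is immediate: by Section \ref{Omega4}, $A_{\Omega_4}$ has invariant factor decomposition with smallest two invariants equal to $2$ (coming from the $(\mathbb Z/2\mathbb Z)^2$ summand), so adjoining the cyclic group $\mathbb Z/2d\mathbb Z$, regardless of the $2$-adic valuation of $d$, still leaves $d_1=d_2=2$ in the resulting invariant factor list. Thus condition (1) of Proposition \ref{condizione1} applies directly.

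For the overlattices $(\Omega_4\oplus\langle 2d\rangle)'$ and $(\Omega_4\oplus\langle 2d\rangle)^\star$ I would identify each one, via Theorem \ref{Nik1}, with an isotropic subgroup $H$ of $A_{\Omega_4\oplus\langle 2d\rangle}$ of order $2$ or $4$ respectively; explicit generators are furnished by the classes $x_{(k,g,n)}$ of Corollary \ref{rappresentanti} paired with a suitable multiple of $L/k$. The new discriminant group is then $H^\perp/H$, endowed with the induced form. Working modulo $4$ in $d$, I would verify case by case that the invariant factor decomposition of $A_S$ still has $d_1=d_2=2$ --- the bulk of the $(\mathbb Z/2\mathbb Z)^2$ summand of $A_{\Omega_4}$ is not destroyed by the gluing --- and so condition (1) of Proposition \ref{condizione1} continues to apply; in the residual cases in which $d_1$ jumps to $4$, I would fall back on condition (3) ($d_1=d_2=4$), or use condition (2) ($d_1=2$, $d_2=4$, $d_3\equiv_8 4$) if the 2-part is irregular.

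The finicky point, and the main obstacle, is the case $d\equiv_4 2$, in which Theorem \ref{NSpossibili} provides \emph{two} non-isometric overlattices of index $2$: here I must check not merely that both discriminant \emph{groups} give the right invariant factors, but that the induced discriminant \emph{forms} on $H^\perp/H$ --- computed from the two inequivalent isotropic classes $(2,1,6)$ and $(2,1,10)$ of Proposition \ref{classi} --- each satisfy one of the conditions of Proposition \ref{condizione1}. This requires evaluating $q_{\Omega_4}$ on the corresponding representatives in Corollary \ref{rappresentanti}, which is routine but must be done explicitly for each residue of $d$ modulo $4$; once this is in place, Theorem \ref{unicitaprimitivi} uniformly produces uniqueness of the primitive embedding.
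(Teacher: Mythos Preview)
Your strategy coincides with the paper's: a case-by-case application of the numerical criteria from Section~\ref{sec:lattices}. The execution differs in two places. For $S=\Omega_4\oplus\langle 2d\rangle$ the paper splits into $d$ odd (invoking Corollary~\ref{condizione3} via the $(\mathbb Z/2\mathbb Z)^3$ summand) and $d$ even (Proposition~\ref{condizione1}); your uniform appeal to condition~(1) is equally valid and arguably tidier. For the overlattices, however, the paper largely avoids computing invariant factors: for $(\Omega_4\oplus\langle 2d\rangle)'$ with $d\equiv_4 2,3$ and for $(\Omega_4\oplus\langle 2d\rangle)^\star$ it simply observes that $\ell(A_S)=5=22-15-2$ and applies Theorem~\ref{condizione2} directly---a tool you do not invoke. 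Your plan to verify Proposition~\ref{condizione1} instead would still succeed (the $2$-part of $A_T$ in each of these cases is small enough that one of conditions (1)--(3) is forced), but it is more work than necessary and you have not actually carried it out. Only in the case $(\Omega_4\oplus\langle 2d\rangle)'$ with $d\equiv_4 0$ do both you and the paper fall back on Proposition~\ref{condizione1}.

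One minor point: your worry about the two non-isometric overlattices at $d\equiv_4 2$ is overcautious. The hypotheses of Proposition~\ref{condizione1} involve only the invariant factors of the group $A_T$, not the quadratic form $q_T$; once the group structure is known, no separate evaluation of $q_{\Omega_4}$ on the representatives of $(2,1,6)$ versus $(2,1,10)$ is needed to check (1)--(3). The paper sidesteps the issue entirely via the length bound.
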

\begin{proof}
Let $NS(X)=\Omega_4\oplus\langle 2d\rangle$, let $T(X)=NS(X)^{\perp_{\Lambda_{\mathrm{K3}}}}$; it holds $\lambda(A_{T(X)})=\lambda(A_{NS(X)})=(2,2,4,4,4,4, 2d)$ ($(2,2,2,4,4,4,4)$ for $d=1$); if $d$ is odd, then $\mathbb Z/2d\mathbb Z=\mathbb Z/2\mathbb Z\times \mathbb Z/d\mathbb Z$, so we have $A_{NS(X)}=(\mathbb Z/2\mathbb Z)^3\times A'$ and $NS(X)$ satisfies Corollary \ref{condizione3}; if $d$ is even, then $\lambda(A_{T(X)})=(2,2,4,4,4,4, 4d')$, so $T(X)$ satisfies Proposition \ref{condizione1}, and therefore $NS(X)$ satisfies Theorem \ref{unicitaprimitivi}.\\
Let $NS(X)=(\Omega_4\oplus\langle 2d\rangle)'$, let $T(X)=NS(X)^{\perp_{\Lambda_{\mathrm{K3}}}}$: for $d=_4 2,3$ $A_{NS(X)}$ has length 5, so there exists a unique primitive embedding of $NS(X)$ in $\Lambda_{\mathrm{K3}}$ thanks to Theorem \ref{condizione2}; for $d=4d'$, $T(X)$ satisfies Proposition \ref{condizione1}, for $\lambda(A_{T(X)})=(2,2,2,2,4,4, 8d')$; therefore $NS(X)$ satisfies Theorem \ref{unicitaprimitivi}.\\
Let $NS(X)=(\Omega_4\oplus\langle 2d\rangle)^{\star}$: then $A_{NS(X)}$ has length 5, so it satisfies Theorem \ref{condizione2}.
\end{proof}

\begin{example}\label{esempiL} We are going to exhibit a primitive embedding in $\Lambda_{\mathrm{K3}}$ of each of the lattices presented in \ref{NSpossibili}, having fixed the primitive embedding of $\Omega_4$ in $H^2(X,\mathbb Z)$ described in Section \ref{Omega4}, by providing examples of primitive classes  $L\in \Omega_4^{\perp_{H^2(X,\mathbb Z)}}$ such that $L^2=2d$. We use the notation of Section \ref{marking}, and we construct the overlattices using the elements $x_{(k,g,n)}$ in Corollary \ref{rappresentanti}.
\begin{enumerate}
\item For every $d\in\mathbb N\setminus\{0\}$, the class 
\[L_0=L_0(d)=\frac{a_1+a_2+\omega_1+\omega_2}{2}+d\bigg(\frac{-a_1-a_2+\omega_1-\omega_2}{2}\bigg)\]
generates the lattice $\langle 2d\rangle$ such that $\Omega_4\oplus\langle 2d\rangle$ is primitively embedded in $H^2(X,\mathbb Z)$.
\item For $d=4(h-1),\ h\in\mathbb N\setminus\{0,1\}$ the class
\[L_{2,0}(h)=2L_0(h)+e_3+f_3+g_3+h_3\]
generates the lattice $\langle 2d\rangle$ such that $(\Omega_4\oplus\langle 2d \rangle)'$ is primitively embedded in $H^2(X,\mathbb Z)$; $L_{2,0}/2+x_{(2,0,15)}$ is in fact integer in $H^2(X,\mathbb Z)$.
\item For $d=4h+2,\ h\in\mathbb N$, the class
\[L_{2,2}^{(1)}(h)=2L_0(h)+\rho\]
generates the lattice $\langle 2d\rangle$ such that $(\Omega_4\oplus\langle 2d \rangle)'$ is primitively embedded in $H^2(X,\mathbb Z)$; $L_{2,2}^{(1)}/2+x_{(2,1,6)}$ is in fact integer in $H^2(X,\mathbb Z)$.
\item For $d=4(h-1)+2,\ h\in\mathbb N\setminus\{0\}$, the class
\[L_{2,2}^{(2)}(h)=2L_0(h)+\rho+e_3+f_3+g_3+h_3\]
generates the lattice $\langle 2d\rangle$ such that $(\Omega_4\oplus\langle 2d \rangle)'$ is primitively embedded in $H^2(X,\mathbb Z)$; $L_{2,2}^{(2)}/2+x_{(2,1,10)}$ is in fact integer in $H^2(X,\mathbb Z)$.
\item For $d=4h+3,\ h\in\mathbb N$, the class
\[L_{2,3}(h)=2L_0(h)+\omega_2+\frac{e_1+f_1+g_1+h_1+e_4+f_4+g_4+h_4+a_1+a_2+3\rho}{2}\]
generates the lattice $\langle 2d\rangle$ such that $(\Omega_4\oplus\langle 2d \rangle)'$ is primitively embedded in $H^2(X,\mathbb Z)$; $L_{2,3}/2+x_{(2,1/2,32)}$ is in fact integer in $H^2(X,\mathbb Z)$.
\item For $d=16(h-1),\ h\in\mathbb N\setminus\{0,1\}$, the class
\[L_{4,0}(h)=4L_0(h)+e_1+f_1+g_1+h_1+3(e_3+f_3+g_3+h_3)+e_4+f_4+g_4+h_4+a_1+a_2+\rho+2\omega_2\]
generates the lattice $\langle 2d\rangle$ such that $(\Omega_4\oplus\langle 2d \rangle)^{\star}$ is primitively embedded in $H^2(X,\mathbb Z)$; $L_{4,0}/4+x_{(4,0,240)}$ is in fact integer in $H^2(X,\mathbb Z)$.
\item For $d=16h+4,\ h\in\mathbb N$, the class
\[L_{4,4}(h)=4L_0(h)+a_1+a_2+\rho+2\omega_2\]
generates the lattice $\langle 2d\rangle$ such that $(\Omega_4\oplus\langle 2d \rangle)^{\star}$ is primitively embedded in $H^2(X,\mathbb Z)$; $L_{4,4}/4+x_{(4,3/2,240)}$ is in fact integer in $H^2(X,\mathbb Z)$.
\item For $d=16(h-4)+8,\ h\in\mathbb N\setminus\{0,1,2,3\}$, the class
\[L_{4,8}(h)=4L_0(h)+\rho+\frac{3(e_2+f_2+g_2+h_2)+7(e_4+f_4+g_4+h_4)}{2}\]
generates the lattice $\langle 2d\rangle$ such that $(\Omega_4\oplus\langle 2d \rangle)^{\star}$ is primitively embedded in $H^2(X,\mathbb Z)$; $L_{4,8}/4+x_{(4,1,240)}$ is in fact integer in $H^2(X,\mathbb Z)$.
\item For $d=16h+12,\ h\in\mathbb N$, the class
\[L_{4,12}(h)=4L_0(h)+e_1+f_1+g_1+h_1+2(e_4+f_4+g_4+h_4+\rho+\omega_1+\omega_2)\]
generates the lattice $\langle 2d\rangle$ such that $(\Omega_4\oplus\langle 2d \rangle)^{\star}$ is primitively embedded in $H^2(X,\mathbb Z)$; $L_{4,12}/4+x_{(4,1/2,240)}$ is in fact integer in $H^2(X,\mathbb Z)$.
\end{enumerate}
\end{example}

\subsection{Relations between the families associated to the action of $\tau$, and those associated to the action of $\tau^2$}

Let $X$ be a projective K3 surface which admits a symplectic automorphism of order 4 $\tau$, and suppose $NS(X)$ be as in Theorem \ref{NSpossibili}: then $X$ is a special member of one of the families of projective surfaces that admit a symplectic involution (as $\tau^2$ is such), whose general element has Picard number $9$. 
The lattice $NS(X)$ is an overlattice of finite index of ${R\oplus\Omega_2\oplus\langle 2d\rangle}$.

\begin{theorem}[see \cite{VGS}, Prop. 2.2]\label{thm:famiglie2}
Let $X$ be a projective K3 surface with a symplectic involution $\iota$, such that $rk(NS(X))=9$.
Then we have the following possible cases for $NS(X)$:
\begin{enumerate}[(a)]
\item for every $d$, $NS(X)=\Omega_2\oplus\langle 2d\rangle$;
\item for $d$ even, $NS(X)=(\Omega_2\oplus\langle 2d\rangle)'$.
\end{enumerate}
\end{theorem}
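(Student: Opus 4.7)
The plan is to mimic the argument of Theorem \ref{NSpossibili}, specialised to the involution $\iota$. Since $\iota$ is symplectic, Nikulin (\cite{Nikulin2}, Thm.~4.15) gives a primitive embedding $\Omega_2 \simeq E_8(2) \hookrightarrow NS(X)$; as $X$ is projective, averaging an ample class over $\langle\iota\rangle$ produces an $\iota^*$-invariant ample class, which lies in $\Omega_2^{\perp_{NS(X)}}$ (since $\Omega_2$ is the primitive part of the anti-invariant sublattice of $\iota^*$ on $NS(X)$). As this orthogonal complement has rank $rk(NS(X)) - rk(\Omega_2) = 1$, its primitive generator $L$ has some square $2d > 0$, and $NS(X)$ is a finite-index overlattice of $\Omega_2 \oplus \langle 2d\rangle$ in which both summands are primitively embedded.

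Next, I would apply Theorem \ref{Nik1}: such overlattices correspond to isotropic subgroups $H \subseteq A_{\Omega_2} \oplus A_{\langle 2d\rangle}$, and the primitivity of both summands forces both coordinate projections of $H$ to be injective. As $A_{\langle 2d\rangle} \cong \mathbb{Z}/2d\mathbb{Z}$ is cyclic, $H$ is cyclic; as $A_{\Omega_2} \cong (\mathbb{Z}/2\mathbb{Z})^8$ is $2$-elementary and $H$ injects into it, one must have $|H| \in \{1,2\}$, so in particular only cyclic overlattices of index at most $2$ can arise. The case $|H| = 1$ gives option (a). For $|H| = 2$, a generator of $H$ takes the form $(v, L/2)$ with $v \in A_{\Omega_2}$ nonzero, and the isotropy condition reads $q_{\Omega_2}(v) + d/2 \equiv 0 \pmod{2\mathbb{Z}}$. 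Since the discriminant form of $E_8(2)$ takes values in $\{0,1\}\subset \mathbb{Q}/2\mathbb{Z}$, this equation admits a nonzero solution precisely when $d/2 \in \mathbb{Z}$, i.e.\ when $d$ is even; this gives option (b).

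For the uniqueness of $(\Omega_2\oplus\langle 2d\rangle)'$ for each even $d$ up to isometry, I would invoke Corollary \ref{corollario}: non-isomorphic cyclic overlattices correspond to distinct orbits of the induced action of $O(\Omega_2) \simeq W(E_8)$ on $A_{\Omega_2}$, and $W(E_8)$ acts transitively on each of the two non-trivial level sets of $q_{\Omega_2}$ (the $120$ classes $\pm r$ with $q=1$ represented by pairs of roots, and the $135$ classes with $q=0$). Since the value of $q_{\Omega_2}(v) \equiv d/2 \pmod{2\mathbb{Z}}$ is pinned down by $d$, only one orbit is relevant for each even $d$, yielding a unique overlattice. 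The main non-formal ingredient is the transitivity of $W(E_8)$ on these level sets, a classical fact that also underlies the analogous orbit computation carried out in Proposition \ref{classi} for $\Omega_4$; beyond that, the proof is a routine adaptation of the structure used to establish Theorem \ref{NSpossibili}.
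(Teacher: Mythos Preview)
Your argument is correct and complete. Note, however, that the paper does not actually supply its own proof of this statement: Theorem~\ref{thm:famiglie2} is quoted directly from \cite{VGS}, Prop.~2.2, and no argument is given in the present paper. What you have written is precisely the specialisation to $\Omega_2\simeq E_8(2)$ of the method the paper uses to establish the order-$4$ analogue, Theorem~\ref{NSpossibili}: classify cyclic overlattices via isotropic elements of the discriminant group (Theorem~\ref{Nik1}), and then reduce the count of isomorphism classes to an orbit computation under $O(\Omega_2)$ as in Corollary~\ref{corollario} and Proposition~\ref{classi}. The only substantive difference is that for $\Omega_2$ the orbit structure is classical (the action of $W(E_8)$ on $E_8/2E_8\cong\mathbb{F}_2^8$ has exactly three orbits, distinguished by the quadratic form), whereas for $\Omega_4$ the paper resorts to a machine computation; your appeal to the $W(E_8)$ transitivity is the natural replacement and is entirely standard.
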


To determine which of these families our Néron-Severi groups belong to, we return to the setting of Example \ref{esempiL}, and we consider the embedding of $\Omega_2$ in $\Omega_4$ as in Section \ref{Omega4}.

\begin{theorem}\label{NSXordine2}
\begin{enumerate}
\item For every $d\in \mathbb N$, $NS(X)=\Omega_4\oplus\langle 2d \rangle$ corresponds to case (a) of Thm \ref{thm:famiglie2}.
\item For $d=_4 2,3$, $NS(X)=(\Omega_4\oplus\langle 2d \rangle)'$ corresponds to case (a) of Thm \ref{thm:famiglie2}.
\item For $d=_4 0$, $NS(X)=(\Omega_4\oplus\langle 2d \rangle)'$ corresponds to case (b) of Thm \ref{thm:famiglie2}.
\item For $d=_4 0$, $NS(X)=(\Omega_4\oplus\langle 2d \rangle)^{\star}$ corresponds to case (b) of Thm \ref{thm:famiglie2}
\end{enumerate}
The following table describes the situation:
\begin{center}
\begin{tabular}{|c|c|c|c|c|c|}
\hline
\multicolumn{2}{|c|}{$NS(X)$\Tstrut} 	& $L$ glues to $\Omega_2$ \Tstrut  & $L$ glues to $R$ \Tstrut &  $NS(X)/(\Omega_2\oplus R\oplus\langle 2d \rangle)$ \\ [6pt]
\hline
$\forall d$\Tstrut	& $\Omega_4\oplus\langle 2d\rangle$\Tstrut & No & No & $(\mathbb Z/2\mathbb Z)^4$\\ [6pt]
\hline
$d=_4 2, 3$\Tstrut & $(\Omega_4\oplus\langle 2d\rangle)'$\Tstrut  & No & Yes & $(\mathbb Z/2\mathbb Z)^5$\\ [6pt]
\hline
\multirow{2}{*}{$d=_4 0$\Tstrut} & $(\Omega_4\oplus\langle 2d\rangle)'$\Tstrut & Yes & Yes & $(\mathbb Z/2\mathbb Z)^5$\\[6pt]
\cline{2-5} 
\ & $(\Omega_4\oplus\langle 2d\rangle)^{\star}$\Tstrut  & Yes & Yes & $(\mathbb Z/2\mathbb Z)^4\times \mathbb Z/4\mathbb Z$   \\ [6pt]
\hline
\end{tabular}
\end{center}
\end{theorem}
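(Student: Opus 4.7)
The strategy is to compare, for each case of Theorem \ref{NSpossibili}, the saturation of $\Omega_2 \oplus \langle L \rangle$ inside $NS(X)$ against the two possibilities offered by Theorem \ref{thm:famiglie2}. Since $\Omega_4$ is an overlattice of $\Omega_2 \oplus R$ of index $2^4$ (Section \ref{Omega4}), the lattice $NS(X)$ contains $\Omega_2 \oplus R \oplus \langle L \rangle$, and being in case (b) instead of case (a) of Theorem \ref{thm:famiglie2} is equivalent to the existence of an element $(L + \omega_2)/2 \in NS(X)$ with $\omega_2 \in \Omega_2 \setminus 2\Omega_2$.

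Translating into the explicit overlattice presentation of $NS(X)$: when $NS(X) = (\Omega_4 \oplus \langle L \rangle)'$ is obtained by adjoining a half-class $(L + v)/2$ with $v \in \Omega_4$, case (b) becomes the existence of $\omega \in \Omega_4$ such that $v - 2\omega \in \Omega_2 \setminus 2\Omega_2$; for the $\star$-overlattice an analogous condition holds. I would test this on each representative from Example \ref{esempiL}, writing $v = v_2 + v_R$ according to the $\mathbb Q$-decomposition $\Omega_4 \otimes \mathbb Q = (\Omega_2 \otimes \mathbb Q) \oplus (R \otimes \mathbb Q)$ -- recall that $R$ is the $(-1)$-eigenspace of $\tau^*$ acting on $\Omega_4$ -- and checking whether $v_R \pmod{R}$ lies in the image $I_R$ of $\Omega_4$ in $A_R$.

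Case 1 is immediate: $L = L_0$ is orthogonal to $\Omega_4$, so no half-class involving $L$ exists, giving case (a). Case 3 is the heart of the argument: for $L = L_{2,0}$ with glue element $v = 3e_3 - f_3 - g_3 - h_3 = 2(e_3 - g_3) + [(e_3 + g_3) - (f_3 + h_3)]$, the first summand lies in $2\Omega_2$ and the second in $R$; choosing $\omega = e_3 - f_3 \in \Omega_4$ one computes $v - 2\omega = (e_3 - g_3) + (f_3 - h_3)$, a $\mathbb Z$-combination of two independent basis vectors of $\Omega_2$ with odd coefficients, hence not in $2\Omega_2$, placing $X$ in case (b). Cases 2 and 4 are handled by analogous computations using the representatives $x_{(k, g, n)}$ from Corollary \ref{rappresentanti}: for $d \equiv_4 2, 3$ the $R$-component of the glue element cannot be absorbed into $2\Omega_4$ without leaving trivial $\Omega_2$-residue, yielding case (a); for $d \equiv_4 0$ with $NS(X) = (\Omega_4 \oplus \langle 2d \rangle)^{\star}$, an $\omega$ analogous to $e_3 - f_3$ produces $v - 2\omega \in \Omega_2 \setminus 2\Omega_2$, yielding case (b).

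The main technical obstacle is the non-existence argument for Case 2: to conclude case (a) I must verify that no $\omega \in \Omega_4$ satisfies $v - 2\omega \in \Omega_2 \setminus 2\Omega_2$. This reduces to computing $I_R \subset A_R$, which is generated by the projections of $e_i - f_i$, $\alpha - \beta$, $\alpha - \delta$ and $(e_2 - g_2 + e_4 - g_4 + a_1 - a_2 + \sigma)/2$ (the extra generators of $\Omega_4$ over $\Omega_2 \oplus R$ listed in Section \ref{Omega4}), and then checking that the relevant $R$-components (such as $\sigma/2$ arising from $x_{(2, 1, 6)}$) are not in $I_R$. The indices $[NS(X) : \Omega_2 \oplus R \oplus \langle L \rangle]$ displayed in the table then follow by multiplicativity of index through the chain $\Omega_2 \oplus R \oplus \langle L \rangle \subset \Omega_4 \oplus \langle L \rangle \subset NS(X)$.
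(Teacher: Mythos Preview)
Your approach is essentially the same as the paper's: both reduce the question to checking, for each explicit glue class $v$ from Example~\ref{esempiL}, whether $v$ is congruent modulo $2\Omega_4$ to an element of $\Omega_2$ (and, for the table, also to an element of $R$). Your computation for $L_{2,0}$ recovers exactly the paper's $\tilde v = (e_3-g_3)+(f_3-h_3)$.

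Two points where the paper is more efficient than your sketch. First, for $d\equiv_4 3$ the paper does not compute anything: since case~(b) of Theorem~\ref{thm:famiglie2} requires $d$ even, $d\equiv_4 3$ forces case~(a) immediately. You route this through the same $I_R$-analysis as $d\equiv_4 2$, which is unnecessary work. Second, for the last three columns of the table the paper exhibits, for each $L_{2,0},L_{4,0},L_{4,4},L_{4,8},L_{4,12}$, \emph{both} an explicit $\tilde v\in\Omega_2$ and an explicit $r\in R$ with $(L+\tilde v)/2,(L+r)/2\in NS(X)$; you only treat the $\Omega_2$-gluing explicitly and leave the $R$-gluing implicit. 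The ``$L$ glues to $R$'' column and the exact index structure (in particular the $\mathbb Z/4\mathbb Z$ factor for $(\Omega_4\oplus\langle 2d\rangle)^{\star}$) need a bit more than bare multiplicativity: one must also note that the extra glue between $\Omega_2$ and $R$ already present in $\Omega_4$ absorbs one of the two new relations, which is why the index stays $2^5$ rather than $2^6$ in the $(\Omega_4\oplus\langle 2d\rangle)'$, $d\equiv_4 0$ row. The paper handles this by the remark ``since $(\tilde v+r)/2$ is one of the elements that glue $\Omega_2$ to $R$''; you should make the analogous observation explicit.
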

\begin{proof}
The class $L_0$ doesn't glue to $\Omega_4$, so it cannot glue neither to $\Omega_2$ nor to $R$.
Since $L_{2,3}^2=2d$ with $d=_4 3$, this case corresponds necessarily to case (a) of Thm \ref{thm:famiglie2}; $\Omega_2\oplus R\oplus\langle 2d \rangle$ has index  $2^5$ in $NS(X)$, because there exists $r\in R$ such that $(L_{2,3}+r))/2\in NS(X)$: $r=\sigma+ g_1-f_1+e_4-f_4+\alpha-\gamma-(e_2-g_2+e_4-g_4+a_1-a_2+\sigma)/2$.
Despite $d$ being even, there are no elements in $\Omega_2$ that glue to $L_{2,2}^{(1)}$ or $L_{2,2}^{(2)}$, so we are again in case (a): this means that a symplectic involution that satisfies case (b) of Theorem \ref{thm:famiglie2}, with $L^2=_8 4$, cannot be the square of a symplectic automorphism of order 4. \\
The gluings for the cases in which $d=4k$ are described as follows:
$(L_{2,0}+\tilde v)/2,\ (L_{2,0}+r)/2\in NS(X)$ for $\tilde v=e_3-g_3+f_3-h_3 \in \Omega_2$, $r=e_3-f_3+g_3-h_3\in R$; since $(\tilde v+r)/2$ is one of the elements that glue $\Omega_2$ to $R$, the index of $\Omega_2\oplus R\oplus\langle 2d \rangle$ in $NS(X)$ is still $2^5$;
$(L_{4,0}+\tilde v)/2,\ (L_{4,0}+r)/2\in NS(X)$ for the same $\tilde v\in \Omega_2,\ r\in R$ as $L_{2,0}$;
$(L_{4,4}+\tilde v)/2,\ (L_{4,4}+r)/2\in NS(X)$ for $\tilde v=f_2-h_2+f_4-h_4\in \Omega_2$, $r=a_1-a_2+\sigma\in R$;
$(L_{4,8}+\tilde v)/2,\ (L_{4,8}+r)/2\in NS(X)$ for $\tilde v=\alpha-\gamma+\beta-\delta+e_1-g_1+f_1-h_1\in \Omega_2$, $r=e_1-f_1+g_1-h_1+\alpha-\beta+\gamma-\delta\in R$;
$(L_{4,12}+\tilde v)/2, (L_{4,12}+r)/2\in NS(X)$ for $\tilde v=e_1-g_1+f_1-h_1\in \Omega_2$, $r=e_1-f_1+g_1-h_1\in R$. 
Thus, these cases correspond to case (b) of Theorem \ref{thm:famiglie2}; $NS(X)$ is obtained as overlattice of $\Omega_2\oplus R\oplus\langle 2d \rangle$ by gluing firstly $\Omega_2$ with $R$ to get $\Omega_4$, and then $L_{4,k}$ with $\Omega_4$ as in Example \ref{esempiL}.
\end{proof}

\subsection{Families of projective K3 surfaces which arise as desingularization of $X/\tau$}

Projective surfaces $\tilde Y$ that are the resolution of $X/\tau$ have to primitively contain in their Néron-Severi both the exceptional lattice $M$ described in Section \ref{Y} (see \cite{Nikulin2}, \S 5-7), and a positive class $L$ of square $2e$ that generates $M^{\perp_{NS(\tilde Y)}}$: therefore, $\tilde Y$ is polarized with the lattice $M\oplus\langle 2e \rangle$ or one of its cyclic overlattices. 

\begin{theorem}\label{classiM}
The relation $\sim_{M}$ (see Def. \ref{reldeq}) divides $A_{M}$ in 7 non-trivial equivalence classes (plus the trivial one $\{0\}$):
\emph{\begin{center}\begin{tabular}{c|c|c|c|c}
\diagbox{$k$}{$g$}&
 0 & 1/2 & 1 & 3/2 \\
\hline
2 & 3 & 8 & 4 & 0\\
4 & 12 & 12 & 12 & 12
\end{tabular}\end{center}}
Each of them corresponds to an equivalence class for $\approx_{M}$, except for $(2,1)$ which is the union of two classes: $(2,1,1)$ and $(2,1,3)$.
We give a representative element $x_{(k,g,n)}$ for each non-trivial equivalence class $(k,g,n)$
 in terms of the generators of $M$ introduced in Section \ref{Y}.
\end{theorem}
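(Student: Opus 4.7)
The argument parallels that of Proposition \ref{classi}. First I would fix the description of $M$ from Section \ref{Y}: it is the overlattice of $A_3^{\oplus 4}\oplus A_1^{\oplus 2}$ with basis $\{m^i_1,m^i_2,m^i_3\}_{i=1}^4\cup\{\tilde m^1,\tilde m^2\}$ obtained by adjoining the glue vector $\mu$ of \eqref{mu}. From these explicit generators I would write down the Gram matrix, compute $A_M=M^*/M$ together with its discriminant form $q_M\colon A_M\to\mathbb Q/2\mathbb Z$, and record a set of generators of $A_M$ (by construction $A_M\simeq (\mathbb Z/4\mathbb Z)^2\times(\mathbb Z/2\mathbb Z)^2$, compatibly with the rank/length computations used elsewhere in the paper). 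Having this data, the classification under $\sim_M$ reduces to a finite enumeration: I list every element of $A_M$, compute its order $k$ and $q_M$-value $g\in\mathbb Q/2\mathbb Z$, and tabulate the results to recover the $3,8,4,0,12,12,12,12$ counts of the table.

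Next I would determine the equivalence classes for $\approx_M$. Following the same strategy used for $\Omega_4$, I compute generators of $O(M)$ using the Integral Lattices package in SAGE (\cite{sage}), project them to $O(A_M)$, and for each $\sim_M$-class I compute the orbit of a chosen representative under the induced action, recursively as in Algorithm I.4 of \cite{Hulpke}. By Theorem \ref{unicitaprimitivi} (and the fact that $\lambda(A_M)\leq\mathrm{rk}(\Lambda_{\mathrm{K3}})-\mathrm{rk}(M)-2$ in the relevant cases, compare Thm. \ref{condizione2}), the map $O(M)\to O(q_M)$ is surjective, so the $\approx_M$-orbits coincide with the $O(q_M)$-orbits. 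Each of the seven classes $(k,g)\neq(2,1)$ will come out as a single orbit, while $(2,1)$ will split into two orbits of cardinalities $1$ and $3$; the splitting is expected because in $A_M\simeq A_{A_3^{\oplus 4}\oplus A_1^{\oplus 2}}/\langle\overline\mu\rangle$ the class coming from $\tilde m^1/2+\tilde m^2/2$ mod $\mu$ and the classes coming from $A_3$-summands sit differently with respect to the quadratic form lifting even though they share $q_M$-value $1$.

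Finally, for each equivalence class $(k,g,n)$ I would exhibit an explicit representative $x_{(k,g,n)}$ expressed in the $\{m^i_j,\tilde m^\ell,\mu\}$-basis of $M^*$; these can be read off directly from the SAGE orbit computation and checked by hand for order and square. Listing the representatives in the required table completes the statement.

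The main obstacle is the splitting of the class $(2,1)$ into two orbits $(2,1,1)$ and $(2,1,3)$: producing this requires either an effective computation of $O(M)$ (and showing that no isometry moves the distinguished order-2 element of square $1$ in the $A_1^{\oplus 2}$-direction to one lying in an $A_3$-summand), or a genus-theoretic obstruction argument distinguishing two sublattices of index two in $M$. The SAGE-based orbit computation makes this rigorous but essentially mechanical; the conceptual content is the same phenomenon already observed for $\Omega_4$ in Proposition \ref{classi}, scaled down to the rank-$14$ exceptional lattice.
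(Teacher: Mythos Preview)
Your approach is essentially identical to the paper's: the paper does not write out a separate proof for Theorem \ref{classiM} but implicitly relies on the same SAGE-based orbit computation already described in the proof of Proposition \ref{classi}, which is exactly what you propose. One small remark: your aside invoking Theorems \ref{unicitaprimitivi} and \ref{condizione2} to conclude that $O(M)\to O(q_M)$ is surjective is not needed (you already compute the $\approx_M$-orbits directly from $O(M)$) and is not quite justified as stated, since those results yield surjectivity for the orthogonal complement rather than for $M$ itself; you can simply drop that sentence without affecting the argument.
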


\begin{longtable}{|c|c|}
\hline
class $(k,g,n)$ & representative $x_{(k,g,n)}$\TBstrut\\
\hline
 $(2,0,3)$ & $\frac{m^2_1+2m^2_2+3m^2_3+m^3_1+2m^3_2+3m^3_3}{2}$\TBstrut\\
\hline
 $(2,1/2,8)$ & $\frac{m^4_1+m^4_3+\tilde m^2}{2}$\TBstrut\\
\hline
 $(2,1,1)$ & $\frac{\tilde m^1+\tilde m^2}{2}$\TBstrut\\
\hline
 $(2,1,3)$ &$\frac{m^2_1+2m^2_2+3m^2_3+m^3_1+2m^3_2+3m^3_3+\tilde m^1+\tilde m^2}{2}$\TBstrut\\ 
\hline
 $(4,0,12)$ &$\frac{m^2_1+2m^2_2+3m^2_3+m^3_1+2m^3_2+3m^3_3}{4}+\frac{\tilde m^2}{2}$\TBstrut\\
\hline
 $(4,1/2,12)$ &$\frac{m^3_1+2m^3_2+3m^3_3+m^4_1+2m^4_2+3m^4_3}{4}+\frac{m^4_1+m^4_3}{2}$\TBstrut\\
\hline
 $(4,1,12)$ &$\frac{m^2_1+2m^2_2+3m^2_3+m^4_1+2m^4_2+3m^4_3}{4}+\frac{m^3_1+m^3_3+m^4_1+m^4_3+\tilde m^1}{2}$\TBstrut\\
\hline
 $(4,3/2,12)$ &$\frac{m^2_1+2m^2_2+3m^2_3+m^4_1+2m^4_2+3m^4_3}{4}+\frac{m^3_1+m^3_3}{2}$\TBstrut\\
\hline
\end{longtable}

\begin{theorem}\label{NSpossibiliY}
Let $\tilde Y$ be a projective K3 surface such that $rk(NS(\tilde Y))=15$ and $NS(\tilde Y)$ contains primitively $M$ and $\langle 2e \rangle,\ e\in\mathbb N\setminus\{0\}$. Then, $NS(\tilde Y)$ is one of the following:
\begin{enumerate}
\item for every $e$, $NS(\tilde Y)=M\oplus\langle 2e \rangle$;
\item for $e\neq_4 1$, $NS(\tilde Y)=(M\oplus\langle 2e \rangle)'$; for $e=_4 2$, there are two (non isometric) possibilities. 
\item For $e=_4 0$, $NS(\tilde Y)=(M\oplus\langle 2e \rangle)^{\star}$.
\end{enumerate}
Each of these lattices admits a unique primitive embedding in $\Lambda_{\mathrm{K3}}$.
\end{theorem}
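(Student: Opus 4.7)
The plan is to mirror the strategy of Theorems \ref{NSpossibili} and \ref{unicita}, with the lattice $M$ and the data of Theorem \ref{classiM} taking the role previously played by $\Omega_4$ together with Proposition \ref{classi} and Corollary \ref{rappresentanti}. By Theorem \ref{Nik1}, the cyclic overlattices of $M\oplus\langle 2e\rangle$ in which both summands remain primitive are in bijection with isotropic cyclic subgroups of $A_M\oplus\mathbb Z/2e\mathbb Z$ whose generator $(v,w)$ satisfies $\mathrm{ord}(v)=\mathrm{ord}(w)$.

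I would first treat the index-$2$ case: the isotropy condition reduces to $q_M(v)\equiv -e/2\pmod{2\mathbb Z}$, and comparing with the admissible values $\{0,1/2,1\}$ of $q_M$ on the order-$2$ classes listed in Theorem \ref{classiM} yields no solution for $e=_4 1$, a single $\approx_M$-class for $e=_4 0$ and $e=_4 3$, and exactly the two classes $(2,1,1)$ and $(2,1,3)$ for $e=_4 2$. For the index-$4$ case, the existence of an order-$4$ element in $\mathbb Z/2e\mathbb Z$ forces $4\mid 2e$, and the resulting isotropy equation then forces $e=_4 0$, with the class of $v$ uniquely selected among the four order-$4$ classes of Theorem \ref{classiM} by $e$ modulo $16$. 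Applying Corollary \ref{corollario} to each selection yields the enumeration stated in the theorem.

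Next, I would establish uniqueness of the primitive embedding into $\Lambda_{\mathrm{K3}}$ by running the criteria of Section \ref{sec:lattices} case by case, exactly as in Theorem \ref{unicita}. The order counts in Theorem \ref{classiM} force $A_M\simeq(\mathbb Z/2\mathbb Z)^2\oplus(\mathbb Z/4\mathbb Z)^2$, of length $4$; therefore for $NS(\tilde Y)=M\oplus\langle 2e\rangle$ with $e$ odd, the discriminant group contains $(\mathbb Z/2\mathbb Z)^3$ as a direct summand and Corollary \ref{condizione3} applies, while for $e$ even the transcendental lattice has rank $7$ and its discriminant exhibits the $2$-adic shape required by Proposition \ref{condizione1}, from which Theorem \ref{unicitaprimitivi} concludes. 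For the overlattice cases $(M\oplus\langle 2e\rangle)'$ and $(M\oplus\langle 2e\rangle)^{\star}$ the length of $A_{NS(\tilde Y)}$ drops to at most $4$, comfortably below the bound $22-15-2=5$ of Theorem \ref{condizione2}.

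The principal obstacle is the case $e=_4 2$: Corollary \ref{corollario} already shows that the two $\approx_M$-classes $(2,1,1)$ and $(2,1,3)$ produce non-isomorphic overlattices inside $\Lambda_{\mathrm{K3}}$, but one wants the stronger statement that they are non-isometric as abstract lattices. As in the proof of Theorem \ref{NSpossibili}, I would exhibit explicit primitive classes $L$ (in the style of Example \ref{esempiL}) realising each gluing and compute the resulting discriminant forms, confirming that the two overlattices lie in distinct genera. Once this is done, the remainder of the argument is routine bookkeeping within the framework already established.
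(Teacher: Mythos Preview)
Your proposal is correct and follows essentially the same route as the paper: classify cyclic overlattices via the $\approx_M$-classes of Theorem \ref{classiM}, then check uniqueness of the primitive embedding using the criteria of Section \ref{sec:lattices}. Two points of streamlining are worth noting. First, the paper realises each overlattice concretely by taking $\overline L=\pi_{4*}L/k$ for the classes $L$ already built in Example \ref{esempiL}, rather than constructing new primitive classes from scratch; this simultaneously settles the $e=_4 2$ non-isometry (one checks the two resulting lattices lie in distinct genera) and feeds directly into the correspondence of Theorem \ref{relations}. Second, your case-by-case uniqueness argument is more work than necessary: since $\ell(A_M)=4$, every lattice in the list has $\ell(A_{NS(\tilde Y)})\leq 5=22-15-2$, so a single invocation of Theorem \ref{condizione2} covers all cases at once.
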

\begin{proof}
The overlattices of $M\oplus\langle 2e \rangle$ are in bijection with the equivalence classes for $\approx_M$.
Fix the primitive embedding $M\hookrightarrow\Lambda_{\mathrm{K3}}$ as in Section \ref{Y}: since the orthogonal complement of $M$ is the lattice $\pi_{4*}H^2(X, \mathbb Z)$, we can use as generators of the lattice $\langle 2e\rangle$ the primitive classes $\overline L$ in $H^2(\tilde Y, \mathbb Z)$ obtained from $\pi_{4*}L$ (with $L$ as in  Example \ref{esempiL}) as follows.
Refer to Section  \ref{sec:maps} for the computation of $\pi_{4*}L$, and see also the corresponding classes $D_1$ in Section \ref{sec:eigenspaces} for the explicit gluing element $(\overline L+m)/k,\ m\in A_M,\ k=2,4$:
\begin{longtable}{|c|c|c|}
\hline
{$NS(\tilde Y)$\Tstrut}    		    & $e$\Tstrut &$\overline L$\\ [6pt]
\hline
\multirow{4}{*}{$M\oplus\langle 2e\rangle$\Tstrut} 	&$4(h-1)$ & $\pi_{4*}L_{4,0}(h)/4$\Tstrut\\[6pt] 
												&$4h+1$ & $\pi_{4*}L_{4,4}(h)/4$\\[6pt]
												&$4(h-4)+2$ & $\pi_{4*}L_{4,8}(h)/4$\\[6pt]
												&$4h+3$ & $\pi_{4*}L_{4,0}(h)/4$\Bstrut\\
\hline
\multirow{4}{*}{$(M\oplus\langle 2e\rangle)'$\TTstrut} 	&$4(h-1)$ & $\pi_{4*}L_{2,0}(h)/2$\Tstrut\\[6pt] 
													&$4h+2$ & $\pi_{4*}L_{2,2}^{(1)}(h)/2$\\[6pt]
													&$4(h-1)+2$ & $\pi_{4*}L_{2,2}^{(2)}(h)/2$\\[6pt]
													&$4h+3$ & $\pi_{4*}L_{2,3}(h)/2$\Bstrut\\
\hline
$(M\oplus\langle 2e\rangle)^{\star}$ &$4h$ &$\pi_{4*}L_0(h)$\TBstrut\\
\hline
\end{longtable}
Notice that for every choice of $e=_4 2$ there are two non isomorphic realizations of $(M\oplus\langle 2e\rangle)'$, using alternatively $\pi_{4*}L_{2,2}^{(i)}(h)/2,\ i=1,2$: indeed, for $h$ odd both of them glue to the class $(2,1,1)$, while for $h$ even they both glue to $(2,1,3)$. The resulting lattices belong to different genera.\\
Each of the possible lattices $NS(\tilde Y)$ admits a unique primitive embedding in $H^2(X,\mathbb Z)$, because $\ell(A_{NS(\tilde Y)})\leq 5$, so Theorem \ref{condizione2} holds.
\end{proof}

\begin{theorem}\label{relations}
There is a 1-1 correspondence between families of K3 surfaces $X$ with $NS(X)$ as in \ref{NSpossibili}, and families of K3 surfaces $\tilde Y$ with $NS(\tilde Y)$ as in Theorem \ref{NSpossibiliY}. The primitive classes $\overline L\in NS(\tilde Y)$ that generate the sublattices $\langle nd\rangle$ as stated are indicated in curly brackets.
\begin{center}
\begin{tabular}{|c|c|c c|}
\hline
\multicolumn{2}{|c|}{$NS(X)$\Tstrut}    		    &\multicolumn{2}{c|} {$NS(\tilde Y)$\Tstrut} \\ [6pt]
\hline
$\forall d$\Tstrut	& $\Omega_4\oplus\langle 2d\rangle$\Tstrut & $(M\oplus\langle 8d\rangle)^{\star}$ &$\{\overline L=\pi_{4*}L\}$\Tstrut\\ [6pt]
\hline
\multirow{2}{*}{$d=_4 2$\Tstrut} & $(\Omega_4\oplus\langle 2d\rangle)'^{(1)}$\Tstrut 
&$(M\oplus\langle 2d\rangle)'^{(1)}$ & $ \big\{\overline L=\frac{\pi_{4*}L}{2}\big\} $\Tstrut \\[6pt]
\ & $(\Omega_4\oplus\langle 2d\rangle)'^{(2)}$\Tstrut &$(M\oplus\langle 2d\rangle)'^{(2)} $ & $ \big\{\overline L=\frac{\pi_{4*}L}{2}\big\} $\Tstrut \\[6pt]
\hline
$d=_4 3$\Tstrut & $(\Omega_4\oplus\langle 2d\rangle)'$\Tstrut & $(M\oplus\langle 2d\rangle)'$ & $ \big\{\overline L=\frac{\pi_{4*}L}{2}\big\}$\Tstrut \\ [6pt]
\hline
\multirow{2}{*}{$d=_4 0$\Tstrut} & $(\Omega_4\oplus\langle 2d\rangle)'$\Tstrut 
&$(M\oplus\langle 2d\rangle)'$ & $\big\{\overline L=\frac{\pi_{4*}L}{2}\big\} $\Tstrut \\[6pt]
\ & $(\Omega_4\oplus\langle 2d\rangle)^\star$\Tstrut &$M\oplus\langle d/2\rangle $ & $ \big\{\overline L=\frac{\pi_{4*}L}{4}\big\} $\Tstrut \\[6pt]
\hline
\end{tabular}
\end{center}
\end{theorem}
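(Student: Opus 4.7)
The plan is to verify the table row by row, using for each family of $NS(X)$ in Theorem \ref{NSpossibili} the explicit ample class $L$ recorded in Example \ref{esempiL}. Two facts drive the computation. First, each chosen $L$ lies in the $\tau^*$-invariant sublattice $\Omega_4^{\perp_{H^2(X,\mathbb Z)}}$, so $\pi_4^*\pi_{4*}L=\sum_{k=0}^3(\tau^k)^*L=4L$ and hence $(\pi_{4*}L)^2=L\cdot\pi_4^*\pi_{4*}L=4L^2=8d$ by the projection formula. Second, by Proposition \ref{immagineGamma} one has $\pi_{4*}\Omega_4=0$, so every gluing between $L$ and $\Omega_4$ inside $NS(X)$ translates into a divisibility of $\pi_{4*}L$: if $(L+v)/k\in NS(X)$ with $v\in\Omega_4$ and $k\in\{2,4\}$, then $\pi_{4*}L/k=\pi_{4*}((L+v)/k)$ already lies in $\pi_{4*}NS(X)\subseteq NS(\tilde Y)$. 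Accordingly, the primitive generator $\overline L$ of $M^{\perp_{NS(\tilde Y)}}$ equals $\pi_{4*}L$, $\pi_{4*}L/2$, or $\pi_{4*}L/4$ depending on the row, with respective squares $8d$, $2d$, $d/2$ -- exactly the rank-one summands appearing in the table.

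To pin down the cyclic overlattice structure of $NS(\tilde Y)$ over $M\oplus\mathbb Z\overline L$, I would then analyse, row by row, which fractional classes $(\overline L+m)/n\in NS(\tilde Y)$ (for $m\in M$, $n\in\{2,4\}$) are produced by the gluing elements $y_1,\dots,y_4$ of \eqref{y}. Expanding each $\pi_{4*}L$ in the basis featuring $\overline a,\overline\chi,\overline\omega_i,\overline\zeta,\overline\eta$ and matching it against the fractional terms of the $y_i$'s, the uniqueness of primitive embedding established in Theorem \ref{NSpossibiliY} identifies $NS(\tilde Y)$ with exactly one of the listed lattices. In particular, the $\mathbb Z/4\mathbb Z$-overlattice $(M\oplus\langle 8d\rangle)^\star$ of the first row should be forced by $y_1,y_2$, whereas in the last row the $\mathbb Z/4\mathbb Z$-gluing on the $NS(X)$ side has already been absorbed into the primitivity of $\overline L=\pi_{4*}L/4$, leaving $NS(\tilde Y)=M\oplus\langle d/2\rangle$ with no further gluing.

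The two subfamilies in the $d\equiv_4 2$ row I would handle by observing that the inequivalent choices $L_{2,2}^{(1)},\,L_{2,2}^{(2)}$ come from the two $O(\Omega_4)$-orbits $(2,1,6)$ and $(2,1,10)$ of Corollary \ref{rappresentanti}, and their halved pushforwards $\pi_{4*}L_{2,2}^{(i)}/2$ realise the two non-isomorphic overlattices $(M\oplus\langle 2d\rangle)'^{(1)}$ and $(M\oplus\langle 2d\rangle)'^{(2)}$ distinguished in Theorem \ref{NSpossibiliY}. The bijectivity of the correspondence is then established by running the construction in reverse through the moduli correspondence of Proposition \ref{condizione_soprareticoli}: any projective $\tilde Y$ in a listed family is the resolution of $X/\tau$ for a unique $X$, and the value of $\overline L^2$ together with the overlattice structure identifies its $NS(X)$ inside Theorem \ref{NSpossibili}. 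I expect the main technical obstacle to be the genus-level distinction in the $d\equiv_4 2$ case, where the two overlattices share rank, signature and discriminant form, and non-isomorphism must be extracted from the $O(M)$-action on $A_M$ rather than from elementary discriminant invariants.
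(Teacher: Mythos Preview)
Your proposal is correct and follows the same route as the paper: push each explicit $L$ from Example \ref{esempiL} forward by $\pi_{4*}$ (which kills $\Omega_4$), extract the primitive class $\overline L=\pi_{4*}L/k$ with $k\in\{1,2,4\}$, and read off how $\overline L$ glues to $M$ inside $H^2(\tilde Y,\mathbb Z)$. The paper's proof is shorter only because the explicit matching of each $\overline L$ to an isometry class in $A_M$ has already been tabulated in the proof of Theorem \ref{NSpossibiliY}; your plan to do this via the gluing elements $y_1,\dots,y_4$ of \eqref{y} amounts to the same computation.

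One correction to your last paragraph: the anticipated obstacle in the $d\equiv_4 2$ row is overstated. The two overlattices $(M\oplus\langle 2d\rangle)'^{(1)}$ and $(M\oplus\langle 2d\rangle)'^{(2)}$ do \emph{not} share discriminant form; the paper checks (in the proofs of Theorems \ref{NSpossibili} and \ref{NSpossibiliY}) that each pair lies in different genera. So the distinction is already visible from elementary invariants, and you will not need the subtler $O(M)$-orbit argument you are bracing for.
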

\begin{proof}
The map $\pi_{4*}$ kills $\Omega_4$, so the possible Néron-Severi groups for the general smooth quotient surface  $\tilde Y=\widetilde{X/\tau}$ are determined by how the image of the ample class $L$ glues to the exceptional lattice $M$. For each of the $L$ in Example \ref{esempiL} we compute $\pi_{4*}L$ using the description of the image lattice $\pi_{4*}H^2(X,\mathbb Z)$ given in Section \ref{sec:maps}; we find the unique integral and primitive $\overline L=\pi_{4*}L/k$, where $k$ can be $1, 2$ or $4$ depending on the case, and we then compare $\overline L^2$ to $L^2$.
\end{proof}
 
\subsection{Families of projective K3 surfaces which arise as desingularization of $X/\tau^2$}

The process used in the previous section can be also applied to describe the K3 surfaces $\tilde Z$ that are resolution of $X/\tau^2$, and the relations between $NS(X)$ and $NS(\tilde Z)$; for the general symplectic involution, this was already done by Garbagnati and Sarti: 
\begin{theorem}[see \cite{GS}, Cor. 2.2]\label{NSZ}
Let $X$ be an algebraic K3 surface with $rk(NS(X))=9$ admitting a Nikulin involution $\iota$, and let $\tilde Z$ be the desingularization of the quotient $X/\iota$. Then:
\begin{enumerate}[(a)]
\item $NS(X) = \Omega_2\oplus\langle 2d\rangle$ if and only if $NS(\tilde Z) = (N\oplus\langle 4d\rangle)'$;
\item $NS(X) = (\Omega_2\oplus\langle 2d\rangle)'$ if and only if $NS(\tilde Z) =N\oplus\langle d\rangle$.
\end{enumerate}
\end{theorem}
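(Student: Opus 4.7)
The plan is to mimic, in the simpler setting of a symplectic involution, the strategy used in the order-four case to prove Theorems \ref{relations} and \ref{relationsZ}. The key tool is an explicit description of the pushforward $\pi_*\colon H^2(X,\mathbb{Z})\to H^2(\tilde Z,\mathbb{Z})$ for the quotient map $X\to\tilde Z$: on the invariant sublattice $\Lambda_{\mathrm{K3}}^{\iota}$, $\pi_*$ doubles the intersection form (up to lattice saturation), while on the co-invariant lattice $\Omega_2$ it is identically zero. The target $H^2(\tilde Z,\mathbb{Z})$ is an explicit overlattice of $\pi_*H^2(X,\mathbb{Z})\oplus N$ (the analogue of the gluing relations \eqref{z} in the order-four setting, but involving only $8$ nodal classes). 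Since $NS(\tilde Z)$ is the saturation of $\pi_*NS(X)\oplus N$ inside $H^2(\tilde Z,\mathbb{Z})$, the entire problem reduces to computing $\pi_*L$ and checking which of the admissible gluings with $N$ fall inside $NS(\tilde Z)$.

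For the forward direction of (a), take $NS(X)=\Omega_2\oplus\langle 2d\rangle$ with ample class $L$ orthogonal to $\Omega_2$. Since $L$ lies in the $\iota$-invariant sublattice, $\pi_*L$ is primitive in $\pi_*H^2(X,\mathbb{Z})$ with $(\pi_*L)^2=4d$, and $\pi_*NS(X)=\langle 4d\rangle$. Within the fixed overlattice structure on $H^2(\tilde Z,\mathbb{Z})$, there is a unique isotropic element of $A_{\pi_*NS(X)}\oplus A_N$ of order $2$ hitting the class of $\pi_*L$, namely $(\pi_*L+\nu_N)/2$ for a suitable half-sum $\nu_N\in\frac12N\setminus N$. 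This element is integral for every $d$, giving the index-$2$ overlattice $(N\oplus\langle 4d\rangle)'$; a putative further gluing $(\pi_*L+n)/4$ would force an integral preimage $(L+v)/4$ in $H^2(X,\mathbb{Z})$, which is incompatible with the unimodularity of $\Lambda_{\mathrm{K3}}$ and the discriminant form of $\Omega_2\oplus\langle 2d\rangle$. Hence $NS(\tilde Z)=(N\oplus\langle 4d\rangle)'$.

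For the forward direction of (b), $NS(X)=(\Omega_2\oplus\langle 2d\rangle)'$ exists only for $d$ even and contains a class $(L+\omega)/2$ with $\omega\in\Omega_2$. Applying $\pi_*$ and using $\pi_*\omega=0$ gives $\pi_*(L+\omega)/2=\pi_*L/2\in\pi_*NS(X)$, a primitive class of square $d$. Thus $\pi_*NS(X)=\langle d\rangle$. Any further cyclic overlattice would correspond to a glue element $(\pi_*L/2+n)/k$ coming from $H^2(X,\mathbb{Z})$; a direct calculation using the gluings of $H^2(\tilde Z,\mathbb{Z})$ over $\pi_*H^2(X,\mathbb{Z})\oplus N$ shows that none is integral, so $NS(\tilde Z)=N\oplus\langle d\rangle$ as a direct sum.

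The converse directions follow by lattice uniqueness: by Theorem \ref{thm:overlattices} together with the criteria of Proposition \ref{condizione1} and Corollary \ref{condizione3}, each of the lattices $(N\oplus\langle 4d\rangle)'$ and $N\oplus\langle d\rangle$ admits a unique primitive embedding into $\Lambda_{\mathrm{K3}}$, so they determine their isomorphism class of $(X,\iota)$ uniquely via Torelli and the characterization of resolutions of Nikulin quotients (\cite{Nikulin2}, Thm.~4.15). The main obstacle, and where most of the work lies, is the precise case analysis of which cyclic overlattices of $\pi_*NS(X)\oplus N$ are actually realized inside the fixed overlattice structure of $H^2(\tilde Z,\mathbb{Z})$; this requires tracking the isotropic subgroup of $A_{\pi_*H^2(X,\mathbb{Z})}\oplus A_N$ that defines $H^2(\tilde Z,\mathbb{Z})$ and verifying that the only classes $(\pi_*L+n)/k\in H^2(\tilde Z,\mathbb{Z})$ compatible with the ample class are the ones described above.
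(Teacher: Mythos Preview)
The paper does not give its own proof of this statement: Theorem~\ref{NSZ} is quoted from \cite{GS}, Cor.~2.2, and is used as a known input. So there is no ``paper's proof'' to compare against. Your proposal is, in spirit, exactly the method the paper applies in the order-4 setting (Theorems~\ref{relations} and~\ref{relationsZ}): compute $\pi_*L$, take its primitive part in $\pi_*H^2(X,\mathbb Z)$, and determine which gluings with the exceptional lattice survive inside the explicit overlattice $H^2(\tilde Z,\mathbb Z)$. This is the right approach, and it is essentially how \cite{GS} proceeds as well.

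Two places where your sketch is thinner than it should be. First, in case~(a) the phrase ``a unique isotropic element of $A_{\pi_*NS(X)}\oplus A_N$ of order $2$ hitting the class of $\pi_*L$'' is not accurate as stated: there are many such elements in the abstract discriminant group, and which one is realised in $H^2(\tilde Z,\mathbb Z)$ depends on the parity of $d$ (compare the tables of $E_1,E_2$ just below Theorem~\ref{NSZ}, where the half-sum of $n_i$'s changes between $d$ even and $d$ odd). You need the explicit gluing data, not an abstract uniqueness claim. Second, for the converse you invoke uniqueness of the primitive embedding and Torelli, but this only tells you that $\tilde Z$ is determined up to isomorphism by $NS(\tilde Z)$; it does not by itself produce the covering surface $X$ with the prescribed $NS(X)$. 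The actual argument (in \cite{GS}) runs the construction backwards via $\pi^*$, or equivalently uses that $N\subset NS(\tilde Z)$ guarantees a double cover branched on the even set of nodal curves, and then computes $NS$ of that cover. Your appeal to \cite{Nikulin2}, Thm.~4.15 is for the $\Omega$-side, not the $M$-side, so it does not close this gap on its own.
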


\begin{theorem}
Let $\tilde Z$ be a K3 surface such that $rk(NS(\tilde Z))=15$; suppose $NS(\tilde Z)$ admits a primitive embedding of both $\Gamma$ (see Def. \ref{def:Gamma}) and of a class of positive square $2d$ that generates $\Gamma^{\perp_{NS(\tilde Z)}}$. Then $d=2x$, and $NS(\tilde Z)$ is one of the following:
\begin{enumerate}
\item for every $x$, $NS(\tilde Z)=(\Gamma\oplus\langle 4x\rangle)'$, uniquely determined.
\item for $x=_4 2, 3$, $NS(\tilde Z)=(\Gamma\oplus\langle 4x\rangle)^\star$, uniquely determined.
\end{enumerate}
Moreover, there exists a unique primitive embedding of these lattices in $H^2(\tilde Z, \mathbb Z)$ up to isometries of the latter.
\end{theorem}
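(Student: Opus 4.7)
The plan is to adapt the lattice-theoretic strategy used for $\Omega_4$ (Theorem \ref{NSpossibili}) and $M$ (Theorem \ref{NSpossibiliY}) to the lattice $\Gamma$. By Theorem \ref{Nik1} and Corollary \ref{corollario}, a cyclic overlattice of $\Gamma\oplus\langle 2d\rangle$ of index $k$ corresponds to an isotropic cyclic subgroup $\langle(v+L)/k\rangle\subset A_\Gamma\oplus A_{\langle 2d\rangle}$ with $v/k\in A_\Gamma$ and $L/k\in A_{\langle 2d\rangle}$ of the same order (in particular $v/k\neq 0$, otherwise $L$ would cease to be primitive in the overlattice and the hypothesis $L^2=2d$ would be violated), and satisfying $q_\Gamma(v/k)+q_{\langle 2d\rangle}(L/k)\equiv 0\pmod{2\mathbb Z}$; two such subgroups give isomorphic overlattices iff their $A_\Gamma$-components are conjugate under the induced action of $O(\Gamma)$. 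The proof therefore splits into (a) ruling out the trivial index-$1$ case and forcing $d=2x$, (b) enumerating the $O(\Gamma)$-orbits of the admissible order-$2$ and order-$4$ classes of $A_\Gamma$, and (c) checking uniqueness of the primitive embedding $NS(\tilde Z)\hookrightarrow\Lambda_{\mathrm{K3}}$.

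For (a), the trivial overlattice $\Gamma\oplus\langle 2d\rangle$ cannot embed primitively in $\Lambda_{\mathrm{K3}}$ because $\ell(A_{\Gamma\oplus\langle 2d\rangle})=\ell(A_\Gamma)+1=9>22-15=7$, contradicting Remark \ref{nonesiste}. To force $d$ even in the index-$2$ case, I would compute $A_\Gamma$ and $q_\Gamma$ explicitly starting from the presentation of $\Gamma$ as the index-$4$ overlattice of $\hat R\oplus N$ given in Definition \ref{def:Gamma}: after realising $A_\Gamma\simeq\langle x_1,x_2\rangle^{\perp}/\langle x_1,x_2\rangle$ inside $A_{\hat R\oplus N}\simeq(\mathbb Z/2\mathbb Z)^{10}\times(\mathbb Z/4\mathbb Z)^2$ and tabulating $q_\Gamma$ on an adapted basis, one verifies that $q_\Gamma$ takes only integer values on its $2$-torsion. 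Combined with $q_{\langle 2d\rangle}(L/2)=d/2$, the isotropy condition then forces $d\in 2\mathbb Z$; in the index-$4$ case the condition $4\mid 2d$, required for $L/4$ to live in $A_{\langle 2d\rangle}$, already gives $d\in 2\mathbb Z$ automatically. Setting $d=2x$ concludes (a).

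For (b), I would enumerate the $O(\Gamma)$-orbits of the nonzero order-$2$ and order-$4$ classes of $A_\Gamma$ along the lines of Proposition \ref{classi} and Theorem \ref{classiM}, generating $O(\Gamma)$ via the Integral Lattices package in SAGE and propagating representatives by the recursive algorithm of \cite{Hulpke}. Matching the constraint $q_\Gamma(v/2)\equiv -x\pmod{2\mathbb Z}$ at the order-$2$ level, I expect exactly one admissible orbit for each residue of $x\pmod 4$, producing the family $(\Gamma\oplus\langle 4x\rangle)'$; matching $q_\Gamma(v/4)\equiv -x/4\pmod{2\mathbb Z}$ at the order-$4$ level, I expect exactly one admissible orbit when $x\equiv 2,3\pmod 4$ and none in the other two residues, giving $(\Gamma\oplus\langle 4x\rangle)^{\star}$ only in those cases. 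As a consistency check, I would pull back these lattices to the $NS(X)$-side through the map $\pi_{2*}$ of Section \ref{sec:maps} applied to the polarisations of Example \ref{esempiL}, verifying that the primitive image $\pi_{2*}L/k\in NS(\tilde Z)$ generates the expected rank-$1$ orthogonal complement of $\Gamma$ and that the resulting correspondence with Theorem \ref{NSpossibili} matches the one predicted in Theorem \ref{relations}.

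For (c), uniqueness of the primitive embedding of each listed $NS(\tilde Z)$ into $\Lambda_{\mathrm{K3}}\simeq H^2(\tilde Z,\mathbb Z)$ will follow from Nikulin's criteria: in both families $A_{NS(\tilde Z)}$ inherits a $(\mathbb Z/2\mathbb Z)^3$ summand from the $(\mathbb Z/2\mathbb Z)^6$ factor of $A_\Gamma$ that survives the gluing with $\langle 4x\rangle$, so Corollary \ref{condizione3} applies via Theorem \ref{condizione2}; alternatively, Proposition \ref{condizione1} can be applied to the transcendental complement $T(\tilde Z)$, of signature $(2,5)$ and with first two discriminant invariants equal to $2$, placing it in case (1) of that proposition. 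The main obstacle I anticipate is the verification in (a) that $q_\Gamma$ is integer-valued on the whole $2$-torsion: this requires writing out the Gram matrix of $\Gamma$ in full and performing the quotient by $\langle x_1,x_2\rangle$ explicitly, carefully tracking the contributions of $A_{\hat R}$ (which a priori carries half-integer values inherited from the $\langle -4\rangle$ summand of $\Omega_4$) and $A_N$. Once this is in hand, the orbit enumeration in (b) and the Nikulin-criterion verification in (c) reduce to the same routine bookkeeping carried out in the preceding theorems.
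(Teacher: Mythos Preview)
Your overall strategy matches the paper's, but your expectation in step (b) is wrong and hides the main difficulty of this theorem. Unlike $A_{\Omega_4}$ and $A_M$, where each pair (order, square) gives at most two $O(\cdot)$-orbits, the discriminant group $A_\Gamma$ splits into many orbits: for order-$2$ elements there are four orbits with square $0$ (of sizes $1,6,30,90$) and four with square $1$ (of sizes $2,6,30,90$), and for order-$4$ elements there are two orbits for each admissible square value. So for a fixed $x$ you will \emph{not} find ``exactly one admissible orbit''; you will find several, and by Corollary \ref{corollario} each a priori defines a different overlattice $(\Gamma\oplus\langle 4x\rangle)'$ or $(\Gamma\oplus\langle 4x\rangle)^\star$. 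Your consistency check via $\pi_{2*}$ would in fact confirm this: the classes $\hat L_0(h)$ with $h$ even versus $h$ odd, or $\hat L_{4,0}$ versus $\hat L_{4,8}$, glue to genuinely different $\approx_\Gamma$-classes.

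The paper closes this gap with two additional arguments that your proposal is missing. First, the smallest orbits $(2,0,1)$ and $(2,1,2)$ (coming from the $\hat\tau^*$-anti-invariant part of $N$) produce overlattices whose discriminant group still has length $9$, so they are discarded by the same obstruction as in Remark \ref{nonesiste}. Second, and crucially, for the remaining orbits the paper does not try to show they are $O(\Gamma)$-conjugate (they are not); instead it computes that all resulting overlattices of index $2$ (respectively $4$) lie in a common genus, and then invokes Proposition \ref{condizione1} on the length profile $(2,2,2,2,4,4,4x)$ (respectively $(2,2,2,2,2,2,4x)$) to conclude that this genus contains a single isometry class. This genus-uniqueness step is what makes $(\Gamma\oplus\langle 4x\rangle)'$ and $(\Gamma\oplus\langle 4x\rangle)^\star$ ``uniquely determined'' in the statement, and it is absent from your plan. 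Note also that your appeal to Corollary \ref{condizione3}/Theorem \ref{condizione2} in step (c) fails as stated, since $\ell(A_{NS(\tilde Z)})=7>22-15-2=5$; you need your alternative via Proposition \ref{condizione1} on $T(\tilde Z)$, which is indeed what the paper uses.
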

\begin{proof}
The lattice $\Gamma\oplus\langle 2d\rangle$ cannot be the Néron-Severi of a K3 surface, since $\lambda(A_{\Gamma\oplus\langle 2d\rangle})=(2,2,2,2,2,2,4,4,2d)$), so its length is $9>22-rk(\Gamma\oplus\langle 2d\rangle)$ (see Rem. \ref{nonesiste}).\\
Consider the table of non-trivial equivalence classes for $\sim_{\Gamma}$:
\begin{center}\begin{tabular}{c|c|c|c|c|c|c|c|c}
\diagbox{$k$}{$g$}&
 0 & 1/4 & 1/2 & 3/4 & 1 & 7/4 & 3/2 & 9/4\\
\hline
2 &  127 & 0 & 0 & 0 & 128 & 0 & 0 & 0\\
4 &  0 & 256 & 144 & 0 & 0 & 256 & 112 & 0
\end{tabular}\end{center}
an element of the form $(E+\gamma)/2$, with $E^2=2d$ and $\gamma\in\Gamma$, has integer, even self-intersection only if $d$ is even, and an element of the form $(E+\gamma)/4$ only if $d=2x$ with $x=_4 2,3$. The non-trivial equivalence classes for $\approx_{\Gamma}$ are presented in the following table; the corresponding overlattice of $\Gamma\oplus\langle 4x\rangle$ can be realized having fixed the embedding $\Gamma\in H^2(\tilde Z,\mathbb Z)$ as in Def. \ref{def:Gamma}, using as positive class $\hat L=\pi_{2*}L$ for $L\in\{L_0, L_{2,2}^{(1)}, L_{2,2}^{(2)}, L_{2,3}\}$, and  $\hat L=\pi_{2*}L/2$ for $L\in\{L_{2,0}, L_{4,0}, L_{4,4}, L_{4,8},  L_{4,12}\}$.
\begin{longtable}{|c|c|c|}
\hline
class $(k,g,n)$ & representative $x_{(k,g,n)}$ & glues to: \TBstrut\\
\hline
$(2,0,1)$ & $(n_3+n_4+n_5+n_8)/2$ & $\hat L_{2,2}^{(i)}/2$ for $i=1,2$  \TBstrut\\
\hline
$(2,0,6)$ & $(n_5+n_6+n_7+n_8)/2$ & $\hat L_0(h)/2$, $h=_2 0$ \TBstrut\\ 
\hline
$(2,0,30)$  &$\begin{matrix}[1.4] \addlinespace[2pt]\frac{(\hat e_3-\hat f_3)+n_3+n_4}{2}\\
\frac{(\hat\alpha-\hat\beta)+(\hat e_1-\hat f_1)+n_3+n_4+n_5+n_8}{2} \end{matrix}$ &{$\begin{matrix}[1.4]\hat L_{4,0}/2\\ \hat L_{4,8}/2\end{matrix}$}\TBstrut\\ 
\hline
$(2,0,90)$ & $\frac{(\hat e_3-\hat f_3)+n_2+n_3+n_4+n_5+n_6+n_8}{2}$ & $\hat L_{2,0}(h)/2$, $h=_2 1$ \TBstrut\\ 
\hline
$(2,1,2)$ & $(n_5+n_8)/2$ & $\hat L_{2,3}/2$ \TBstrut\\
\hline
$(2,1,6)$ & $(n_2+n_3+n_4+n_5+n_6+n_8)/2$ & $\hat L_0(h)/2$, $h=_2 1$ \TBstrut\\ 
\hline
$(2,1,30)$ &$\begin{matrix}[1.4]\addlinespace[2pt]\frac{(\hat e_1-\hat f_1)+(\hat e_4-\hat f_4)+n_3+n_4}{2}\\
(\hat e_1-\hat f_1)/2\end{matrix}$&$\begin{matrix}[1.4]\hat L_{4,4}\\ \hat L_{4,12}\end{matrix}$ \TBstrut\\
\hline
$(2,1,90)$ & $\frac{(\hat e_3-\hat f_3)+n_5+n_6+n_7+n_8}{2}$ & $\hat L_{2,0}(h)/2$, $h=_2 0$\TBstrut\\ 
\hline
$(4,1/4,256)$ & $\frac{(\hat e_1-\hat f_1)+(\hat e_4-\hat f_4)+x_1+ x_2+n_2}{2}+\frac{3n_5+n_8}{4}$ &$\hat L_{2,3}(h)/4$, $h=_2 1$\TBstrut\\ 
\hline
$(4,1/2,24)$ & $\frac{x_1}{2}+\frac{3n_3+n_4+3n_5+n_8}{4}$ &$\hat L_{2,2}^{(1)}(h)/4$, $h=_2 1$\TBstrut\\ 
\hline
$(4,1/2,120)$ &$\frac{(\hat e_3-\hat f_3)+ x_1+n_2+n_7}{2}+\frac{n_3+3n_4+3n_5+n_8}{4}$ &$\hat L_{2,2}^{(2)}(h)/4$, $h=_2 0$\TBstrut\\ 
\hline
$(4,5/4,256)$ &$\frac{x_1+x_2+n_3+n_4+n_7+(\hat e_1-\hat f_1)+(\hat e_4-\hat f_4)}{2}+\frac{3n_5+n_8}{4}$ & $\hat L_{2,3}(h)/4$, $h=_2 0$\TBstrut\\ 
\hline
$(4,3/2,40)$ &$\frac{(\hat e_3-\hat f_3)+ x_1}{2}+\frac{3n_3+n_4+3n_5+n_8}{4}$  & $\hat L_{2,2}^{(2)}(h)/4$, $h=_2 1$\TBstrut\\
\hline
$(4,3/2,72)$ &$\frac{x_1+n_2+n_7}{2}+\frac{n_3+3n_4+3n_5+n_8}{4}$ & $\hat L_{2,2}^{(1)}(h)/4$, $h=_2 0$\TBstrut\\ 
\hline
\end{longtable}
Now, the classes $(2,0,1)$ and $(2,1,2)$ produce overlattices of $\Gamma\oplus\langle 4x\rangle$ that are not admissible as Néron-Severi of a K3 surfaces, because they have $\ell=9$ (see Rem. \ref{nonesiste}).
For the remaining classes $(k,g,n)$, those contained in the same equivalence class $(k,g)$ for the relation $\sim_{\Gamma}$ give rise to isomorphic lattices: indeed, having fixed $x$, it can be proved that all the lattices of the form $(\Gamma\oplus\langle 4x\rangle)'$ are in the same genus, and the same holds for all the lattices of the form $(\Gamma\oplus\langle 4x\rangle)^\star$; however, since $\lambda(A_{(\Gamma\oplus\langle 4x\rangle)'})=(2,2,2,2,4,4,4x)$, and $\lambda(A_{(\Gamma\oplus\langle 4x\rangle)^\star})=(2,2,2,2,2,2,4x)$, actually $(\Gamma\oplus\langle 4x\rangle)'$ and $(\Gamma\oplus\langle 4x\rangle)^\star$ are unique in their genus by Proposition \ref{condizione1}.
Furthermore, they admit a unique primitive embedding in $\Lambda_{\mathrm{K3}}$, as we can apply Proposition \ref{condizione1} to the corresponding trascendental lattices, both of signature $(2,5)$ and length 7, $T'=((\Gamma\oplus\langle 4x\rangle)')^{\perp}$, $T^{\star}=((\Gamma\oplus\langle 4x\rangle)^{\star})^{\perp}$: indeed $\lambda(A_{T'})=\lambda(A_{(\Gamma\oplus\langle 4x\rangle)'})$, and
$\lambda(A_{T^{\star}})=\lambda(A_{(\Gamma\oplus\langle 4x\rangle)^\star})$.
\end{proof}

\begin{theorem}\label{relationsZ}
Let $\tau$ be a symplectic automorphism of order 4 on a projective K3 surface $X$ such that $rk(NS(X))=15$, and consider $\tilde Z$ that is the resolution of the quotient $X/\tau^2$: the following table describes the correspondence between $NS(X)$ and $NS(\tilde Z)$. The primitive classes $\hat L$ in $NS(\tilde Z)$ that generate the sublattices $\langle nd\rangle$ as stated are indicated in curly brackets.
\begin{longtable}{|c|c|c c|c|}
\hline
\multicolumn{2}{|c|}{$NS(X)$\Tstrut} 				                     & \multicolumn{2}{c|}{$NS(\tilde Z)$ \Tstrut}  &$\hat L$ glues to $N$\\ [6pt]
\hline
$\forall d$\Tstrut	& $\Omega_4\oplus\langle 2d\rangle$\Tstrut  
		  	& $(\langle 4d\rangle\oplus\Gamma)' $ & $ \{\hat L=\pi_{2*}L\}$ & Yes \\ [6pt]
\hline
\multirow{2}{*}{$d=_4 2$\Tstrut} & $(\Omega_4\oplus\langle 2d\rangle)'^{(1)}$\Tstrut 
& \multirow{2}{*}{$(\langle 4d\rangle\oplus\Gamma)^\star$\Tstrut} &  \multirow{2}{*}{$ \big\{\hat L=\pi_{2*}L\big\} $\Tstrut} & \multirow{2}{*}{Yes\Tstrut} \\[6pt]
\ & $(\Omega_4\oplus\langle 2d\rangle)'^{(2)}$\Tstrut &\ &\  &\  \\ [6pt]
\hline
$d=_4 3$\Tstrut & $(\Omega_4\oplus\langle 2d\rangle)'$\Tstrut  
& $(\langle 4d\rangle\oplus\Gamma)^\star $ & $ \{\hat L=\pi_{2*}L\}$ & Yes \\ [6pt]
\hline
\multirow{2}{*}{$d=_4 0$\Tstrut} & $(\Omega_4\oplus\langle 2d\rangle)'$\Tstrut 
& \multirow{2}{*}{$(\langle d\rangle\oplus\Gamma)' $\Tstrut} & \multirow{2}{*}{ $ \big\{\hat L=\frac{\pi_{2*}L}{2}\big\} $\Tstrut} & \multirow{2}{*}{No\Tstrut} \\[6pt]
\ & $(\Omega_4\oplus\langle 2d\rangle)^{\star}$\Tstrut &\ &\  &\   \\ [6pt]
\hline
\end{longtable}
\end{theorem}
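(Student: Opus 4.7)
My approach mirrors the proof of Theorem \ref{relations}, specialized to the degree-two quotient map $\pi_2$. The key structural observation is that, whereas $\pi_{4*}$ annihilates the entire lattice $\Omega_4$, the map $\pi_{2*}$ kills only the sublattice $\Omega_2 \subset \Omega_4$, sending the orthogonal complement $R$ into $\hat R = \pi_{2*}\Omega_4 \subset \Gamma$ (see Def.~\ref{def:Gamma}). Consequently $NS(\tilde Z)$ is an overlattice of $\Gamma \oplus \langle \hat L^2\rangle$, where $\hat L$ is the primitive generator of $\Gamma^{\perp_{NS(\tilde Z)}}$, and the task is to identify $\hat L^2$ together with the gluing of $\hat L$ to $\Gamma$ for each row of the table.

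The central numerical step is the push--pull formula $(\pi_{2*}L)^2 = \tfrac12 (\pi_2^*\pi_{2*}L)^2$, with $\pi_2^*\pi_{2*}L = L + (\tau^2)^*L$. Since $L$ lies in $\Omega_4^{\perp_{NS(X)}}$ and $\Omega_4$ is the co-invariant lattice of $\tau^*$, we have $\tau^* L = L$, hence $\pi_2^*\pi_{2*}L = 2L$ and $(\pi_{2*}L)^2 = 2L^2 = 4d$. The primitive generator $\hat L$ of the rank-one piece is then either $\pi_{2*}L$ itself (so $\hat L^2 = 4d$) or $\pi_{2*}L/2$ (so $\hat L^2 = d$). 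The latter occurs precisely when there exists $\tilde v \in \Omega_2$ with $(L+\tilde v)/2 \in NS(X)$, because $\pi_{2*}\Omega_2 = 0$ then forces $\pi_{2*}L/2$ to be integral; by Theorem \ref{NSXordine2} this happens exactly in the two cases with $d =_4 0$, matching the table.

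For each representative $L$ of Example \ref{esempiL} I would then apply the $\mathbb Q$-linear extension of the map described in Proposition \ref{prop:sottostella} to compute $\pi_{2*}L$ explicitly, determine $\hat L$, and examine how $\hat L$ glues to $\Gamma$ inside $NS(\tilde Z)$. Recall that $H^2(\tilde Z,\mathbb Z)$ is an overlattice of $\pi_{2*}H^2(X,\mathbb Z) \oplus N$ via the gluing elements $z_1,\dots,z_6$ of \eqref{z}; a gluing of $\hat L$ with a non-trivial element of $N \subset \Gamma$ must therefore come from combining one of the $z_i$'s with the $\pi_{2*}$-image of a gluing element of $NS(X)$. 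This gives an effective criterion for the last column of the table: $\hat L$ glues to $N$ precisely when $L$ produces, via Theorem \ref{NSXordine2}, an $R$-gluing that $\pi_{2*}$ pushes into the $x_1,x_2$-part of $\Gamma$. By direct inspection this is the case in every row except those with $d =_4 0$, where the entire integrality is already absorbed into the division by $2$ giving $\hat L = \pi_{2*}L/2$.

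The main obstacle is to distinguish the outcomes $(\Gamma \oplus \langle 4d\rangle)'$ and $(\Gamma \oplus \langle 4d\rangle)^\star$, which amounts to computing the order of the gluing element $(\hat L + \gamma)/k$ inside $A_{\Gamma \oplus \langle 4d\rangle}$. Using Theorem \ref{NSXordine2}, this order can be read off from the divisibility of the corresponding gluing inside $NS(X)$: a simple $R$-gluing yields a $(\cdot)'$-overlattice on the $\tilde Z$ side, while an $\Omega_4$-gluing whose $R$-component has image of order $4$ in $A_{\hat R}$ yields the $(\cdot)^\star$-overlattice. For $d =_4 0$ the two $NS(X)$ possibilities $(\Omega_4\oplus\langle 2d\rangle)'$ and $(\Omega_4\oplus\langle 2d\rangle)^\star$ collapse to the same $(\Gamma\oplus\langle d\rangle)'$ because the extra factor of $2$ in the second case comes from the $\Omega_2$-part that $\pi_{2*}$ annihilates. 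Once the correct type has been identified in every row, the uniqueness of $NS(\tilde Z)$ in each case is guaranteed by the uniqueness portion of the previous theorem.
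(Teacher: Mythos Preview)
Your overall strategy---push $L$ forward by $\pi_{2*}$, identify the primitive class $\hat L$, then read off the overlattice type of $\Gamma\oplus\langle\hat L^2\rangle$ from the classification in the preceding theorem---is exactly what the paper does, and your push--pull computation together with the $\Omega_2$-gluing criterion for when $\hat L=\pi_{2*}L/2$ are correct and match Theorem \ref{NSXordine2}.

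However, the heuristic criteria you give in the third and fourth paragraphs are wrong, and would mislead if taken at face value. The column ``$\hat L$ glues to $N$'' is \emph{not} governed by the existence of an $R$-gluing in $NS(X)$: for $L_0$ there is no gluing to $\Omega_4$ whatsoever (Theorem \ref{NSXordine2}), yet $\hat L_0=\pi_{2*}L_0$ does glue to $N$. The correct dichotomy is Theorem \ref{NSZ}: $\hat L$ glues to $N$ precisely when $L$ does \emph{not} glue to $\Omega_2$, i.e.\ in case (a) of Theorem \ref{thm:famiglie2}; this is exactly the split recorded in Theorem \ref{NSXordine2}, and it matches the table immediately. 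Similarly, your rule ``a simple $R$-gluing yields a $(\cdot)'$-overlattice on the $\tilde Z$ side'' is false: $L_{2,3}$ has only an order-$2$ $R$-gluing (no $\Omega_2$-gluing), yet the outcome is $(\Gamma\oplus\langle 4d\rangle)^\star$. What actually happens in those rows is that the $z_i$'s supply an $N$-gluing of $\hat L$, and the $\pi_{2*}$-image of the $R$-gluing supplies an $\hat R$-gluing; together (using the $x_1,x_2$ relations inside $\Gamma$) these assemble into an order-$4$ element of $A_\Gamma$, hence $(\cdot)^\star$. For $d=_4 0$ the $\Omega_2$-gluing is absorbed into the division $\hat L=\pi_{2*}L/2$, the $N$-gluing disappears (case (b) of Theorem \ref{NSZ}), and only an order-$2$ $\hat R$-gluing survives, giving $(\cdot)'$.

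Since you ultimately fall back on ``direct inspection'' of the representatives from Example \ref{esempiL} against the table in the preceding theorem, the proof goes through; but you should replace the incorrect heuristics with the correct one (the Garbagnati--Sarti dichotomy of Theorem \ref{NSZ} controls the $N$-gluing) or simply omit them, as the paper does.
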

\begin{proof}
Recall from Theorem \ref{NSXordine2} the possible Néron-Severi groups of $X$. We use the map $\pi_{2*}$ (see Section \ref{sec:maps}) to compute $\hat L$ for each of the $L$ in Example \ref{esempiL}, and we check their eventual gluing to $N$ following Section \ref{sec:resolutionZ}. \\
Fix $d=4h+2=4(k-1)+2$, and consider the ample classes $L^{(1)}_{2,2}(h)$, $L^{(2)}_{2,2}(k)$ of $X$ that generate the two non isomorphic overlattices of index 2 of $\Omega_4\oplus\langle 2d\rangle$: denote these lattices $NS(X)^{(1)}$ and $NS(X)^{(2)}$. Now take $\tilde Z$ the resolution of $X/\tau^2$: from the previous theorem, we have $NS(\tilde Z)^{(1)}=\langle\Gamma,\hat L^{(1)}_{2,2}(h)\rangle\simeq NS(\tilde Z)^{(2)}=\langle\Gamma,\hat L^{(2)}_{2,2}(k)\rangle$.
Therefore, for $d=_4 2$ there is a 2-1 correspondence between $(\Omega_4\oplus\langle 2d\rangle)'$-polarized families of $X$ and $(\Gamma\oplus\langle 4d\rangle)^\star$-polarized families of $\tilde Z$. Similar considerations apply to $d=_4 0$. 
\end{proof}

\section{Projective models}

Given a nef and big divisor $L$ on $X$, there is a natural map $\phi_{|L|}: X\rightarrow\mathbb P(H^0(X,L)^*)\simeq \mathbb P^n$,
with $n=L^2/2+1$. Any automorphism $\sigma$ of $X$ that preserves $L$ induces an action on $H^0(X,L)$: in particular, if $\sigma$ is finite of order $m$, we can split $H^0(X,L)$ in eigenspaces corresponding to the $m$-roots of unity.
Thus, considering the action of a symplectic automorphism $\tau$ of order 4 on a K3 surface $X$, we have
\[H^0(X,L)=V_1\oplus V_i \oplus V_{-1}\oplus V_{-i}=W_+\oplus W_-\]
where $V_\bullet$ are the eigenspaces relative to the action of $\tau^*$, and $W_\bullet$ are relative to $(\tau^2)^*$, so that $W_+=V_1\oplus V_{-1}$, and $W_-=V_i\oplus V_{-i}$.

\subsection{Eigenspaces of $\tau^*$ and classes in $NS(\tilde Y)$}\label{sec:eigenspaces}

The purpose of this section is to prove the following proposition: 
\begin{proposition}[see \cite{VGS}, Prop. 2.7 and \cite{GP}, Thm. 5.6]\label{prop:autospazi}
\[H^0(X,L)=\pi_4^*H^0(\tilde Y, D_1)\oplus\pi_4^*H^0(\tilde Y, D_2)\oplus\pi_4^*H^0(\tilde Y, D_3)\oplus\pi_4^*H^0(\tilde Y, D_4)\]
for some divisors $D_1,\dots , D_4\in NS(\tilde Y)$, and  every $\pi_4^*H^0(\tilde Y, D_i)$ corresponds to one of the eigenspaces for the action of $\tau^*$ on $H^0(X,L)$. 
\end{proposition}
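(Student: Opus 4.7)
The plan is to follow the template of \cite{VGS} Prop. 2.7 and \cite{GP} Thm. 5.6, obtaining the decomposition by first diagonalising the action of $\tau^*$ on $H^0(X,L)$ and then identifying each eigenspace with the pullback of sections of an explicit line bundle on $\tilde Y$. Because $L$ lies in $\Omega_4^{\perp_{NS(X)}}$, its class is $\tau$-invariant; since $\mathrm{Pic}^0(X)=0$ for a K3 surface, the line bundle $\mathcal O_X(L)$ admits a $\tau$-linearisation, unique up to a character of $\langle\tau\rangle$. Fixing such a linearisation, $\tau^*$ acts on $H^0(X,L)$ and splits it as $H^0(X,L)=V_1\oplus V_i\oplus V_{-1}\oplus V_{-i}$ according to the fourth roots of unity. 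The task is to produce divisors $D_1,\dots,D_4\in NS(\tilde Y)$ with $V_{i^{k-1}}=\pi_4^*H^0(\tilde Y,D_k)$ for $k=1,2,3,4$.

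For the invariant eigenspace $V_1$, pullback always lands there, so $\pi_4^*H^0(\tilde Y,D_1)\subseteq V_1$ as soon as $D_1$ satisfies $\pi_4^*D_1=L$ modulo exceptional contributions. Conversely, a $\tau$-invariant section of $L$ descends to a section of the pushforward $(q_4)_*\mathcal O_X(L)$ on the singular quotient $Y=X/\tau$, which pulls back to a section on the resolution $\tilde Y$; the divisor $D_1$ is then the primitive representative in $NS(\tilde Y)$ of the class $\pi_{4*}L$ computed in Proposition~\ref{prop:sottostella}, coinciding with the class $\overline L$ of Theorem~\ref{NSpossibiliY}.

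For the other eigenspaces, the pushforward $(\pi_4)_*\mathcal O_X(L)$ decomposes under the $\langle\tau\rangle$-action into four rank-one subsheaves indexed by the characters. After resolving $Y$, each summand corresponds to a line bundle $\mathcal O_{\tilde Y}(D_k)$ with $D_k=D_1+C_k$, where $C_k$ is an explicit $\mathbb Q$-linear combination of the exceptional curves $m^i_j$ and $\tilde m^j$ spanning $M$ (Section~\ref{Y}). The coefficients of $C_k$ are forced by the local weights of $\tau$ at its eight isolated special points: at each of the four $\tau$-fixed points the differential of $\tau$ is $\mathrm{diag}(i,-i)$, while the two images in $Y$ of the pairs of $\tau^2$-fixed points exchanged by $\tau$ are ordinary double points. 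Consequently a local eigensection of global character $i^{k-1}$ must vanish to a prescribed order on each exceptional curve of $\tilde Y$ above such a point; subtracting this forced vanishing from the divisor of any eigensection yields an effective divisor on $\tilde Y$ of class $D_k$, and hence $V_{i^{k-1}}\subseteq\pi_4^*H^0(\tilde Y,D_k)$.

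The main obstacle is to check that each $D_1+C_k$ is genuinely integral in $NS(\tilde Y)$: this uses the overlattice relations \eqref{y} and, crucially, the fractional generator $\mu$ of the exceptional lattice $M$ (see \eqref{mu}), which together make the character-weighted combinations integer classes in $H^2(\tilde Y,\mathbb Z)$. Once the $D_k$ have been exhibited, equality $V_{i^{k-1}}=\pi_4^*H^0(\tilde Y,D_k)$ follows by matching dimensions: $L$ is big and nef on $X$ and each $D_k$ is big and nef on $\tilde Y$, so by Riemann--Roch on K3 surfaces $h^0(X,L)=L^2/2+2$ and $h^0(\tilde Y,D_k)=D_k^2/2+2$, and a direct computation using Proposition~\ref{prop:sottostella} together with the push-pull formula shows $\sum_{k=1}^{4}h^0(\tilde Y,D_k)=h^0(X,L)$, forcing the asserted equality in each character component.
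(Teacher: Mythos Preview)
Your overall strategy---linearise $\tau$ on $\mathcal O_X(L)$, split $H^0(X,L)$ into character eigenspaces, identify each with $\pi_4^*H^0(\tilde Y,D_k)$ via the local weights of $\tau$ on the exceptional configuration, and conclude by a Riemann--Roch dimension count---is exactly the route the paper takes, only phrased more abstractly. The paper simply writes down the $D_k$ explicitly in tables (as $\pi_{4*}L/4$ minus specific combinations of the fractional classes $\alpha^i,\beta^i,\gamma^i,\delta^j\in M^*$), checks $\pi_4^*D_k=L$, observes that different $D_k$ intersect $M$ differently so land in different eigenspaces, and then verifies $\sum_k\chi(D_k)=\chi(L)$.

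Two points in your execution need correction. First, your identification of $D_1$ with the primitive class $\overline L\in M^{\perp}$ is wrong in general: inspecting the paper's tables, for $L=L_0(d)$ or $L=L_{2,\bullet}$ every $D_k$, including $D_1$, carries a nonzero $M$-component; only for $L=L_{4,j}$ does one of the $D_k$ equal the bare $\pi_{4*}L/4$. The invariant eigenspace does not descend to $\overline L$ because the descent already picks up exceptional contributions at the quotient singularities. Second, your claim that each $D_k$ is big and nef is false: Table~\ref{Tab:dim} shows for instance $\chi(D_3)=(d-1)/4$ in case no.~1, which vanishes for $d=1$, so $D_3^2=-4$ there. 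The dimension count still works, but the correct justification is the one implicit in the paper: the inclusions $\pi_4^*H^0(\tilde Y,D_k)\hookrightarrow V_{\bullet_k}$ into pairwise distinct eigenspaces give $\sum_k h^0(D_k)\le h^0(L)$, while $h^2(D_k)=h^0(-D_k)=0$ (since $\pi_4^*D_k=L$ is ample) gives $h^0(D_k)\ge\chi(D_k)$, and then $\sum_k\chi(D_k)=\chi(L)$ forces equality throughout.
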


We fix some notation to begin with: consider the following elements in $M^*$, for ${i=1,\dots , 4}$, ${j=1,2}$ (see also Section \ref{Y}):
\[\alpha^i=\frac{3m^i_1+2m^i_2+m^i_3}{4},\quad \beta^i=\frac{m^i_1+2m^i_2+m^i_3}{2}, \quad \gamma^i=\frac{m^i_1+2m^i_2+3m^i_3}{4},\quad \delta^j=\frac{\tilde m^j}{2};\]
notice that $(\alpha^i)^2=(\gamma^i)^2=-3/4,\ (\beta^i)^2=-1,\ (\delta^j)^2=-1/2$ with respect to the intersection form of $M$ extended $\mathbb Q$-linearly to $M^*$.
\begin{itemize}[--]
\item Consider $L_0(d)$; depending on the value of $d$ mod $4$, we define $D_1,\dots , D_4$ as follows:
\end{itemize}
\begin{center}\begin{tabular}{|m{0.1\textwidth}<{\centering}|m{0.4\textwidth}<{\centering}|m{0.4\textwidth}<{\centering}|}
\hline
$L_0(d) \TBstrut$ &{ $d=_4 0$ \TBstrut }                      &{ $d=_4 1$  \TBstrut} \\  \hline
$D_1\Tstrut $ & $\pi_{4*}L_0/4-\gamma^2-\gamma^4-\delta^2$ \Tstrut & $\pi_{4*}L_0/4-\gamma^2-\alpha^3-\delta^1-\delta^2$\Tstrut  \\
$D_2\Tstrut $ & $\pi_{4*}L_0/4-\alpha^1-\alpha^3-\delta^1\Tstrut$  & $\pi_{4*}L_0/4-\alpha^1-\beta^3-\alpha^4$\Tstrut  \\
$D_3\Tstrut$ & $\pi_{4*}L_0/4-\beta^1-\alpha^2-\beta^3-\alpha^4-\delta^2\Tstrut$  & $\pi_{4*}L_0/4-\beta^1-\alpha^2-\gamma^3-\beta^4-\delta^1-\delta^2\Tstrut$\\
$D_4\TBstrut $ & $\pi_{4*}L_0/4-\gamma^1-\beta^2-\gamma^3-\beta^4-\delta^1$\TBstrut  & $\pi_{4*}L_0/4-\gamma^1-\beta^2-\gamma^4$ \TBstrut\\ \hline
\end{tabular}\end{center}
\begin{center}\begin{tabular}{|m{0.1\textwidth}<{\centering}|m{0.4\textwidth}<{\centering}|m{0.4\textwidth}<{\centering}|}
\hline
$L_0(d)\TBstrut$ &{ $d=_4 2$ \TBstrut}                      &{ $d=_4 3$ \TBstrut} \Tstrut \\  \hline
$D_1\Tstrut$ & $\pi_{4*}L_0/4-\gamma^2-\beta^3-\alpha^4-\delta^2\Tstrut$ & $\pi_{4*}L_0/4-\gamma^2-\gamma^3-\beta^4-\delta^1-\delta^2$\Tstrut \\
$D_2\Tstrut$ & $\pi_{4*}L_0/4-\alpha^1-\gamma^3-\beta^4-\delta^1\Tstrut$ & $\pi_{4*}L_0/4-\alpha^1-\gamma^4$\Tstrut  \\
$D_3\Tstrut$ & $\pi_{4*}L_0/4-\beta^1-\alpha^2-\gamma^4-\delta^2\Tstrut$ & $\pi_{4*}L_0/4-\beta^1-\alpha^2-\alpha^3-\delta^1-\delta^2$ \Tstrut  \\
$D_4\TBstrut$ & $\pi_{4*}L_0/4-\gamma^1-\beta^2-\alpha^3-\delta^1\TBstrut$ & $\pi_{4*}L_0/4-\gamma^1-\beta^2-\beta^3-\alpha^4$ \TBstrut\\ \hline
\end{tabular}\end{center}
\begin{itemize}[--]
\item Consider $L_{2,0}(h)$, whose square is $2d=8(h-1)$; depending on the value of $h$ mod $2$, we define $D_1,\dots , D_4$ as follows.
\end{itemize}
\begin{center}\begin{tabular}{|m{0.1\textwidth}<{\centering}|m{0.4\textwidth}<{\centering}|m{0.4\textwidth}<{\centering}|}
\hline
$L_{2,0}(h) \TBstrut$ &{ $h=_2 0$ \TBstrut }                      &{ $h=_2 1$  \TBstrut} \\  \hline
$D_1\Tstrut $ & $\pi_{4*}L_{2,0}/4-\beta^2-\beta^4 \Tstrut$ & $\pi_{4*}L_{2,0}/4-\beta^2-\beta^3$\Tstrut  \\
$D_2\Tstrut $ & $\pi_{4*}L_{2,0}/4-\alpha^1-\gamma^2-\alpha^3-\gamma^4-\delta^1-\delta^2\Tstrut$  & $\pi_{4*}L_{2,0}/4-\alpha^1-\gamma^2-\gamma^3-\alpha^4-\delta^1-\delta^2$\Tstrut  \\
$D_3\Tstrut$ & $\pi_{4*}L_{2,0}/4-\beta^1-\beta^3\Tstrut$  & $\pi_{4*}L_{2,0}/4-\beta^1-\beta^4\Tstrut$\\
$D_4 \TBstrut$ & $\pi_{4*}L_{2,0}/4-\gamma^1-\alpha^2-\gamma^3-\alpha^4-\delta^1-\delta^2\TBstrut$ & $\pi_{4*}L_{2,0}/4-\gamma^1-\alpha^2-\alpha^3-\gamma^4-\delta^1-\delta^2\TBstrut$ \\ \hline
\end{tabular}\end{center}
\begin{itemize}[--]
\item Consider $L^{(j)}_{2,2}(h)$, $j=1,2$; recall that $L^{(1)}_{2,2}(h)^2=8h+4$, while $L^{(2)}_{2,2}(h)^2=8h-4$: thus, any value of $d=_4 2$ can be realized \emph{both} with $h$ even \emph{and} $h$ odd, using one between $L^{(1)}_{2,2}, L^{(2)}_{2,2}$ alternatively, giving two non-isomorphic cases. 
\end{itemize}
\begin{center}\begin{tabular}{|m{0.1\textwidth}<{\centering}|m{0.4\textwidth}<{\centering}|m{0.4\textwidth}<{\centering}|}
\hline
$L^{(j)}_{2,2}(h) \TBstrut$ &{ $h=_2 0$ \TBstrut }                      &{ $h=_2 1$  \TBstrut} \\  \hline
$D_1\Tstrut $ & $\pi_{4*}L^{(j)}_{2,2}/4-\beta^3-\beta^4-\delta^1-\delta^2 \Tstrut$ & $\pi_{4*}L^{(j)}_{2,2}/4-\delta^1-\delta^2$\Tstrut  \\
$D_2\Tstrut $ & $\pi_{4*}L^{(j)}_{2,2}/4-\alpha^1-\alpha^2-\gamma^3-\gamma^4\Tstrut$  & $\pi_{4*}L^{(j)}_{2,2}/4-\alpha^1-\alpha^2-\alpha^3-\alpha^4$\Tstrut  \\
$D_3\Tstrut$ & $\pi_{4*}L^{(j)}_{2,2}/4-\beta^1-\beta^2-\delta^1-\delta^2\Tstrut$  & $\pi_{4*}L^{(j)}_{2,2}/4-\beta^1-\beta^2-\beta^3-\beta^4-\delta^1-\delta^2\Tstrut$\\
$D_4 \TBstrut$ & $\pi_{4*}L^{(j)}_{2,2}/4-\gamma^1-\gamma^2-\alpha^3-\alpha^4\TBstrut$ & $\pi_{4*}L^{(j)}_{2,2}/4-\gamma^1-\gamma^2-\gamma^3-\gamma^4\TBstrut$ \\ \hline
\end{tabular}\end{center}
\begin{itemize}[--]
\item Consider $L_{2,3}(h)$, whose square is $2d=2(4h+3)$; depending on the value of $h$ mod $2$, we define $D_1,\dots , D_4$ as follows.
\end{itemize}
\begin{center}\begin{tabular}{|m{0.1\textwidth}<{\centering}|m{0.4\textwidth}<{\centering}|m{0.4\textwidth}<{\centering}|}
\hline
$L_{2,3}(h) \TBstrut$ &{ $h=_2 0$ \TBstrut }                      &{ $h=_2 1$  \TBstrut} \\  \hline
$D_1\Tstrut $ & $\pi_{4*}L_{2,3}/4-\beta^4-\delta^2 \Tstrut$ & $\pi_{4*}L_{2,3}/4-\beta^3-\delta^2$\Tstrut  \\
$D_2\Tstrut $ & $\pi_{4*}L_{2,3}/4-\alpha^1-\alpha^2-\alpha^3-\gamma^4-\delta^1\Tstrut$  & $\pi_{4*}L_{2,3}/4-\alpha^1-\alpha^2-\gamma^3-\alpha^4-\delta^1$\Tstrut  \\
$D_3\Tstrut$ & $\pi_{4*}L_{2,3}/4-\beta^1-\beta^2-\beta^3-\delta^2\Tstrut$  & $\pi_{4*}L_{2,3}/4-\beta^1-\beta^2-\beta^4-\delta^2\Tstrut$\\
$D_4 \TBstrut$ & $\pi_{4*}L_{2,3}/4-\gamma^1-\gamma^2-\gamma^3-\alpha^4-\delta^1\TBstrut$ & $\pi_{4*}L_{2,3}/4-\gamma^1-\gamma^2-\alpha^3-\gamma^4-\delta^1\TBstrut$ \\ \hline
\end{tabular}\end{center}
\begin{itemize}[--]
\item Consider  $L_{4,j}$ for $j=0,4,8,12$; in this case $\pi_{4*}L_{4,j}/4$ is primitive in $NS(\tilde Y)$, and we can define $D_1,\dots , D_4$ simultaneously for any $j$ and any value of $h$, as follows:
\end{itemize}
\begin{center}\begin{tabular}{|m{0.1\textwidth}<{\centering}|m{0.4\textwidth}<{\centering}|}
\hline
$L_{4,j}(h) \TBstrut$ &{ any $h$ \TBstrut }                       \\  \hline
$D_1\Tstrut $ & $\pi_{4*}L_{4,j}/4$  \\
$D_2\Tstrut $ & $\pi_{4*}L_{4,j}/4-\alpha^1-\alpha^2-\alpha^3-\alpha^4-\delta^1-\delta^2\Tstrut$  \\
$D_3\Tstrut$ & $\pi_{4*}L_{4,j}/4-\beta^1-\beta^2-\beta^3-\beta^4\Tstrut$  \\
$D_4 \TBstrut$ & $\pi_{4*}L_{4,j}/4-\gamma^1-\gamma^2-\gamma^3-\gamma^4-\delta^1-\delta^2\TBstrut$  \\ \hline
\end{tabular}\end{center}

\begin{proof}[Proof of Proposition \ref{prop:autospazi}]
Consider $L$ any of the ample divisors of $X$ presented in Example \ref{esempiL}, and the corresponding $D_1,\dots , D_4$ as in the tables above: notice that for every $i$ the relation $\pi_4^*(D_i)=L$ is satisfied (since $\pi_4^*M=0$): therefore we always have
$\pi_4^*H^0(\tilde Y, D_i)\subset H^0(X,L)$.
Moreover, $\pi_4^*H^0(\tilde Y, D_i(L))$ is all contained in one of the eigenspaces $V_\bullet(L)$ (the sections of $D_i$ are in fact well defined on the
quotient surface $\tilde Y$), and for $i\neq j$ $\pi_4^*H^0(\tilde Y, D_i(L))$ and $\pi_4^*H^0(\tilde Y, D_j(L))$ are in different eigenspaces, since $D_i(L)$ and $D_j(L)$ intersect differently the exceptional lattice for $i\neq j$.\\
It remains to show that $H^0(X,L)=\bigoplus_{i=1}^4\pi_4^*H^0(\tilde Y, D_i)$: to do this,
it is enough to compute the Euler characteristics $\chi(D_i)=D_i^2/2+2$, and check that \[d+2=\chi(L)=\sum_i\chi(D_i).\] 
The results are displayed in the following table.

\begin{center}
\captionof{table}{Euler characteristics\label{Tab:dim}}
\begin{tabular}{|m{0.1\textwidth}<{\centering}|m{0.03\textwidth}<{\centering}|m{0.07\textwidth}<{\centering}|m{0.15\textwidth}<{\centering}m{0.15\textwidth}<{\centering}m{0.15\textwidth}<{\centering}m{0.15\textwidth}<{\centering}|}
\cline{2-7}
\nocell{1} &no.\TBstrut & $L\TBstrut$ & $\chi(D_1)\TBstrut$  & $\chi(D_2)\TBstrut$ & $\chi(D_3)\TBstrut$ & $\chi(D_4)$\TBstrut \\ 
\hline
$d=_4 1\TBstrut$ &1\Tstrut & $L_0\TBstrut$   & $(d+3)/4\TBstrut$  & $(d+3)/4\TBstrut$ & $(d-1)/4\TBstrut$ & $(d+3)/4$\TBstrut \\
\hline
\multirow{3}{*}{$d=_4 2$\TTstrut} 
&2\Tstrut & $L_0\Tstrut$   & $(d+2)/4\Tstrut$  & $(d+2)/4\Tstrut$ & $(d+2)/4\Tstrut$ & $(d+2)/4$\Tstrut \\
&3\Tstrut & $L^{(i)}_{2,2}\Tstrut$  & $(d+2)/4\Tstrut$  & $(d+2)/4\Tstrut$ & $(d+2)/4\Tstrut$ & $(d+2)/4$\Tstrut \\
&4\Tstrut & $L^{(j)}_{2,2}\TBstrut$  & $(d+6)/4\TBstrut$  & $(d+2)/4\TBstrut$ & $(d-2)/4\TBstrut$ & $(d+2)/4$\TBstrut \\
\hline
\multirow{2}{*}{$d=_4 3$\Tstrut} 
&5\Tstrut & $L_0\Tstrut$   & $(d+1)/4\Tstrut$  & $(d+5)/4\Tstrut$ & $(d+1)/4\Tstrut$ & $(d+1)/4$\Tstrut \\
&6\Tstrut & $L_{2,3}\TBstrut$   & $(d+5)/4\TBstrut$  & $(d+1)/4\TBstrut$ & $(d+1)/4\TBstrut$ & $(d+1)/4$\TBstrut \\
\hline
\multirow{3}{1.5cm}{\centering{$d=_4 0\TTstrut$}} 
&7\Tstrut & $L_0\Tstrut$   & $d/4+1\Tstrut$  & $d/4+1\Tstrut$ & $d/4\Tstrut$ & $d/4$\Tstrut \\
&8\Tstrut & $L_{2,0}\Tstrut$  & $d/4+1\Tstrut$  & $d/4\Tstrut$ & $d/4+1\Tstrut$ & $d/4$\Tstrut \\
&9\TBstrut & $L_{4,j}\TBstrut$ & $d/4+2\TBstrut$  & $d/4\TBstrut$ & $d/4\TBstrut$ & $d/4$\TBstrut \\
\hline
\end{tabular}\end{center}
\end{proof}

\subsection{Eigenspaces of $\tau^{2*}$}

The automorphism $\tau^2$ is a symplectic involution on $X$: the following proposition describes the eigenspaces of a general symplectic involution.
\begin{proposition}[\cite{VGS}, Prop. 2.7]
Let $\iota$ be a symplectic involution on a K3 surface $X$ such that $rk(NS(X))=9$, let $Z=\widetilde{X/\iota}$ the resolution of the quotient surface, and let $\pi:X\dashrightarrow Z$ the induced rational map. Let $L$ be an ample divisor on $X$, such that $L^2=2d$, that generates $\Omega_2^{\perp_{NS(X)}}$. Then $H^0(X, L)\simeq \pi^*H^0(Z, E_1)\oplus \pi^*H^0(Z, E_2)$, with $E_1$, $E_2$ described as follows, for suitable numbering $n_1,\dots , n_8$ of the exceptional curves of $Z$:
\begin{enumerate}
\item if $NS(X)=\Omega_2\oplus\langle L\rangle$, and $d=_2 0$, then $E_1=\pi_*L/2-(n_1+n_2+n_3+n_4)/2$, $E_2=\pi_*L/2-(n_5+n_6+n_7+n_8)/2$;
\item  if $NS(X)=\Omega_2\oplus\langle L\rangle$, and $d=_2 1$, then $E_1=\pi_*L/2-(n_1+n_2)/2$, $E_2=\pi_*L/2-(n_3+n_4+n_5+n_6+n_7+n_8)/2$;
\item if $NS(X)=(\Omega_2\oplus\langle L\rangle)'$ (this case occurs only if $d=_2 0$), then $E_1=\pi_*L/2$, $E_2=\pi_*L/2-\sum_{i=1}^8n_i/2$.
\end{enumerate}
\end{proposition}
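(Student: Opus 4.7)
The plan is to mimic the three-step strategy of Proposition \ref{prop:autospazi} above, simplified to the order-two case where the action of $\iota^*$ splits $H^0(X,L)$ into only two eigenspaces $H^0(X,L)^{\pm}$.

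First, I would check case by case that the proposed classes $E_1,E_2$ are integral in $NS(Z)$. Recall that $NS(Z)$ is an overlattice of $\pi_* NS(X)\oplus N$ glued by specific half-sums of the exceptional curves $n_j$ with elements of $\pi_* NS(X)$, following the general description in Section \ref{sec:resolutionZ}. In case (1) ($d$ even, with $L$ not glued to $\Omega_2$), the class $\pi_*L/2$ is not integral on $Z$, but its image in $A_{\pi_*NS(X)}$ coincides under the gluing isomorphism with the image of a half-sum of exactly four of the $n_j$, producing a canonical partition $\{n_1,\dots,n_4\}\sqcup\{n_5,\dots,n_8\}$; in case (2) ($d$ odd), the analogous computation forces a partition $2+6$; in case (3), where $NS(X)=(\Omega_2\oplus\langle L\rangle)'$, the extra gluing between $L$ and $\Omega_2$ makes $\pi_*L/2$ already integral in $NS(Z)$, and $E_2$ differs from it by the full element $\nu=\sum_{i=1}^8 n_i/2\in N$.

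Next, I would verify $\pi^*E_i=L$. Since $\pi^*n_j=0$ in $H^2(X,\mathbb Z)$ (as $n_j$ is exceptional over an isolated singularity of $X/\iota$) and $\pi^*\pi_*L=L+\iota^*L=2L$ (because $L\in\Omega_2^{\perp_{NS(X)}}$ is $\iota^*$-invariant), every $E_i$ of the stated form satisfies $\pi^*E_i=L$. This yields inclusions $\pi^*H^0(Z,E_i)\hookrightarrow H^0(X,L)$ landing entirely in a single eigenspace of $\iota^*$ (sections of $E_i$ are defined on the quotient $Z$, so their pullbacks are either $\iota^*$-invariant or anti-invariant depending on the parity with which the branch locus enters $E_i$). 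Distinct $E_i$ lie in distinct eigenspaces: the intersection numbers $E_1\cdot n_j$ and $E_2\cdot n_j$ differ on at least one $j$, so $\pi^*s_1$ and $\pi^*s_2$ cannot lie in the same eigenspace for $s_i\in H^0(Z,E_i)$.

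Finally, I would conclude by a dimension count. Using $(\pi_*L)^2=2L^2=4d$, $n_j^2=-2$, and $n_j\cdot n_k=0$ for $j\ne k$, the Euler characteristics $\chi(E_i)=E_i^2/2+2$ compute directly in each of the three cases to satisfy $\chi(E_1)+\chi(E_2)=d+2=\chi(L)$; since $L$ is ample on $X$ and each $E_i$ is big and nef on $Z$ (inherited from the ampleness of $L$), Kodaira vanishing gives $h^0(Z,E_i)=\chi(E_i)$, so $\pi^*H^0(Z,E_1)\oplus\pi^*H^0(Z,E_2)$ already fills $H^0(X,L)$. The main obstacle is the first step: matching each eigenspace to the correct subset of exceptional curves requires carefully following the explicit gluing data between $\pi_*H^2(X,\mathbb Z)$ and the Nikulin lattice $N$ from Section \ref{sec:resolutionZ}, which depends both on the parity of $d$ and on whether $NS(X)$ is $\Omega_2\oplus\langle L\rangle$ or its overlattice.
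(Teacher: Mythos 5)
Your proposal is correct and follows exactly the strategy this paper uses for the order-4 analogue (Proposition \ref{prop:autospazi}) --- integrality of the $E_i$ read off from the gluing data between $\pi_*NS(X)$ and the Nikulin lattice, $\pi^*E_i=L$ with eigenspace separation detected by the intersection numbers $E_i\cdot n_j$, and the Euler-characteristic count $\chi(E_1)+\chi(E_2)=d+2=\chi(L)$ --- which is also the argument of the source \cite{VGS} that the paper cites for this statement without reproving it. The only repair needed is in the last step: the $E_i$ are not always big and nef (in case (2) with $d=1$ one has $E_2^2=-2$), so Kodaira vanishing does not apply uniformly; instead note that $E_i\cdot\pi_*L=2d>0$ forces $h^0(-E_i)=0$, hence $h^2(E_i)=0$ and $h^0(E_i)\geq\chi(E_i)$, which combined with $\sum_i h^0(E_i)\leq h^0(L)=\chi(L)=\sum_i\chi(E_i)$ closes the dimension count.
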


If $X$ admits an automorphism $\tau$ of order 4 and $\tilde Z$ is the minimal resolution of $X/\tau^2$, taking $L$ ample that generates $\Omega_4^{\perp_{NS(X)}}$ we have the following relation between the eigenspaces of $\tau^*$ and $\tau^{2*}$:
\[H^0(X,L)=\bigoplus_{i=1}^4\pi_4^*H^0(\tilde Y, D_i)=\pi_2^*H^0(\tilde Z, E_1)\oplus \pi_2^*H^0(\tilde Z, E_2);\]
the Nef divisors $E_1, E_2\in NS(\tilde Z)$ that satisfy this equality for the examples of ample classes introduced in Example \ref{esempiL} are defined in the following tables, with the exceptional curves numbered as in Sections \ref{sec:resolutionZ}, \ref{Y}:

\begin{tabular}{|m{0.08\textwidth}<{\centering}|m{0.4\textwidth}<{\centering}|m{0.43\textwidth}<{\centering}|}
\hline
$L_0(d) \TBstrut$ &{ $d=_2 0$ \TBstrut }                      &{ $d=_2 1$  \TBstrut} \\  \hline
$E_1\Tstrut $ & $\pi_{2*}L_0/2-(n_5+n_6+n_7+n_8)/2 \Tstrut$ & $\pi_{2*}L_0/2-(n_2+n_3+n_4+n_5+n_6+n_8)/2$\Tstrut  \\
$E_2\TBstrut $ & $\pi_{2*}L_0/2-(n_1+n_2+n_3+n_4)/2\TBstrut$  & $\pi_{2*}L_0/2-(n_1+n_7)/2$\TBstrut  \\
 \hline
\end{tabular}

\begin{tabular}{|m{0.08\textwidth}<{\centering}|m{0.30\textwidth}<{\centering}|}
\hline
$L_{2,0}(h) \TBstrut$ &{ any $h$ \TBstrut }                       \\  \hline
$E_1\Tstrut $ & $\pi_{2*}L_{2,0}/2\Tstrut$ \\
$E_2\TBstrut $ & $\pi_{2*}L_{2,0}/2-\sum_{i=1}^8 n_i/2\TBstrut$ \\
 \hline
\end{tabular}\  
\begin{tabular}{|m{0.08\textwidth}<{\centering}|m{0.4137\textwidth}<{\centering}|}
\hline
$L^{(j)}_{2,2}(h) \TBstrut$ &{ any $h,\ j=1,2$ \TBstrut }                       \\  \hline
$E_1\Tstrut $ & $\pi_{2*}L^{(j)}_{2,2}/2-(n_3+n_4+n_5+n_8)/2 \Tstrut$ \\
$E_2\TBstrut $ & $\pi_{2*}L^{(j)}_{2,2}/2-(n_1+n_2+n_6+n_7)/2\TBstrut$ \\
 \hline
\end{tabular}

\begin{tabular}{|m{0.08\textwidth}<{\centering}|m{0.44\textwidth}<{\centering}|}
\hline
$L_{2,3}(h) \TBstrut$ &{ any $h$ \TBstrut }                       \\  \hline
$E_1\Tstrut $ & $\pi_{2*}L_{2,3}/2-(n_5+n_8)/2 \Tstrut$ \\
$E_2\TBstrut $ & $\pi_{2*}L_{2,3}/2-(n_1+n_2+n_3+n_4+n_6+n_7)/2\TBstrut$ \\
 \hline
\end{tabular}\ 
\begin{tabular}{|m{0.08
\textwidth}<{\centering}|m{0.2737\textwidth}<{\centering}|}
\hline
$L_{4,j}(h) \TBstrut$ &{ any $h,\ j=0,4,8,12$ \TBstrut }                       \\  \hline
$E_1\Tstrut $ & $\pi_{2*}L_{4,j}/2\Tstrut$ \\
$E_2\TBstrut $ & $\pi_{2*}L_{4,j}/2-\sum_{i=1}^8 n_i/2\TBstrut$ \\
 \hline
\end{tabular}

\begin{proposition}
It holds $\pi_2^*H^0(\tilde Z, E_1)=\pi_4^*H^0(\tilde Y, D_1)\oplus \pi_4^*H^0(\tilde Y, D_3)$, while \\$\pi_2^*H^0(\tilde Z, E_2)=\pi_4^*H^0(\tilde Y, D_2)\oplus \pi_4^*H^0(\tilde Y, D_4)$.
\end{proposition}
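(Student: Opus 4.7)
The plan is to exploit the factorisation $\pi_4=\widehat{\pi_2}\circ\pi_2$ of rational maps recorded in the diagram \eqref{diagrammaquozienti}, which at the level of global sections gives $\pi_4^*=\pi_2^*\circ\widehat{\pi_2}^*$. By the Van Geemen--Sarti proposition just recalled, $\pi_2^*H^0(\tilde Z,E_1)$ and $\pi_2^*H^0(\tilde Z,E_2)$ are exactly the two eigenspaces $W_+$ and $W_-$ of $\tau^{2*}$ on $H^0(X,L)$. Since $\tau^*$ commutes with $\tau^{2*}$, each of these subspaces is $\tau^*$-stable and decomposes further into $\tau^*$-eigenspaces; the eigenvalues $\pm 1$ of $\tau^*$ square to $+1$ (so lie inside $W_+$) while $\pm i$ square to $-1$ (so lie inside $W_-$). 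Hence a priori two of the four pieces $\pi_4^*H^0(\tilde Y,D_i)$ land in $\pi_2^*H^0(\tilde Z,E_1)$ and the other two land in $\pi_2^*H^0(\tilde Z,E_2)$.

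To pin down the pairing, I would verify that $\widehat{\pi_2}^*D_1=\widehat{\pi_2}^*D_3=E_1$ and $\widehat{\pi_2}^*D_2=\widehat{\pi_2}^*D_4=E_2$ in $NS(\tilde Z)$, case by case across the nine ample classes listed in Example \ref{esempiL}. The explicit formulas for $\widehat{\pi_2}^*$ from Section \ref{dualmaps} are what makes this feasible: $\widehat{\pi_2}^*$ annihilates the Nikulin lattice $N_Y$ (the components of the exceptional divisors of $\tilde Y$ that come from fixed points of $\widehat\tau$ on $\tilde Z$-resolution curves exchanged pairwise), it doubles $\widehat{\pi_{2*}}n_k$ for $k=1,2,6,7$, and it sends each $\overline{e_i},\overline{a},\overline{\rho},\overline{\chi},\dots$ to the prescribed $\widehat{\pi_{2*}}$-images. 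Since every $D_i$ is of the form $\pi_{4*}L/k-\sum c^i_j$ with $c^i_j\in M^*$, the pullback $\widehat{\pi_2}^*D_i$ rewrites as $\widehat{\pi_2}^*(\pi_{4*}L/k)$ minus contributions from $\widehat{\pi_2}^*M$; using $\widehat{\pi_2}^*\pi_{4*}=2\pi_{2*}$ (which follows by dualising the push-forward factorisation) the first term becomes a multiple of $\pi_{2*}L$, and the $M$-contributions distribute into exactly the exceptional half-sums appearing in the formulas for $E_1,E_2$ given in the tables of the section on eigenspaces of $\tau^{2*}$. The equalities $\widehat{\pi_2}^*D_1=\widehat{\pi_2}^*D_3=E_1$ and $\widehat{\pi_2}^*D_2=\widehat{\pi_2}^*D_4=E_2$ should then be a direct bookkeeping check; the pairing $(D_1,D_3)$ versus $(D_2,D_4)$ matches the fact that these pairs were chosen to differ by classes of the form $\alpha^i-\gamma^i=(m^i_3-m^i_1)/2$, which are precisely the $\widehat\tau$-anti-invariant halves in $M^*$ and hence enter the same $E_j$.

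Given the inclusions $\pi_4^*H^0(\tilde Y,D_1)\oplus\pi_4^*H^0(\tilde Y,D_3)\subseteq\pi_2^*H^0(\tilde Z,E_1)$ and analogously for $E_2$, equality follows from a dimension count: summing the Euler characteristics listed in Table \ref{Tab:dim} one checks in every case that $\chi(D_1)+\chi(D_3)=\chi(E_1)$ and $\chi(D_2)+\chi(D_4)=\chi(E_2)$, with total $\chi(E_1)+\chi(E_2)=d+2=\chi(L)$; ampleness of $L$ (together with nefness and bigness of the $D_i,E_j$, which holds by construction as $L$ is ample and the $D_i,E_j$ are its natural descents) ensures $h^1$ vanishes, so Euler characteristic equals the dimension of global sections.

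The main obstacle is expected to be the case analysis in the second step: the nine families of ample $L$ together with the four divisors $D_i$ in each give a substantial bookkeeping task, and the description of $\widehat{\pi_2}^*$ involves denominators and gluing elements that must be handled carefully. A clean uniform argument, showing for instance that $D_1-D_3$ and $D_2-D_4$ always lie in the sublattice of $NS(\tilde Y)$ sent by $\widehat{\pi_2}^*$ into the kernel of $\pi_2^*$, would substantially shorten the proof, but establishing such a uniform statement is itself nontrivial and is likely to be the most delicate point.
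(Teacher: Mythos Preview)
Your overall strategy is sound, but one concrete step is wrong as written: the equality $\widehat{\pi_2}^*D_1=\widehat{\pi_2}^*D_3=E_1$ (and its analogue for $E_2$) fails in general. For instance, with $L=L_{4,j}$ one finds $\widehat{\pi_2}^*D_1=E_1$ but $\widehat{\pi_2}^*D_3=E_1-(n_1+n_2+n_6+n_7)$: indeed $\widehat{\pi_2}^*$ kills $m^i_1,m^i_3\in N_Y$, yet $\widehat{\pi_2}^*m^i_2=n_k$ (for the matching $k\in\{1,2,6,7\}$), so $\widehat{\pi_2}^*\beta^i=n_k$, not zero. The correct statement is that $E_j-\widehat{\pi_2}^*D_i$ is an \emph{effective} sum of exceptional curves of $\tilde Z$ for $(i,j)\in\{(1,1),(3,1),(2,2),(4,2)\}$. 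This weaker claim is what you actually need: it still yields $\widehat{\pi_2}^*H^0(\tilde Y,D_i)\subseteq H^0(\tilde Z,E_j)$, hence $\pi_4^*H^0(\tilde Y,D_i)\subseteq\pi_2^*H^0(\tilde Z,E_j)$, and then your dimension count forces equality of the sums. The case-by-case verification you describe does establish this effectivity, so once the claim is corrected your proof goes through.

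With that fix, your route is genuinely different from the paper's. The paper never computes $\widehat{\pi_2}^*D_i$. Instead it argues geometrically on $\tilde Z$: for each $n_k$ with $k\in\{1,2,6,7\}$ (the $\hat\tau$-fixed exceptional curves), its image under $\phi_{|E_j|}$ is either a curve carrying two isolated fixed points of $\hat\tau$ (if $E_j\cdot n_k>0$) or a single fixed point (if $E_j\cdot n_k=0$), and in either case one reads off how the resolution curves in $\widetilde{\tilde Z/\hat\tau}\simeq\tilde Y$ meet the two eigenspace summands $F^{(j)}_1,F^{(j)}_2$ of $H^0(\tilde Z,E_j)$; a parallel remark handles the exchanged pairs $n_3,n_4$ and $n_5,n_8$. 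Matching these intersection patterns against the explicit $D_i$'s then identifies which $D$'s come from which $E$. Your approach is more computational and ultimately cleaner once the effectivity statement is formulated correctly; the paper's is more conceptual and sidesteps most of the nine-case bookkeeping, at the price of a less explicit argument.
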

\begin{proof}
It's easy to see that the dimensions agree. 
Moreover, notice that if $E_1$ intersects positively $n_i\in\{n_1,n_2,n_6,n_7\}$ (classes fixed by $\hat\tau^*$), then $\phi_{|E_1|}(n_i)$ is a curve $\mathcal C$ in $\phi_{|E_1|}(\tilde Z)$; consider now the induced automorphism $\hat\tau$ on $\phi_{|E_1|}(\tilde Z)$: it fixes two points $p_1,p_2$ on $\mathcal C$, each belonging to one (or the other) of the eigenspaces for the action of $\hat\tau^*$ on $H^0(\tilde Z, E_1)$, that are
\[H^0(\tilde Z, E_1)=\widehat{\pi_2}^*H^0(\widetilde{\tilde Z/\hat\tau}, F_1)\oplus\widehat{\pi_2}^* H^0(\widetilde{\tilde Z/\hat\tau}, F_2)\]
for some divisors $F_1,F_2$ of $\widetilde{\tilde Z/\hat\tau}$. Therefore, $F_1+F_2$ intersects positively the two curves $\mathcal C_1, \mathcal C_2$, that resolve the singular points image of $p_1,p_2$ in $\tilde Z/\hat\tau$ .
If $E_1$ intersects trivially $n_i$, then $\phi_{|E_1|}(n_i)$ is a point $p$ in $\phi_{|E_1|}\tilde Z$, which is fixed by $\hat\tau$ and thus belongs to an eigenspace: its image in $\tilde Z/\hat\tau$ is resolved by a curve $\mathcal C_p$, that is intersected positively by either $F_1$ or $F_2$, and so by their sum.
A similar argument can be applied for $n_i\in\{n_3,n_4,n_5,n_8\}$, thus proving that how $F^{(i)}_1+F^{(i)}_2$ intersects the exceptional lattice of the resolution of ${\tilde Z/\hat\tau}$ depends on how $E_i$ intersects that of $\tilde Z$.\\
Since the surfaces $\widetilde{\tilde Z/\hat\tau}$ and $\tilde Y$ are isomorphic (see Rem. \ref{YZisomorfi}), we have
\[H^0(X, L)=\bigoplus_{i=1,2}\pi_4^*H^0(\widetilde{\tilde Z/\hat\tau}, F^{(i)}_1)\oplus\pi_4^* H^0(\widetilde{\tilde Z/\hat\tau}, F^{(i)}_2)\]
and a correspondence between each $H^0(\tilde Y, D_j)$ and one of the $H^0(\widetilde{\tilde Z/\hat\tau}, F^{(i)}_k)$. \\
The fact that this correspondence is exactly as stated comes from a comparison between how the $E_i$ and the $D_j$ intersect the exceptional lattices of $\tilde Z$ and $\tilde Y$ respectively. 
\end{proof}

\subsection{Examples in $\mathbb P^3$} 

There are three families of K3 surfaces polarized with an ample class $L$ such that $L^2=4$, corresponding to \textbf{no. 2}, \textbf{no. 3}, \textbf{no. 4} of Table \ref{Tab:dim}: since it holds $\chi(D_i)=h^0(D_i)$, as we proved in Proposition \ref{prop:autospazi}, we can read from Table \ref{Tab:dim} the dimension of the eigenspaces of the action induced by $\tau$. The first two cases give eigenspaces of the same dimension but, as we will see below, \textbf{no. 2} provides a model of $X$ as smooth quartic in $\mathbb P^3$, while \textbf{no. 3} as double cover of $\mathbb P^1\times\mathbb P^1$. Recall from Definition \ref{def:latticepolarized} that the projective dimension of each of the families of $X$ is ${5=20-(rk(\Omega_4)+1)}$.

\textbf{no. 2}: The divisor $L_0(2)$ has square 4, and in $NS(X)=\Omega_4\oplus\mathbb ZL$ there exists no class $E$ such that $E^2=0$ and $EL_0(2)=2$: therefore (see \cite{SaintDonat}, Thm. 5.2), $\phi_{|L_0(2)|}: X \hookrightarrow\mathbb P^3$ is an embedding of $X$ in $\mathbb P^3$ as a quartic surface. Consider the automorphism of $\mathbb P^3$:
\[\psi_2: (x_0:x_1:x_2:x_3)\mapsto (x_0:ix_1:-x_2:-ix_3);\]
 quartic surfaces of the form
\[Q_2:\ x_0^3x_2+x_0^2(x_1^2+x_3^2)+x_0(x_2^3+\delta x_1x_2x_3)+x_2^2(\varepsilon x_1^2+\zeta x_3^2)+\eta x_1^3x_3+\theta x_1x_3^3=0\]
are invariant under the action of $\psi_2$, and they depend on 5 projective parameters up to projectivities of the form ${(x_0:x_1:x_2:x_3)\mapsto (x_0:ax_1:bx_2:cx_3)}$, which commute with $\psi_2$. Moreover, $Q_2$ contains exactly 8 points fixed by $\psi_2^2$, of wich 4 are fixed also by $\psi_2$; we have therefore \[\phi_{|L_0(2)|}: X \xrightarrow{\simeq} Q_2\subset\mathbb P^3.\]
To find models for the quotient surfaces, as in (\cite{VGS}, \S 3.4) we consider the map given by the degree 2 invariants under the action of $\psi_2^2$
\[(x_0:x_1:x_2:x_3)\mapsto(x_0^2:x_1^2:x_2^2:x_3^2:x_0x_2:x_1x_3)=(z_0:z_1:z_2:z_3:z_4:z_5);\]
then the surface $Q_2$ maps to the complete intersection of quadrics in $\mathbb P^5$
\begin{equation*} R_2:
    \begin{cases}
      z_0z_2-z_4^2=0\\
      z_5^2-z_1z_3=0\\
	 z_0z_4+z_0(\alpha z_1+\beta z_3)+z_4(\gamma z_2+\delta z_5)+z_2(\varepsilon z_1+\zeta z_3)+z_5(\eta z_1+\theta z_3)=0
    \end{cases}
\end{equation*}
which is a projective model for $Q_2/\psi_2^2$. Since $\hat L_0(2)=\pi_{2*}L_0(2)$ has self-intersection 8, it holds
\[\phi_{|\hat L_0(2)|}:Z\xrightarrow{\simeq} R_2\subset\mathbb P^5.\]
Now, the automorphism induced by $\psi_2$ on $\mathbb P^5$ is
\[\hat\psi_2:(z_0:z_1:z_2:z_3:z_4:z_5)\mapsto(z_0:-z_1:z_2:-z_3:-z_4:z_5):\]
since the surface $R_2$ has the same form as in (\cite{VGS}, \S 3.6), then the quotient of $R_2$ under the action of $\hat\psi_2$ is described by a complete intersection in $\mathbb P^2_{(z_0:z_2:z_5)}\times\mathbb P^2_{(z_1:z_3:z_4)}$ of two polynomials of bidegree respectively $(2,2)$, $(1,1)$, that is
\begin{equation*} S_2:
    \begin{cases}
      z_0z_2z_1z_3-z_4^2z_5^2=0\\
	 z_0z_4+z_0(\alpha z_1+\beta z_3)+z_4(\gamma z_2+\delta z_5)+z_2(\varepsilon z_1+\zeta z_3)+z_5(\eta z_1+\theta z_3)=0.
    \end{cases}
\end{equation*}

\textbf{no. 3}: The eigenspaces associated to the action of $\tau$ on $H^0(X,L^{(1)}_{2,2}(0))$ have all the same dimension as in the previous case. However, as in (\cite{VGS}, \S 3.5), the divisor $L^{(1)}_{2,2}(0)$ is ample but not very ample: indeed (see \cite{SaintDonat}, Thm. 5.2), we have $L^{(1)}_{2,2}(0)=H_1+H_2$ with
\[H_1=\frac{L_0(0)+\rho+\sigma}{2}, \ H_2=\frac{L_0(0)+\rho-\sigma}{2};\quad\langle H_1, H_2\rangle=\begin{bmatrix}0 & 2\\ 2 & 0\end{bmatrix};\quad \tau^*(H_1)=H_2.\]
Hence
\[\phi_{|L^{(1)}_{2,2}(0)|}=\phi_{|H_1+H_2|}: X\xrightarrow{2:1} \mathbb P^1\times\mathbb P^1\]
is a double cover ramified along a curve of bidegree $(4,4)$ invariant for the automorphism of $\mathbb P^1\times \mathbb P^1$
\[\overline\psi_3: (x_0:x_1)(y_0:y_1)\mapsto(y_0:iy_1)(x_0:ix_1);\]
curves of this type depend on 5 projective parameters when taking into account the action of the group of projectivities of the form ${(x_0:x_1)(y_0:y_1)}\mapsto {(x_0:ax_1)(y_0:ay_1)}$, which are the only ones that commute with $\overline\psi_3$. We embed $\mathbb P^1\times \mathbb P^1$ in $\mathbb P^3$ via the Segre map
\[(x_0:x_1)(y_0:y_1)\mapsto(x_0y_0:x_0y_1:x_1y_0:x_1y_1)=(z_0:z_1:z_2:z_3):\]
now $X$ is a double cover of the quadric surface $z_0z_3=z_1z_2$ ramified along a curve of degree $4$ invariant for the automorphism $\psi_3$ of $\mathbb P^3$ induced by $\overline\psi_3$ via the Segre map,
\[\psi_3: (z_0:z_1:z_2:z_3)\mapsto (z_0:iz_2:iz_1:-z_3).\]
Hence, $X$ is described in $\mathbb P(2,1,1,1,1)$ by
\begin{equation*} Q_3:
    \begin{cases}
      z_0z_3=z_1z_2\\
      w^2=\alpha z_0^4+\beta z_0^2z_3^2+\gamma z_3^4+z_0z_3(\delta z_1^2+\varepsilon z_1z_2+\delta z_2^2)+\zeta z_1^4+\eta  z_1^2z_2^2+\zeta z_2^4;
    \end{cases}
\end{equation*}
notice that the fixed locus of $\psi_3$ on $\mathbb P^3$ is $\{(1:0:0:0),(0:0:0:1),(0:1:1:0)\}$: only the first two of these points belong to the branch curve, so to have 4 points fixed by $\psi_3$ on $Q_3$, the action induced by $\psi_3$ on $\mathbb P(2,1,1,1,1)$ has to be 
\[(w;z_0:z_1:z_2:z_3)\mapsto (w;z_0:iz_2:iz_1:-z_3).\]

To find a projective model of the quotient surface $Z$, we consider the degree $2$ invariants for the action of $\psi_3^2$, that form a projective space of dimension 6:
\[(w;z_0:z_1:z_2:z_3)\mapsto(w:z_0^2:z_1^2:z_2^2:z_3^2:z_0z_3:z_1z_2)=(w:a_0:a_1:a_2:a_3:a_4:a_5);\]
the surface $Q_3$ maps to
\begin{equation*} R_3:
    \begin{cases}
      a_0a_3=a_4^2\\
      a_1a_2=a_5^2\\
      a_4=a_5\\
      w^2=\alpha a_0^2+\beta a_0a_3+\gamma a_3^2+z_0z_3(\delta a_1+\varepsilon a_5+\delta a_2)+\zeta a_1^2+\eta  a_1a_2+\zeta a_2^2,
    \end{cases}
\end{equation*}
the complete intersection of 3 quadrics in the hyperplane defined by $a_4=a_5$ in $\mathbb P^6$.\\
Eliminate $a_5$, and change coordinates to
\[b_0=a_0,\quad b_1=a_1+a_2,\quad b_2=a_1-a_2,\quad b_3=a_3,\quad b_4=a_4;\]
then the automorphism induced by $\psi_3$ on $\mathbb P^5$ is
\[\hat\psi_3: (w:b_0:b_1:b_2:b_3:b_4)\mapsto (w:b_0:-b_1:b_2:b_3:-b_4),\]
and in the new coordinates we can write $R_3$ as
\begin{equation*} R_3:
    \begin{cases}
      b_1^2=b_0b_3\eqqcolon\rho_1(w,b_0,b_2,b_3)\\
      b_1b_4=b_2^2+4b_0b_3\eqqcolon\rho_2(w,b_0,b_2,b_3)\\
      b_4^2\eqqcolon\rho_3(w,b_0,b_2,b_3),
    \end{cases}
\end{equation*}
where $\rho_1,\rho_2,\rho_3$ are quadrics, similarly to (\cite{VGS}, \S 3.7): the quotient surface $S_3=R_3/\hat\psi_3$ is therefore the quartic surface $\rho_1\rho_3-\rho_2^2=0$ in $\mathbb P^3$.

\textbf{no. 4}: The eigenspaces associated to the action of $\tau$ on $H^0(X,L^{(2)}_{2,2}(1))$ have dimensions $2,1,0,1$: consider the automorphism of $\mathbb P^3$:
\[\psi_4: (x_0:x_1:x_2:x_3)\mapsto (x_0:x_1:ix_2:-ix_3);\]
quartic surfaces of the form
\[Q_4: f_4(x_0,x_1)+x_2x_3f_2(x_0,x_1)+\alpha x_2^4+\beta x_2^2x_3^2+\gamma x_3^4=0,\]
where $f_4, f_2$ are respectively homogeneous quartic and quadric polynomials, are invariant under the action of $\psi_4$, and they form a family of projective dimension 5 when taking into account the action of the group of projectivities of the form ${(x_0:x_1:x_2:x_3)\mapsto} (ax_0+bx_1:cx_0+dx_1:ex_2:x_3)$, that commute with $\psi_4$. Moreover, $Q_4$ contains exactly 4 points fixed by $\psi_4$, and 4 more fixed only by $\psi_4^2$. Therefore $L^{(2)}_{2,2}(1)$ is very ample, and
 \[\phi_{|L^{(2)}_{2,2}(1)|}: X \xrightarrow{\simeq} Q_4\subset\mathbb P^3.\]
Proceeding as in \textbf{no. 2}, we consider the map given by the degree 2 invariants under the action of $\psi_4^2$
\[(x_0:x_1:x_2:x_3)\mapsto(x_0^2:x_1^2:x_2^2:x_3^2:x_0x_1:x_2x_3)=(z_0:z_1:z_2:z_3:z_4:z_5);\]
then the quotient $Q_4/\psi^2|_{Q_4}$ is the complete intersection of quadrics in $\mathbb P^5$
\begin{equation*} R_4:
    \begin{cases}
      \rho_1: z_0z_1-z_4^2=0\\
      \rho_2: z_5^2-z_2z_3=0\\
	 \rho_3: \tilde f_4(z_0,z_1,z_4)+z_5\tilde f_2(z_0,z_1,z_4)+\alpha z_2^2+\beta z_5^2+\gamma z_3^2=0,
    \end{cases}
\end{equation*}
where $\tilde f_4,\tilde f_2$ are respectively homogeneous quadric and linear polynomials such that $\tilde f_4(x_0^2,x_1^2,x_0x_1)=f_4(x_0,x_1)$ (similarly $\tilde f_2$ and $f_2$).
The automorphism induced by $\psi_4$ on $\mathbb P^5$ is
\[\hat\psi_4:(z_0:z_1:z_2:z_3:z_4:z_5)\mapsto(z_0:z_1:-z_2:-z_3:z_4:z_5).\]
The surface $R_4$ is singular in 8 points, 4 obtained as $R_4\cap\{z_0=z_1=z_4=0\}$, which are not fixed by $\hat\psi_4$, and 4 obtained as $R_4\cap\{z_2=z_3=z_5=0\}$, which are fixed by $\hat\psi_4$.\\ 
To find the quotient $R_4/\hat\psi_4|_{R_4}$, we can consider the projection $\mathbb P^5\rightarrow\mathbb P^3$
\[\pi:(z_0:z_1:z_2:z_3:z_4:z_5)\mapsto(z_0:z_1:z_4:z_5)\]
from the line $\ell=(0:0:s:t:0:0)$. Notice that $\pi(\rho_1)=\rho_1$, and that for every $(\overline z_0:\overline z_1:\overline z_4:\overline z_5)\in\mathbb P^3$ we can compute its pre-image as
\begin{equation*}
    \begin{cases}
	 \overline z_5^2=st\\
	 \tilde f_4(\overline z_0,\overline z_1,\overline z_4)+\overline z_5\tilde f_2(\overline z_0,\overline z_1,\overline z_4)+\alpha s^2+\beta st+\gamma t^2=0;
    \end{cases}
\end{equation*}
setting $B=\beta\overline z_5^2+\overline z_5\tilde f_2+\tilde f_4$, this gives
\begin{equation*}
    \begin{cases}
	 s=\overline z_5^2/t\\
	 t^2=\frac{-B\pm\sqrt{B^2-4\alpha\gamma\overline z_5^4}}{2\gamma}.
    \end{cases}
\end{equation*}
There are generally 4 solutions, pairwise identified by the action of $\hat\psi_4$: we can therefore define a surface $S_4$ that completes the diagram
\[\xymatrix{
R_4 \ar[rd]_{2:1}^(.57){/\hat\psi_4} \ar[rr]^{4:1}_{\pi} &\ &{\rho_1} \\
\ &S_4 \ar[ru]_{2:1} &\ 
}\]
that is, the quotient $S_4=R_4/\hat\psi_4|_{R_4}$ is a double cover of the quadric $\rho_1\subset\mathbb P^3$ ramified over the curve defined by $B^2-4\alpha\gamma\overline z_5^4=0$, and thus is a K3 surface.

\end{document}